\numberwithin{equation}{section}
\newcommand{\algma}{\mathcal{U}}
\newcommand{\Hom}{\text{Hom}}
\newcommand{\bfs}{Without loss of generality we can assume}
\newcommand{\spc}{\mathbb{C}}
\newcommand{\spr}{\mathbb{R}}
\newcommand{\spo}{\mathbb{O}}
\newcommand\huab[2]{\mathscr{B}_{\mathcal{RO}}(#1,#2)}
\newcommand\huabr[2]{\mathscr{B}_{\spr}(#1,#2)}
\newcommand{\re}{\text{Re}\,}%
\newcommand{\im}{\text{Im}\,}
\newcommand\fx[2]{\left<#1,#2\right>}%
\newcommand\fsh[1]{\|#1\|}
\def\S{\mathbb{S}}
\def\O{\mathbb{O}}
\def\R{\mathbb{R}}
\def\abs#1{\left|#1\right|}
\newtheorem{mydef}{Definition}[section]
\newtheorem{rem}[mydef]{Remark}
\newtheorem{eg}[mydef]{Example}
\newtheorem{cor}[mydef]{Corollary}
\newtheorem{prop}[mydef]{Proposition}
\newtheorem{lemma}[mydef]{Lemma}
\newtheorem{thm}[mydef]{Theorem}
\newtheorem{step }[stp]{Step }
\begin{document}
	\title[Octonionic self-adjoint operators]{Octonionic Para-linear Self-Adjoint Operators and Spectral Decomposition}	
	\author{Qinghai Huo}
	
	\email[Q.~Huo]{hqh86@mail.ustc.edu.cn}
	\address{Department of Mathematics, Hefei University of Technology, Hefei 230009, China}
	
	\author{Guangbin Ren}
	\email[G.~Ren]{rengb@ustc.edu.cn}
	\address{Department of Mathematics, University of Science and Technology of China, Hefei 230026, China}

	\author{Irene Sabadini}

	\email[I.~Sabadini]{irene.sabadini@polimi.it}
	\address{Politecnico di Milano, Dipartimento di Matematica, Via E. Bonardi 9, 20133 Milano, Italy}

	\date{}
	\keywords{Octonionic Hilbert space;
		para-linear operator;  self-adjoint operator; spectral decomposition; non-associative functional analysis.}
	\subjclass[2020]{Primary: 17A35 {46S10}; Secondary  47B37, 47A70}

	\thanks{This work was supported by the National Natural Science Foundation of China (Nos. 12571090, 12301097) and the Fundamental Research Funds for the Central Universities (No. 	JZ2025HGTB0171). The third author is partially supported by PRIN 2022 {\em Real and Complex Manifolds: Geometry and Holomorphic Dynamics} and is member of GNASAGA of INdAM}
	

	\begin{abstract}
	This paper presents a groundbreaking advancement in the theory of operators defined on octonionic Hilbert spaces, successfully resolving a fundamental challenge that has persisted for over six decades. Due to the  {intrinsic} non-associative nature of octonions, conventional linear operator theory encounters profound structural difficulties. We  {make use of an original} conceptual framework termed ``para-linearity", an innovative generalization of linearity that naturally accommodates the octonionic algebraic structure.
	
	Within this newly established paradigm, we systematically develop an appropriate algebraic setting by defining a carefully designed operator algebra and an  adjoint operation which, together, recapture essential analytic properties previously inaccessible in this context.  We identify a geometric structure—the ``slice cone"—as the fundamental object encoding spectral properties typically derived through sesquilinear forms.
	
	We obtain a rigorous characterization of self-adjointness which indicates how to  introduce a new notion of strong eigenvalues. For every compact, para-linear, self-adjoint operator with strong eigenvalues,  we can establish the spectral decomposition theorem and functional calculi.

\end{abstract}

	\maketitle
	
	\tableofcontents

\section{Introduction}

The octonion algebra $\mathbb{O}$, as the largest normed division algebra, occupies a singular position at the {crossroad} of algebra, geometry, and theoretical physics. Its non-associative structure underpins {very important} phenomena across diverse domains: geometric analysis, quantum mechanics, and   mathematical physics   \cite{MR4855317,MR4782804,MR4771382,baez2002octonions,bryant2003some, Duff1999world,Furey2022standardmod}.  Despite this ubiquity, a coherent spectral theory for operators on octonionic Hilbert spaces has remained  {an open problem} since {1964, when} Goldstine and Horwitz's {published their} pioneering work \cite{goldstine1964hilbert}, due to the fundamental incompatibility of associative functional-analytic tools with octonionic non-associativity.

\subsection{\bf Motivations {from Physics}:} Algebraic formulations of quantum theory are fundamentally rooted in $C^*$-algebras and their representations, {see e.g.} \cite{Moretti2019quantum}. Standard quantum mechanics assumes a complex Hilbert space, whose algebraic structure reflects specific physical symmetries. To address questions beyond the Standard Model---such as quantum gravity or true unification---physicists increasingly explore broader normed division algebras. The octonions $\mathbb{O}$ naturally arise:

\begin{quote}
``Since the heart of quantum theory is a $C^*$-algebra and its representations, it is natural to ask what new physics might be hidden in other normed division algebras. The octonions, being the largest and most exceptional, may encode symmetries lying beyond the Standard Model'' \cite{Moretti2019quantum}.
\end{quote}
Physically, non-associativity mirrors the non-locality expected in quantum gravity, while $\mathbb{O}$'s exceptional symmetries ($\text{Aut}(\mathbb{O}) = G_2 \subset \text{SO}(7)$)
may be a natural framework for Grand Unified Theories.  Key manifestations include:
\begin{itemize}
\item \textit{10D superstring backgrounds:} Spacetime factorizes as $\mathbb{R}^{1,3} \times \mathbb{O}$, with octonionic directions describing six compactified dimensions \cite{Duff1999world}.
\item \textit{Exceptional Lie algebras:} $E_8$ is constructed from the Jordan algebra $\mathrm{J}_3(\mathbb{O})$, suggesting $\mathbb{O}$ as the natural arena for maximal unified gauge symmetries \cite{baez2002octonions}.
\item \textit{Standard Model extensions:} Octonionic structures accommodate exactly 72 fermion states---matching the three generations of the Standard Model \cite{Furey2022standardmod}.
\end{itemize}

 \subsection{\bf {Mathematical} Obstacles:} Extending quantum mechanics to octonionic Hilbert spaces faces three core obstructions:
\begin{enumerate}
\item \textit{Spectral theory:} The lack of associativity prevents projection-valued measures, obstructing the spectral theorem.
\item \textit{Probability interpretation:} Gleason's theorem fails as non-associativity undermines additive probability measures on the projection lattice.
\item \textit{Canonical commutation relations:} Operator ordering ambiguities render consistent formulations of the {classical commutation relation $[x,p] = i\hbar$ between the position operator $x$ and the momentum operator $p$}  highly nontrivial.
\end{enumerate}
Subsequent workarounds---Jordan algebras \cite{Baez2012FP}, twisted tensor models \cite{Dixon1994division}, geometric entanglement \cite{gunaydin2013FP}---either restricted to associative subalgebras or embedded $\mathbb{O}$ into larger structures. A unified spectral framework remained open.

 \subsection{\bf Para-Linearity:  A Geometric Resolution.} We resolve these challenges through \emph{para-linearity}, a geometrically motivated non-associative analog of linearity. In the theory of octonionic Hilbert spaces, para-linear rather than octonionic-linear operators are the natural objects, as demonstrated by the octonionic Riesz representation theorem  \cite{huoqinghai2022Riesz}.
  For a map $T : H_1 \to H_2$ between $\mathbb{O}$-modules, this is characterized by:
\begin{equation} \label{eq:paralinear}
\operatorname{Re} \left( T(x)p - T(xp) \right) = 0, \quad \forall x \in H_1,  p \in \mathbb{O}.
\end{equation}
This condition preserves the geometric essence of linearity while respecting octonionic non-associativity.
Here \(\operatorname{Re}\) denotes the real part in an \(\mathbb{O}\)-module \cite{huoqinghai2020nonass}. This condition retains the geometric core of linearity while accommodating octonionic non-associativity. Notably, when the octonion algebra \(\mathbb{O}\) is replaced by the quaternion algebra \(\mathbb{H}\), para-linearity reduces to standard quaternionic linearity \cite{huoqinghai2022Riesz}. Within this framework, we recover several cornerstone results:

\begin{itemize}
\item Riesz representation theorem \cite{huoqinghai2022Riesz},
\item Parseval theorem \cite{huoqinghai2021tensor},
\item Hahn-Banach theorem \cite{huo2025BLMSHB}.
\end{itemize}
The concept of para-linearity also finds applications in octonionic function theory, as demonstrated in works such as \cite{Colombo2023OctonionicMA,MR4935003,Krausshar2022discoct,Krauhar2023WeylCP}.

\subsection{Principal Contributions.} Within this framework, {in this paper} we establish the first complete spectral theory for self-adjoint operators in octonionic Hilbert spaces. Our main results are:
\begin{enumerate}
\item

\textit{Involutive operator algebra.} The space \(\mathscr{B}_{\mathcal{R}\mathcal{O}}(H)\) of bounded para-linear operators on an octonionic bimodule \(H\) is an octonionic involutive Banach algebra with respect to the regular composition \(\circledcirc\) defined in Definition~\ref{def:regular_composition}. The involution is given by the corrected adjoint \(T \mapsto T^{*}\), characterized by
\[
\langle x, T^{*} y \rangle \;=\; \langle T x, y \rangle \;+\; [y,T,x],
\]
for all \(x,y \in H\). These properties are established in Theorems~\ref{thm:OBanach}, \ref{thm:norm_estimate}, and \ref{thm:involutive_algebra}.

\item \textit{Geometric self-adjointness:} A para-linear operator $T$ is self-adjoint if and only if $\langle Tx, x \rangle \in \mathbb{R}$ for all $x$ in the \emph{slice cone} $\mathbb{C}(H) := \bigcup_{J \in \mathbb{S}} (\operatorname{Re} H + J \operatorname{Re} H)$, where $\mathbb{S} = \{J \in \operatorname{Im} \mathbb{O} : |J| = 1\}$  {(Theorem  \ref{thm:slice_cone_char})}.  This is established via a sharp octonionic polarization identity (Theorem \ref{thm:polarization}).

\item \textit{Spectral decomposition:} Every compact self-adjoint para-linear operator with \emph{standard strong eigenvalues} admits a complete spectral decomposition:
\begin{equation} \label{eq:spectral}
T = \sum_{k=1}^{\infty} \lambda_k P_{z_k},
\end{equation}
where $\lambda_k \in \mathbb{R}$, $z_k \in \mathbb{C}(H)$ are eigenvectors, and $P_{z_k}(x) = z_k \langle z_k, x \rangle$ are para-linear projections (Theorem  \ref{thm:hilbert-schmidt}). Eigenvalues are real (Corollary \ref{StrongEigen}), and eigenvectors are orthogonal with $B_p(z_i, z_j) = 0$ (Theorem \ref{thm:eigen-orthogonality}).

\item \textit{Functional calculus:} For real-analytic functions $f$, we define decomposition-independent functional calculi:
\begin{equation*}
\Phi(f) = \sum_k P_{z_k} \odot f(\lambda_k), \quad \Psi(f) = \sum_k f(\lambda_k) \odot P_{z_k},
\end{equation*}
satisfying $\Phi(q^k) = T^{\circledcirc k}$, $\operatorname{Re} \Phi(f  g) = \operatorname{Re} (\Phi(f) \circledcirc \Phi(g))$ for $f \in \operatorname{Re} \mathcal{R}$, and $\|\Phi(f)\| \leq 8 \|f\|_\infty$ (Theorems \ref{thm:functional_calculus_properties}, \ref{thm:independence}).
\end{enumerate}

\subsection{Impact and Applications.}

This work transforms non-associativity from an obstruction into a structured feature:
\begin{itemize}
\item It resolves problems open since Goldstine--Horwitz (1964) and provides foundations for non-associative functional analysis.
\item The slice cone $\mathbb{C}(H)$ offers a universal geometric domain for spectral analysis across non-associative settings.
\item Applications include $G_2$-manifold analysis, {string compactifications}, and non-associative quantum mechanics.
\end{itemize}
In summary, para-linearity establishes the definitive operator theory for octonionic Hilbert spaces, enabling spectral analysis where associative methods fail and opening pathways in  non-associative functional analysis.

\section{Algebraic Foundations of Para-Linear Operators}
\label{sec:algebraic_foundations}

This section establishes the algebraic framework for para-linear operators in octonionic Hilbert spaces. We begin by recalling essential properties of the octonion algebra, then define module structures and para-linearity, and {finally} Hilbert $\mathbb{O}$-bimodules and the Riesz representation theorem.

\subsection{Octonionic algebra}
The octonions $\mathbb{O}$ form an $8$-dimensional normed division algebra over $\mathbb{R}$ with basis $\{e_0=1, e_1, \dots, e_7\}$ {satisfying the following}\\
{\textbf{Fano Plane Multiplication:} Basis elements multiply via cyclic rules encoded in the Fano plane:}
    \[
    e_i e_j = \epsilon_{ijk} e_k - \delta_{ij}, \quad i,j,k \geq 1,
    \]
    where $\epsilon_{ijk} = 1$ for oriented triples $(123)$, $(145)$, $(176)$, $(246)$, $(257)$, $(347)$, $(365)$.
    	\

    \noindent \small{\textbf{Fig.1} Fano mnemonic graph}
    \begin{flushright}
    	\centerline{\includegraphics[width=3cm]{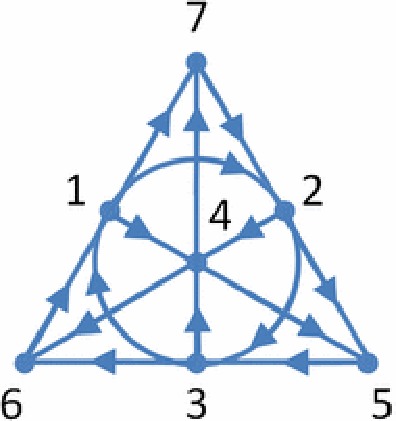}}
    \end{flushright}
    \normalsize

  Key properties include:

\begin{enumerate}
    \item \textit{Conjugation and Norm:} For $x = x_0 + \sum_{i=1}^7 x_i e_i$,
    \[
    \overline{x} := x_0 - \sum_{i=1}^7 x_i e_i, \quad |x| := \sqrt{x\overline{x}}.
    \]

    \item \textit{Non-Associative Structure:} The associator $[x,y,z] := (xy)z - x(yz)$ is fully antisymmetric. The commutator $[x,y] = xy - yx$ measures non-commutativity.

    \item \textit{Moufang Identities:} For all $x,y,z \in \mathbb{O}$:
    \[
    (xyx)z = x(y(xz)), \quad
    z(xyx) = ((zx)y)x, \quad
    x(yz)x = (xy)(zx).
    \]

    \item \textit{Five-Terms Identity:} For all $x,y,z,w \in \mathbb{O}$:
    \begin{equation}
    \label{eq:five_term}
    x[y,z,w] + [x,y,z]w = [xy,z,w] - [x,yz,w] + [x,y,zw].
    \end{equation}

    \
\end{enumerate}

\subsection{Octonionic Module Theory}
We now define {a} module structure compatible with octonionic non-associativity.

\begin{mydef}
An $\mathbb{R}$-vector space $M$ is a \textit{right $\mathbb{O}$-module} if {it is} equipped with a right scalar multiplication satisfying the alternativity  of right associators:
\[
[x,p,q] := (xp)q - x(pq) = -[x,q,p], \quad \forall x \in M, \, p,q \in \mathbb{O}.
\]
Similar definition for {a} \textbf{left $\mathbb{O}$-module}.
\end{mydef}

\begin{mydef}
A right $\mathbb{O}$-module $M$ is an \textit{$\mathbb{O}$-bimodule} if  there is a left   multiplication satisfying
\[
[p,q,x] = [q,x,p] = [x,p,q] = -[q,p,x], \quad \forall p,q \in \mathbb{O}, \, x \in M.
\]
Here the left associators and middle associators are defined canonically.
\end{mydef}

The regular bimodule $\operatorname{Reg}\mathbb{O}$,  {which  as a set is just the $\O$, and scalar multiplications wherein are just defined by the product in $\O$, }is the unique irreducible $\mathbb{O}$-bimodule \cite{Springer2000jordanalg}.  This enables the definition of the real part of an $\mathbb{O}$-bimodule \cite{huoqinghai2020nonass}:
\begin{prop}[Real Part Operator]
\label{prop:real_part}
For an $\mathbb{O}$-bimodule $M$, the \textbf{real part} $\operatorname{Re} M$ is defined as
\[
\operatorname{Re} M := \{m \in M : [p,q,m] = 0,  \ \forall p,q \in \mathbb{O}\}.
\]
The projection $\operatorname{Re}: M \to \operatorname{Re} M$ satisfies:
\begin{enumerate}
    \item $\operatorname{Re} M = \mathscr{Z}(M) := \{m \in M : pm = mp,  \ \forall p \in \mathbb{O}\}$.
    \item $M \cong \operatorname{Re} M \otimes_{\mathbb{R}} \mathbb{O}$ as $\mathbb{O}$-bimodules.
    \item $\operatorname{Re}$ is given explicitly by
    \begin{equation}
    \label{eq:real_part_formula}
    \operatorname{Re} {m = \frac{5}{12}m - \frac{1}{12} \sum_{i=1}^7 e_i m e_i, \ \forall m \in M .}
    \end{equation}
\end{enumerate}
\end{prop}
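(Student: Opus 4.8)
The plan is to reduce every assertion to a computation in the regular bimodule $\operatorname{Reg}\mathbb{O}$ and then transport the outcome along an explicit decomposition of $M$. Since $\mathbb{O}$ is a finite-dimensional separable alternative algebra, its bimodules are completely reducible, and by the cited fact that $\operatorname{Reg}\mathbb{O}$ is the unique irreducible $\mathbb{O}$-bimodule \cite{Springer2000jordanalg}, $M$ splits as a direct sum of copies of $\operatorname{Reg}\mathbb{O}$. Collecting the multiplicities into a real vector space $V$, I would record this as a bimodule isomorphism $\Theta\colon V \otimes_{\mathbb{R}}\mathbb{O} \xrightarrow{\ \sim\ } M$, in which $\mathbb{O}$ acts solely on the right tensor factor by left and right multiplication. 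All three claims then become statements about $V\otimes_{\mathbb{R}}\mathbb{O}$, where the octonionic action is fully explicit.

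For parts (1) and (2) the key observation is that both the defining condition of $\operatorname{Re} M$ and the centrality condition of $\mathscr{Z}(M)$ localize to the second tensor factor: for a simple tensor one computes $[p,r,v\otimes q] = v\otimes[p,r,q]$ and $p(v\otimes q)-(v\otimes q)p = v\otimes(pq-qp)$. Thus $v\otimes q$ lies in $\operatorname{Re} M$ (resp.\ $\mathscr{Z}(M)$) exactly when $q$ lies in the nucleus (resp.\ the center) of $\mathbb{O}$; and because the octonionic associator is alternating, the single requirement $[p,r,q]=0$ for all $p,r$ already forces $q$ into the nucleus. Since for $\mathbb{O}$ the nucleus and the center both coincide with $\mathbb{R}$, I obtain $\operatorname{Re} M = \mathscr{Z}(M) = \Theta(V\otimes\mathbb{R})$, which proves (1) and simultaneously identifies $\operatorname{Re} M$ with $V$, yielding the isomorphism $M\cong\operatorname{Re} M\otimes_{\mathbb{R}}\mathbb{O}$ of (2).

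Finally, part (3) reduces to the scalar identities $\sum_{i=1}^{7} e_i\,1\,e_i = -7$ and $\sum_{i=1}^{7} e_i e_j e_i = 5e_j$ for $1\le j\le 7$. The first is immediate from $e_i^2=-1$; for the second, distinct imaginary units anticommute and generate an associative quaternionic subalgebra, so $e_i e_j e_i = e_j$ when $i\ne j$ while $e_j e_j e_j = -e_j$, and the seven terms sum to $6e_j - e_j = 5e_j$. Substituting into $\tfrac{5}{12}m - \tfrac{1}{12}\sum_i e_i m e_i$ gives $1$ on the real unit and $0$ on every $e_j$, so this operator is exactly the $\mathbb{R}$-linear projection onto $\mathbb{R}\cdot 1$ in $\operatorname{Reg}\mathbb{O}$; transporting through $\Theta$, under which $e_i m e_i$ acts slotwise on the $\mathbb{O}$-factor, shows it agrees with $\operatorname{Re}$ on all of $M$. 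Throughout, one must note that the triple product $e_i m e_i$ is a priori ambiguous but is in fact well-defined because alternativity forces $[e_i,m,e_i]=0$. The main obstacle is not any of these computations but the justification of the reduction step itself, namely complete reducibility of $\mathbb{O}$-bimodules and uniqueness of the irreducible bimodule, where the genuinely non-associative representation theory enters; everything downstream is elementary octonionic arithmetic.
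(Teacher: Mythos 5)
Your argument is essentially correct, but note that the paper itself gives no proof of this proposition: it is recalled from \cite{huoqinghai2020nonass}, with the uniqueness of the irreducible bimodule credited to \cite{Springer2000jordanalg}, and the route you take --- complete reducibility of $M$ into copies of $\operatorname{Reg}\mathbb{O}$, identification of both $\operatorname{Re} M$ and $\mathscr{Z}(M)$ with the nucleus and centre $\mathbb{R}\subset\mathbb{O}$ in each summand, and the two scalar sums $\sum_{i=1}^{7}e_i\,1\,e_i=-7$ and $\sum_{i=1}^{7}e_ie_je_i=5e_j$ --- is precisely the standard argument underlying those references. Two small points. First, your verification of (1) is carried out only on simple tensors $v\otimes q$; a general element of $V\otimes_{\mathbb{R}}\mathbb{O}$ is a sum $\sum_j v_j\otimes e_j$, and to conclude you should fix a real basis of $V$ so that the conditions decouple into the single-copy case (or expand $[p,r,\sum_j v_j\otimes e_j]=\sum_k\bigl(\sum_j c_{jk}(p,r)\,v_j\bigr)\otimes e_k$ and use that for each $j\ge 1$ some associator $[p,r,e_j]$ is nonzero, e.g.\ $[e_2,e_4,e_1]=2e_7$, to force $v_j=0$). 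Second, you are right that the entire mathematical weight sits in the complete reducibility of (possibly infinite-dimensional) unital alternative $\mathbb{O}$-bimodules together with the uniqueness of $\operatorname{Reg}\mathbb{O}$; since you take that as the cited input, everything downstream --- including the well-definedness of $e_ime_i$ via $[e_i,m,e_i]=0$, which follows from the bimodule axioms --- is elementary and correctly handled.
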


\subsection{Para-Linear Maps}
Para-linearity generalizes linearity to the non-associative setting by constraining the associator's real part. The notion of para-linearity is first introduced  in \cite{huoqinghai2022Riesz} and then extended into a more general setting in \cite{huoqinghai2020nonass}.

\begin{mydef}[Right Para-Linear Map]
\label{def:para_linear}
Let $M$ be a right $\mathbb{O}$-module and $M'$ an $\mathbb{O}$-bimodule. A map $f \in \operatorname{Hom}_{\mathbb{R}}(M,M')$ is \textbf{right para-linear} if
\[
\operatorname{Re} B_p(f,x) = 0, \quad \forall p \in \mathbb{O}, \, x \in M,
\]
where the \textbf{second right associator} {$B_p$ is defined by}
\[
B_p(f,x) := f(x)p - f(xp),
\]
and $\operatorname{Re}: M' \to \operatorname{Re} M'$ is the real part operator. The set of right para-linear maps is denoted by $\operatorname{Hom}_{\mathcal{RO}}(M,M')$. Left para-linearity is defined analogously.
\end{mydef}
{For next result we refer to Theorem 3.10 in \cite{huoqinghai2020nonass}.}

\begin{thm}[Characterization of para-linearity]
\label{thm:para_linear_char}
Let \(M\) be a right \(\mathbb{O}\)-module and \(M'\) an \(\mathbb{O}\)-bimodule. For \(f \in \operatorname{Hom}_{\mathbb{R}}(M,M')\), let its unique decomposition with respect to the standard basis \(\{1,e_{1},\dots,e_{7}\}\) of \(\mathbb{O}\) be
\[
f(x)=f_{\mathbb{R}}(x)+\sum_{i=1}^{7} f_{i}(x)\,e_{i},
\]
where \(f_{\mathbb{R}}(x), f_{i}(x)\in \operatorname{Re} M'\) for all \(x\in M\) and \(i=1,\dots,7\). Then the following are equivalent:
\begin{enumerate}
    \item \(f \in \operatorname{Hom}_{\mathcal{R}\mathcal{O}}(M,M')\), i.e., \(f\) is right para-linear.
    \item For each \(i=1,\dots,7\) and all \(x\in M\),
    \[
    f_{i}(x) = -\,f_{\mathbb{R}}(x e_{i}).
    \]
    \item For all \(p\in \mathbb{O}\) and \(x\in M\),
    \[
    B_{p}(f,x)=\sum_{i=1}^{7} f_{\mathbb{R}}\!\big([x,p,e_{i}]\big)\,e_{i}.
    \]
\end{enumerate}
\end{thm}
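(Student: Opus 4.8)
The plan is to establish the three conditions in the cyclic order $(1)\Rightarrow(2)\Rightarrow(3)\Rightarrow(1)$. Throughout I would exploit two structural facts supplied by Proposition~\ref{prop:real_part}: the coefficients $f_{\mathbb{R}}(x),f_i(x)\in\operatorname{Re}M'$ are central, i.e.\ they commute and associate with every octonion, and the maps $f_{\mathbb{R}}=\operatorname{Re}\circ f$ and $f_i$ are $\mathbb{R}$-linear, being composites of the $\mathbb{R}$-linear $f$ with the $\mathbb{R}$-linear projection onto a basis component. I also record the elementary identity $\operatorname{Re}(e_je_i)=-\delta_{ij}$, which follows from the multiplication rule $e_ie_j=\epsilon_{ijk}e_k-\delta_{ij}$.

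For $(1)\Rightarrow(2)$ I would specialize the defining condition $\operatorname{Re}B_p(f,x)=0$ to $p=e_i$. Since $\operatorname{Re}(f(xe_i))=f_{\mathbb{R}}(xe_i)$ and, by centrality, $\operatorname{Re}(f(x)e_i)=\sum_j f_j(x)\operatorname{Re}(e_je_i)=-f_i(x)$, the vanishing of $\operatorname{Re}B_{e_i}(f,x)=-f_i(x)-f_{\mathbb{R}}(xe_i)$ yields exactly $f_i(x)=-f_{\mathbb{R}}(xe_i)$.

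The implication $(2)\Rightarrow(3)$ is the computational heart of the proof. Both $B_p(f,x)$ and the proposed right-hand side $\sum_k f_{\mathbb{R}}([x,p,e_k])e_k$ are $\mathbb{R}$-linear in $p$, so it suffices to verify the identity for $p\in\{1,e_1,\dots,e_7\}$; the case $p=1$ is immediate since $[x,1,e_k]=0$ and $B_1(f,x)=0$. For $p=e_j$ I would substitute the reconstruction $f(x)=f_{\mathbb{R}}(x)-\sum_i f_{\mathbb{R}}(xe_i)e_i$ furnished by $(2)$, expand $f(x)e_j$ using centrality together with $e_ie_j=\epsilon_{ijk}e_k-\delta_{ij}$, and likewise expand $f(xe_j)$; the diagonal terms ($i=j$) cancel and one is left with $B_{e_j}(f,x)=f_{\mathbb{R}}(x)e_j-\sum_{i,k}\epsilon_{ijk}f_{\mathbb{R}}(xe_i)e_k+\sum_i f_{\mathbb{R}}((xe_j)e_i)e_i$. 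On the other side, expanding the associator via $e_je_k=\epsilon_{jkl}e_l-\delta_{jk}$ gives $[x,e_j,e_k]=(xe_j)e_k+\delta_{jk}x-\epsilon_{jkl}(xe_l)$, so the right-hand side becomes $f_{\mathbb{R}}(x)e_j+\sum_k f_{\mathbb{R}}((xe_j)e_k)e_k-\sum_{k,l}\epsilon_{jkl}f_{\mathbb{R}}(xe_l)e_k$. The two expressions match term by term, the only nontrivial point being the equality of the cross terms, which after relabeling reduces to the cyclic symmetry $\epsilon_{ijk}=\epsilon_{jki}$ of the structure constants. This bookkeeping over the Fano structure constants, together with the careful use of centrality to reassociate products of the form $(f_i(x)e_i)e_j$, is where I expect the main difficulty to lie.

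Finally, $(3)\Rightarrow(1)$ is immediate: each summand $f_{\mathbb{R}}([x,p,e_k])e_k$ has its coefficient in $\operatorname{Re}M'$ and is therefore purely imaginary, so $\operatorname{Re}B_p(f,x)=0$ for all $p$ and $x$, which is precisely right para-linearity. This closes the cycle and establishes the equivalence of $(1)$, $(2)$, and $(3)$.
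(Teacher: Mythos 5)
Your argument is correct. Note that the paper itself does not prove this theorem: it imports it verbatim as Theorem~3.10 of \cite{huoqinghai2020nonass}, so there is no internal proof to compare against; what you have written is a self-contained verification that the paper leaves to the reference. The three steps all check out. In $(1)\Rightarrow(2)$ the identification $\operatorname{Re}(f(x)e_i)=-f_i(x)$ is legitimate because $f_j(x)\in\operatorname{Re}M'=\mathscr{Z}(M')$ lies in the nucleus of the bimodule (Proposition~\ref{prop:real_part} together with the symmetry relations $[p,q,m]=[q,m,p]=[m,p,q]$ in the bimodule axioms), so $(f_j(x)e_j)e_i=f_j(x)(e_je_i)$ and $\operatorname{Re}(m\,p)=m\operatorname{Re}(p)$ via the tensor decomposition $M'\cong\operatorname{Re}M'\otimes_{\mathbb{R}}\mathbb{O}$. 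I verified the $(2)\Rightarrow(3)$ computation: both sides reduce to
\[
f_{\mathbb{R}}(x)e_j-\sum_{i,k}\epsilon_{ijk}f_{\mathbb{R}}(xe_i)e_k+\sum_k f_{\mathbb{R}}\big((xe_j)e_k\big)e_k,
\]
and the matching of the cross terms is indeed exactly the cyclic symmetry $\epsilon_{ijk}=\epsilon_{jki}$; the reduction to basis octonions $p=e_j$ by $\mathbb{R}$-linearity in $p$ is sound. The step $(3)\Rightarrow(1)$ is immediate as you say. The only cosmetic caveat is that the middle implication is phrased partly as a plan (``I would substitute\dots''); since you display both final expressions and name the identity that closes the gap, the argument is complete, but in a final write-up the expansion of $f(x)e_j$ and $f(xe_j)$ should be carried out explicitly rather than announced.
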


\begin{lemma}[Uniqueness  {Lemma} \cite{huoqinghai2020nonass}] {Let $M$ be a right $\mathbb{O}$-module, $M'$ be an $\mathbb{O}$-bimodule and let}
\label{cor: A_p(x,f)=0 and f(px)=pf(x)}
 $f \in \operatorname{Hom}_{\mathcal{RO}}(M,M')$. Then:
\begin{enumerate}
    \item $f_{\mathbb{R}} = 0$ if and only if $f = 0$.
    \item If $M$ is an $\mathbb{O}$-bimodule, then $f|_{\operatorname{Re} M} = 0$ if and only if $f = 0$.
\end{enumerate}
\end{lemma}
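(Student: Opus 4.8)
The plan is to reduce both statements to the structural characterization in Theorem~\ref{thm:para_linear_char}, which shows that a right para-linear map is completely determined by its real component $f_{\mathbb{R}}$. The decisive identity is condition (2), namely $f_i(x) = -f_{\mathbb{R}}(xe_i)$ for $i = 1,\dots,7$, so that
\[
f(x) = f_{\mathbb{R}}(x) - \sum_{i=1}^{7} f_{\mathbb{R}}(xe_i)\,e_i, \qquad x \in M.
\]
Part (1) is then immediate: the reverse implication is trivial since $f_{\mathbb{R}} = \operatorname{Re}\circ f$, and for the forward implication, if $f_{\mathbb{R}} \equiv 0$ then every component $f_i(x) = -f_{\mathbb{R}}(xe_i)$ vanishes, whence $f = 0$.

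For part (2) I would first observe that, by part (1), it suffices to prove $f_{\mathbb{R}} = 0$ on all of $M$, given only that $f$ vanishes on $\operatorname{Re} M$. The bridge from the subspace $\operatorname{Re} M$ to the whole module is provided by the bimodule isomorphism $M \cong \operatorname{Re} M \otimes_{\mathbb{R}} \mathbb{O}$ of Proposition~\ref{prop:real_part}(2): every $x \in M$ can be written as $x = \sum_{j=0}^{7} r_j e_j$ with $r_j \in \operatorname{Re} M$ and $e_0 = 1$. By $\mathbb{R}$-linearity of $f_{\mathbb{R}}$ it is then enough to handle each term $f_{\mathbb{R}}(r e_j)$ with $r \in \operatorname{Re} M$.

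Here is where para-linearity does the real work. Applying the defining relation $\operatorname{Re} B_p(f,x) = 0$ with $x = r \in \operatorname{Re} M$ and $p = e_j$ gives $\operatorname{Re}\!\big(f(r)e_j\big) = \operatorname{Re}\!\big(f(r e_j)\big) = f_{\mathbb{R}}(r e_j)$. Since $r \in \operatorname{Re} M$ we have $f(r) = 0$ by hypothesis, so the left-hand side is zero and therefore $f_{\mathbb{R}}(r e_j) = 0$. Summing over $j$ yields $f_{\mathbb{R}}(x) = 0$ for every $x \in M$, and part (1) then forces $f = 0$.

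I expect the only genuinely nontrivial point to be the passage from vanishing on $\operatorname{Re} M$ to vanishing of $f_{\mathbb{R}}$ everywhere; the temptation is to argue this by $\mathbb{R}$-linearity alone, which fails because $r e_j \notin \operatorname{Re} M$ in general. The resolution is precisely to combine the tensor decomposition (to reduce to the generators $r e_j$) with the second-associator relation (to transfer the vanishing of $f(r)$ to the real part of $f(r e_j)$). Everything else is a direct application of the already-established characterization and the real-part operator, so no separate case analysis or delicate non-associativity bookkeeping should be required.
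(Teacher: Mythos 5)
Your proof is correct. Note that the paper itself does not prove this lemma but imports it from \cite{huoqinghai2020nonass}; your argument is exactly the natural derivation from the tools the paper does set up, namely Theorem~\ref{thm:para_linear_char}(2) (which gives $f(x)=f_{\mathbb{R}}(x)-\sum_{i=1}^{7}f_{\mathbb{R}}(xe_i)e_i$ and hence part (1)), combined with the decomposition $M\cong\operatorname{Re}M\otimes_{\mathbb{R}}\mathbb{O}$ and the relation $\operatorname{Re}B_{e_j}(f,r)=0$ to transfer vanishing on $\operatorname{Re}M$ to vanishing of $f_{\mathbb{R}}$ on all of $M$. Your closing observation is also the right one: the step $f(re_j)$ cannot be handled by $\mathbb{R}$-linearity alone, and the associator identity (equivalently, Theorem~\ref{thm:para_linear_char}(2) applied at $r\in\operatorname{Re}M$, which gives $f_{\mathbb{R}}(re_j)=-f_j(r)=0$) is precisely what closes the gap.
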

\begin{thm}[$\mathbb{O}$-Bimodule Structure \cite{huoqinghai2020nonass}]
\label{thm:para_bimodule}
Let  $M,M'$ be two $\mathbb{O}$-bimodules. The space $\operatorname{Hom}_{\mathcal{RO}}(M,M')$ is an $\mathbb{O}$-bimodule under {the left and the right multiplication by a scalar defined by:}
\[
(p \odot f)(x) := p f(x) + B_p(f,x), \quad
(f \odot p)(x) := f(px) - B_p(f,x).
\]
\end{thm}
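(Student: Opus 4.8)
The plan is to collapse the entire statement onto real-part data. By the characterization theorem (Theorem~\ref{thm:para_linear_char}) together with the uniqueness lemma (Lemma~\ref{cor: A_p(x,f)=0 and f(px)=pf(x)}), the real-part map $\Theta\colon \operatorname{Hom}_{\mathcal{RO}}(M,M')\to\operatorname{Hom}_{\mathbb R}(M,\operatorname{Re} M')$, $f\mapsto f_{\mathbb R}$, is an $\mathbb R$-linear bijection: every para-linear $f$ is reconstructed from $f_{\mathbb R}$ via $f_i=-f_{\mathbb R}(\,\cdot\, e_i)$, while $f_{\mathbb R}=0$ forces $f=0$. Writing $R_p(x):=xp$ and $L_p(x):=px$ for the right and left multiplications on the bimodule $M$, the target is to prove the two transport formulas $\Theta(p\odot f)=f_{\mathbb R}\circ R_p$ and $\Theta(f\odot p)=f_{\mathbb R}\circ L_p$, and then to read off every module axiom from the associator identities of $M$ through the injection $\Theta$.

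First I would record three elementary facts about $M'$, all immediate from $M'\cong \operatorname{Re} M'\otimes_{\mathbb R}\mathbb O$ (Proposition~\ref{prop:real_part}(2)) and the underlying octonionic identities: (i) $\operatorname{Re}(pm)=\operatorname{Re}(mp)$ for $m\in M'$, $p\in\mathbb O$; (ii) the real part of a triple product associates, i.e. $\operatorname{Re}[m,r,s]=\operatorname{Re}[r,m,s]=\operatorname{Re}[r,s,m]=0$ whenever exactly one of the three entries lies in $M'$; and (iii) para-linearity rephrased as $\operatorname{Re}(f(x)r)=f_{\mathbb R}(xr)$ for all $x\in M$, $r\in\mathbb O$, which together with (i) also gives $\operatorname{Re}(rf(x))=f_{\mathbb R}(xr)$.

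The main step is closure: $p\odot f$ and $f\odot p$ are again para-linear. I would check $\operatorname{Re} B_q(p\odot f,x)=0$ directly by expanding $B_q(p\odot f,x)=(p\odot f)(x)q-(p\odot f)(xq)$ and taking real parts termwise. Using (i)--(iii), each summand collapses to a value of $f_{\mathbb R}$ on an iterated product in $M$, and the whole expression reduces to $f_{\mathbb R}\big(x(qp)+x(pq)-(xp)q-(xq)p\big)$; this vanishes because $(xp)q+(xq)p=x(pq)+x(qp)+[x,p,q]+[x,q,p]$ and $[x,p,q]+[x,q,p]=0$ is exactly the alternativity of the right associator in $M$. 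The identical bookkeeping for $f\odot p$ yields $f_{\mathbb R}\big([x,p,q]+[p,x,q]\big)$, which vanishes by the bimodule antisymmetry $[p,x,q]=-[x,p,q]$ in $M$. The same computation simultaneously produces the transport formulas, since $\Theta(p\odot f)(x)=\operatorname{Re}(pf(x))+\operatorname{Re}(f(x)p)-\operatorname{Re}(f(xp))=f_{\mathbb R}(xp)$ and $\Theta(f\odot p)(x)=f_{\mathbb R}(px)$.

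Finally I would verify the axioms through $\Theta$. Additivity in both arguments and $\mathbb R$-homogeneity are immediate from additivity of $B_q$ in $q$ and $\mathbb R$-linearity of $f$; the unit acts trivially because $B_1(f,x)=0$. For the associator identities, injectivity of $\Theta$ lets me evaluate each one on real parts: the transport formulas give $\Theta([p,q,f]_\odot)(x)=\Theta([q,f,p]_\odot)(x)=\Theta([f,p,q]_\odot)(x)=-f_{\mathbb R}([x,p,q])$ and $\Theta(-[q,p,f]_\odot)(x)=-f_{\mathbb R}([x,p,q])$, so the bimodule chain $[p,q,f]_\odot=[q,f,p]_\odot=[f,p,q]_\odot=-[q,p,f]_\odot$ holds; here I only use that in $M$ the three cyclic associators $[x,p,q]=[p,q,x]=[q,x,p]$ coincide and that transpositions change sign. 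In particular $[f,p,q]_\odot=-[f,q,p]_\odot$ follows, giving the right-module structure, and the full chain gives the bimodule compatibility. I expect closure to be the only genuine obstacle: the termwise real-part reduction must be arranged so that precisely the module, respectively bimodule, associator of $M$ is produced, and the signs in (ii) and in the associator relations have to be tracked with care; everything after closure is a formal transport through $\Theta$.
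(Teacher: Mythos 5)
The paper does not prove this theorem itself: it is imported verbatim from \cite{huoqinghai2020nonass} (Theorem 3.22 there), so there is no in-paper argument to compare against. Judged on its own, your proof is correct and complete. The two load-bearing steps both check out: (a) closure, where the termwise real-part reduction of $B_q(p\odot f,x)$ does collapse to $f_{\mathbb R}\bigl(x(qp)+x(pq)-(xp)q-(xq)p\bigr)=f_{\mathbb R}\bigl(-[x,p,q]-[x,q,p]\bigr)=0$ by alternativity in $M$, and the analogous reduction for $f\odot p$ gives $f_{\mathbb R}\bigl([x,p,q]+[p,x,q]\bigr)=0$ by the bimodule antisymmetry; and (b) the transport formulas $\Theta(p\odot f)=f_{\mathbb R}\circ R_p$, $\Theta(f\odot p)=f_{\mathbb R}\circ L_p$, which, combined with the injectivity of $\Theta$ from Lemma \ref{cor: A_p(x,f)=0 and f(px)=pf(x)}(1) (applicable because the operator associators are differences of para-linear maps, hence para-linear), turn every module axiom into the corresponding associator identity of $M$ evaluated under $f_{\mathbb R}$. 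Your preliminary facts (i)--(iii) are all legitimate consequences of Proposition \ref{prop:real_part}(2) and of $\operatorname{Re}[\,\cdot,\cdot,\cdot\,]=0$ on $\mathbb O$. The only cosmetic issue is the phrasing of fact (ii), which conflates ``the real part of a triple product is association-independent'' with ``the real part of the associator vanishes''; these are the same assertion, but it should be stated once, cleanly. This real-part-transport strategy is very much in the spirit of how the source paper handles $\operatorname{Hom}_{\mathcal{RO}}$ (everything is controlled by $f_{\mathbb R}$ via Theorem \ref{thm:para_linear_char} and the uniqueness lemma), and it has the advantage of reducing all four bimodule associator identities to a single computation of $-f_{\mathbb R}([x,p,q])$ rather than verifying them one by one.
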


\subsection{Hilbert $\mathbb{O}$-Bimodules}
We now introduce Hilbert structures compatible with para-linearity. For more details see \cite{huoqinghai2022Riesz,huoqinghai2021tensor}.

\begin{mydef}[Pre-Hilbert Right $\mathbb{O}$-Module  \cite{huoqinghai2022Riesz}]
\label{def:pre_hilbert}
A right $\mathbb{O}$-module $H$ is a \textit{pre-Hilbert right $\mathbb{O}$-module} if equipped with an $\mathbb{R}$-bilinear map $\langle \cdot, \cdot \rangle: H \times H \to \mathbb{O}$ (a \textit{right $\mathbb{O}$-inner product}) satisfying:
\begin{enumerate}
    \item \textit{(Para-linearity)} For each $u \in H$, the map $\langle u, \cdot \rangle$ is right $\mathbb{O}$-para-linear.
    \item \textit{($\mathbb{O}$-Hermiticity)} $\langle u, v \rangle = \overline{\langle v, u \rangle}$ for all $u,v \in H$.
    \item \textit{(Positivity)} $\langle u, u \rangle \in \mathbb{R}^+$ and $\langle u, u \rangle = 0$ if and only if $u = 0$.
\end{enumerate}
The norm is $\|u\| = \sqrt{\langle u, u \rangle}$. Completeness with respect to this norm defines a \textit{Hilbert right $\mathbb{O}$-module}. An $\mathbb{O}$-bimodule with a right $\mathbb{O}$-inner product is a \textit{pre-Hilbert $\mathbb{O}$-bimodule}.
\end{mydef}

For \(u,v \in H\) and \(p \in \mathbb{O}\), the \emph{second (right) associator} is defined by
\begin{equation}\label{eq:associator_H}
B_{p}(u,v)\;:=\;B_{p}(\langle u,\cdot\rangle,v)\;=\;\langle u, v\rangle\,p\;-\;\langle u, v p\rangle .
\end{equation}
This notation differs from that in \cite{huoqinghai2022Riesz,huoqinghai2021tensor} in order to avoid confusion with the composition associator (see Definition~\ref{def:associator}). In particular, one has \(B_{p}(u,u)=0\) for all \(u\in H\) and \(p\in\mathbb{O}\) (see~\eqref{eq:Bp(uv)=-Bp(vu)}), which shows that one axiom in the definition of Hilbert spaces given by Goldstine–Horwitz in 1964 is redundant \cite{goldstine1964hilbert}.

{We set} $$\langle x, y \rangle_{\mathbb{R}} := \operatorname{Re} \langle x, y \rangle .$$
{Reasoning as in \cite{huoqinghai2022Riesz} one may prove:}
\begin{lemma}[Inner Product Identities]
\label{lem:inner_product}
{Let $H$ be a pre-Hilbert right $\mathbb{O}$-module.}
For $u,v \in H$, $p,q \in \mathbb{O}$ {we have}:
\begin{enumerate}
    \item $B_p(u,v) = \sum_{i=1}^7 e_i \langle u, [v, e_i, p] \rangle_{\mathbb{R}}$ and
    \begin{eqnarray}
    	\langle [u,p,q], v \rangle_{\mathbb{R}} &=& -\langle u, [v,p,q] \rangle_{\mathbb{R}} \label{lem:identity-inprod}\\
    	B_p(u,v)&=&-B_p(v,u).\label{eq:Bp(uv)=-Bp(vu)}
    \end{eqnarray}
    \item $\langle u p, v \rangle_{\mathbb{R}} = \langle u, v \overline{p} \rangle_{\mathbb{R}}$.
    \item $\langle u p, v \rangle = \overline{p} \langle u, v \rangle - B_p(u,v)$ and $\langle u, v p \rangle = \langle u, v \rangle p - B_p(u,v)$.
    \item If $u \in \operatorname{Re} H$ or $v \in \operatorname{Re} H$, then $B_p(u,v) = 0$.
\end{enumerate}
\end{lemma}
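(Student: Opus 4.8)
The plan is to establish the parts in a dependency order that differs from the printed one, so as to avoid circularity, drawing only on the axioms of Definition~\ref{def:pre_hilbert} (para-linearity of $\langle u,\cdot\rangle$, i.e.\ $\operatorname{Re} B_p(u,v)=0$, and $\mathbb{O}$-Hermiticity), the characterization Theorem~\ref{thm:para_linear_char}(3), and the elementary octonionic facts $\overline{ab}=\bar b\,\bar a$, $\operatorname{Re}(ab)=\operatorname{Re}(ba)$, $\operatorname{Re}\bar a=\operatorname{Re} a$, together with full antisymmetry of the associator and its vanishing when any slot is real. Two preliminary remarks drive everything. First, Hermiticity gives symmetry of the real pairing, $\langle x,y\rangle_{\mathbb{R}}=\operatorname{Re}\overline{\langle y,x\rangle}=\langle y,x\rangle_{\mathbb{R}}$. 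Second, since $\operatorname{Re} B_p(u,v)=0$, the octonion $B_p(u,v)$ is purely imaginary, so $\overline{B_p(u,v)}=-B_p(u,v)$.

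I would begin with the second identity of part (3), which is merely the defining relation~\eqref{eq:associator_H} rearranged; taking its real part and using $\operatorname{Re} B_p(u,v)=0$ yields $\langle u,vp\rangle_{\mathbb{R}}=\operatorname{Re}(\langle u,v\rangle p)$. This is the engine for part (2): setting $w=\langle v,u\rangle$, symmetry of the real pairing gives $\langle up,v\rangle_{\mathbb{R}}=\langle v,up\rangle_{\mathbb{R}}=\operatorname{Re}(wp)$, while Hermiticity gives $\langle u,v\bar p\rangle_{\mathbb{R}}=\operatorname{Re}(\langle u,v\rangle\bar p)=\operatorname{Re}(\bar w\bar p)$, and the chain $\operatorname{Re}(\bar w\bar p)=\operatorname{Re}(\overline{pw})=\operatorname{Re}(pw)=\operatorname{Re}(wp)$ closes the identity. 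Equation~\eqref{lem:identity-inprod} then follows by expanding the associator and applying part (2) twice: $\langle(up)q,v\rangle_{\mathbb{R}}=\langle u,(v\bar q)\bar p\rangle_{\mathbb{R}}$ and $\langle u(pq),v\rangle_{\mathbb{R}}=\langle u,v(\bar q\,\bar p)\rangle_{\mathbb{R}}$, so $\langle[u,p,q],v\rangle_{\mathbb{R}}=\langle u,[v,\bar q,\bar p]\rangle_{\mathbb{R}}$; since the associator annihilates real parts, $[v,\bar q,\bar p]=[v,-q,-p]=[v,q,p]=-[v,p,q]$, producing the stated sign. The first identity of part (1) I would read off directly from Theorem~\ref{thm:para_linear_char}(3) applied to the para-linear map $\langle u,\cdot\rangle$ (whose real component is $\langle u,\cdot\rangle_{\mathbb{R}}$), rewriting the associator indices by antisymmetry and commuting the real scalar $\langle u,[v,e_i,p]\rangle_{\mathbb{R}}$ past $e_i$.

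With both displays of part (1) available, the antisymmetry~\eqref{eq:Bp(uv)=-Bp(vu)} follows cleanly: the first identity of (1) represents $B_p(u,v)$ and $B_p(v,u)$ as $\sum_i e_i\langle u,[v,e_i,p]\rangle_{\mathbb{R}}$ and $\sum_i e_i\langle v,[u,e_i,p]\rangle_{\mathbb{R}}$, and combining symmetry of the real pairing with~\eqref{lem:identity-inprod} gives $\langle v,[u,e_i,p]\rangle_{\mathbb{R}}=-\langle u,[v,e_i,p]\rangle_{\mathbb{R}}$, so the two sums are negatives of each other. The first identity of part (3) is then Hermiticity plus this antisymmetry: $\langle up,v\rangle=\overline{\langle v,up\rangle}=\overline{\langle v,u\rangle p-B_p(v,u)}=\bar p\langle u,v\rangle-\overline{B_p(v,u)}$, and $\overline{B_p(v,u)}=-B_p(v,u)=B_p(u,v)$ by purity and~\eqref{eq:Bp(uv)=-Bp(vu)}. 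Finally, for part (4), if $v\in\operatorname{Re} H$ then $[v,e_i,p]=0$ for every $i$, so the first identity of (1) forces $B_p(u,v)=0$; the case $u\in\operatorname{Re} H$ reduces to this one via $B_p(u,v)=-B_p(v,u)$.

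I expect the only real difficulty to be non-associative bookkeeping rather than any conceptual step. The most delicate point is matching the associator-index convention of Theorem~\ref{thm:para_linear_char}(3), written with $[x,p,e_i]$, to the form $[v,e_i,p]$ appearing here, which must be done through full antisymmetry $[v,p,e_i]=-[v,e_i,p]$; one must also be careful that every octonionic rearrangement in parts (2) and~\eqref{lem:identity-inprod} is licensed solely by the unconditional identities $\operatorname{Re}(ab)=\operatorname{Re}(ba)$, $\operatorname{Re}\bar a=\operatorname{Re} a$, and $\overline{ab}=\bar b\,\bar a$, never by free association. Maintaining this order—the defining relation and part (2) first, then~\eqref{lem:identity-inprod}, the first identity of (1), \eqref{eq:Bp(uv)=-Bp(vu)}, the first identity of (3), and (4) last—is precisely what prevents a circular dependence between the antisymmetry of $B_p$ and the quasi-linear formulas of part (3).
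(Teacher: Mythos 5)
Your overall architecture is sound, and the paper itself offers no proof to compare against (it simply defers to \cite{huoqinghai2022Riesz}), so your ordering --- the defining relation and part (2) first, then \eqref{lem:identity-inprod}, then the display of (1), then \eqref{eq:Bp(uv)=-Bp(vu)}, then (3) and (4) --- is a genuine and correctly non-circular reconstruction. Parts (2), \eqref{lem:identity-inprod}, \eqref{eq:Bp(uv)=-Bp(vu)}, both identities of (3), and part (4) are all correctly argued from Hermiticity, from $\operatorname{Re} B_p(u,v)=0$, and from the unconditional octonion identities you list.

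The one step that does not close is exactly the one you flag as ``the most delicate point'': reading the first display of (1) off Theorem~\ref{thm:para_linear_char}(3). That theorem, applied to $f=\langle u,\cdot\rangle$ with $f_{\mathbb{R}}=\langle u,\cdot\rangle_{\mathbb{R}}$, gives
\[
B_p(u,v)=\sum_{i=1}^7 \langle u,[v,p,e_i]\rangle_{\mathbb{R}}\,e_i,
\]
and passing from $[v,p,e_i]$ to $[v,e_i,p]$ by full antisymmetry costs a factor $-1$ that nothing in your argument absorbs; your method therefore produces the \emph{negative} of the printed formula. This is not a bookkeeping triviality: taking $H=\operatorname{Reg}\mathbb{O}$ with $\langle u,v\rangle=\overline{u}\,v$ and $u=e_1$, $v=e_2$, $p=e_4$, one computes $B_p(u,v)=\langle u,v\rangle p-\langle u,vp\rangle=-2e_7$, whereas $\sum_{i}e_i\langle u,[v,e_i,p]\rangle_{\mathbb{R}}=+2e_7$. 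So either the statement's $[v,e_i,p]$ should read $[v,p,e_i]$ (which is what your derivation actually proves, and which is the version consistent with the definition \eqref{eq:associator_H} and with Theorem~\ref{thm:para_linear_char}(3)), or the intended sign convention for $B_p$ on inner products is the opposite one; in either case, asserting that antisymmetry ``rewrites the indices'' conceals an uncancelled sign. Fortunately the damage is contained: every later use you make of that display --- the antisymmetry \eqref{eq:Bp(uv)=-Bp(vu)} via \eqref{lem:identity-inprod} and the symmetry of $\langle\cdot,\cdot\rangle_{\mathbb{R}}$, and part (4) via the vanishing of the associator when $v\in\operatorname{Re}H$ --- is invariant under a global sign flip of the formula, so the rest of your proof stands once this sign is traced explicitly.
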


\begin{thm}[Riesz Representation Theorem \cite{huoqinghai2022Riesz}]
\label{thm:riesz}
{Let $H$ be a Hilbert right $\mathbb{O}$-module.}
For
every bounded right para-linear functional $f: H \to \mathbb{O}$, {there exists} a unique $z \in H$ such that
\[
f(x) = \langle x, z \rangle, \quad \forall x \in H.
\]
\end{thm}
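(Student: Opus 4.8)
The plan is to collapse the octonion-valued functional onto its real part, solve the resulting real problem with the classical Riesz theorem, and then lift the solution back by exploiting the rigidity of para-linearity, namely that a para-linear functional is completely determined by its real part.

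First I would set $f_{\mathbb{R}} := \operatorname{Re}\circ f\colon H\to\mathbb{R}$. As a composition of the $\mathbb{R}$-linear map $f$ with the $\mathbb{R}$-linear projection $\operatorname{Re}$, it is an $\mathbb{R}$-linear functional, and boundedness transfers through $|f_{\mathbb{R}}(x)|\le |f(x)|\le \|f\|\,\|x\|$. I would then note that $\langle\cdot,\cdot\rangle_{\mathbb{R}}=\operatorname{Re}\langle\cdot,\cdot\rangle$ is a symmetric $\mathbb{R}$-bilinear form by $\mathbb{O}$-Hermiticity and, by the positivity axiom of Definition~\ref{def:pre_hilbert}, is positive definite with $\langle x,x\rangle_{\mathbb{R}}=\langle x,x\rangle=\|x\|^2$. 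Hence $(H,\langle\cdot,\cdot\rangle_{\mathbb{R}})$ is a genuine real Hilbert space whose norm agrees with the original one, so the completeness hypothesis carries over unchanged.

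Applying the classical real Riesz representation theorem on $(H,\langle\cdot,\cdot\rangle_{\mathbb{R}})$ produces a unique $z\in H$ with $f_{\mathbb{R}}(x)=\langle x,z\rangle_{\mathbb{R}}$ for every $x\in H$. I would take $\langle\cdot,z\rangle$ as the candidate representative: by the inner-product axioms it is a bounded para-linear functional, and its real part is $\operatorname{Re}\langle x,z\rangle=\langle x,z\rangle_{\mathbb{R}}=f_{\mathbb{R}}(x)=\operatorname{Re} f(x)$. Therefore the difference $g:=f-\langle\cdot,z\rangle$ is a para-linear functional with $g_{\mathbb{R}}=0$. The Uniqueness Lemma~\ref{cor: A_p(x,f)=0 and f(px)=pf(x)}(1) then forces $g=0$, i.e.\ $f(x)=\langle x,z\rangle$ for all $x$. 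The mechanism behind this step is precisely Theorem~\ref{thm:para_linear_char}: the imaginary components satisfy $f_i(x)=-f_{\mathbb{R}}(xe_i)$, so once the real parts coincide all seven remaining components are automatically pinned down by the same real datum.

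Uniqueness of $z$ is then immediate: if $\langle x,z\rangle=\langle x,z'\rangle$ for all $x$, taking real parts gives $\langle x,z-z'\rangle_{\mathbb{R}}=0$, and the choice $x=z-z'$ yields $\|z-z'\|^2=0$. I expect the genuine content to sit entirely in the lifting step---upgrading equality of the scalar real parts to equality of the full $\mathbb{O}$-valued functionals. This is where para-linearity, rather than naive octonionic linearity, is indispensable: the component rigidity of Theorem~\ref{thm:para_linear_char}, equivalently the injectivity statement of Lemma~\ref{cor: A_p(x,f)=0 and f(px)=pf(x)}(1), is exactly what makes the real part a complete invariant and thereby rescues the Riesz duality that associativity would otherwise obstruct.
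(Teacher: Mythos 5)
Your overall strategy---collapse to the real part, apply the classical Riesz theorem in the real Hilbert space $(H,\langle\cdot,\cdot\rangle_{\mathbb{R}})$, and lift back using the fact that a right para-linear functional is determined by its real part (Lemma~\ref{cor: A_p(x,f)=0 and f(px)=pf(x)}(1) via Theorem~\ref{thm:para_linear_char})---is the right one; note that this paper does not reprove the statement but imports it from \cite{huoqinghai2022Riesz}, where the argument runs along exactly these lines. The real-Hilbert-space reduction, the boundedness transfer, and the uniqueness argument are all fine.

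There is, however, one genuine gap, and it sits at the only nontrivial point: the assertion that the candidate $x\mapsto\langle x,z\rangle$ ``is a bounded para-linear functional by the inner-product axioms.'' The axiom in Definition~\ref{def:pre_hilbert} gives right para-linearity of the \emph{second} slot $\langle z,\cdot\rangle$; it says nothing directly about the first. By $\mathbb{O}$-Hermiticity one has $\langle\cdot,z\rangle=\overline{\langle z,\cdot\rangle}$, and octonionic conjugation does not preserve right para-linearity: combining Lemma~\ref{lem:inner_product}(2) with the para-linearity of $\langle x,\cdot\rangle$ gives
\[
\operatorname{Re}\bigl(\langle x,z\rangle\,p-\langle xp,z\rangle\bigr)
=\operatorname{Re}\bigl(\langle x,z\rangle\,(p-\overline{p})\bigr),
\]
which vanishes for all $p\in\mathbb{O}$ only when $\langle x,z\rangle$ is real. (In the regular bimodule $H=\mathbb{O}$ with $\langle u,v\rangle=\overline{u}\,v$, the bounded right para-linear functionals are precisely $x\mapsto\overline{z}\,x=\langle z,x\rangle$, and $x\mapsto\overline{x}\,z$ is not among them for $z\notin\mathbb{R}$.) Consequently your $g=f-\langle\cdot,z\rangle$ need not be para-linear, and the Uniqueness Lemma cannot be applied to it as written. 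What your argument does deliver, correctly, is that $f$ and $\langle z,\cdot\rangle$ are both bounded right para-linear functionals with the same real part $\langle\cdot,z\rangle_{\mathbb{R}}$, whence $f=\langle z,\cdot\rangle$ by Lemma~\ref{cor: A_p(x,f)=0 and f(px)=pf(x)}(1); this coincides with $\langle\cdot,z\rangle$ only when the latter is real-valued. You should either state the conclusion in that form, or, if the representation is to read literally as $f(x)=\langle x,z\rangle$, supply the missing verification that the first slot is right para-linear under the conventions of the cited source---under the axioms as stated here, that verification is the crux of the theorem, not a formality.
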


\begin{thm}[Tensor Decomposition \cite{huoqinghai2021tensor}]
\label{thm:tensor_decomp}
Let $H$ be a Hilbert $\mathbb{O}$-bimodule. Then
\[
H \cong \operatorname{Re} H \otimes_{\mathbb{R}} \mathbb{O},
\]
with the natural Hilbert $\mathbb{O}$-bimodule structure. Explicitly:
\begin{enumerate}
    \item For $u,v \in \operatorname{Re} H$, $\langle u, v \rangle \in \mathbb{R}$.
    \item For $x = \sum_{i=0}^7 e_i x_i$, $y = \sum_{j=0}^7 e_j y_j$ with $x_i, y_j \in \operatorname{Re} H$,
    \[
    \langle x, y \rangle = \sum_{i,j=0}^7 \langle x_i, y_j \rangle \overline{e_i} e_j.
    \]
\end{enumerate}
\end{thm}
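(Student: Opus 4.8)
The plan is to treat the algebraic isomorphism and the metric content separately. The bimodule isomorphism $H \cong \operatorname{Re} H \otimes_{\mathbb{R}}\mathbb{O}$ is already supplied by Proposition~\ref{prop:real_part}(2): every $x\in H$ has a unique expansion $x=\sum_{i=0}^{7}e_i x_i$ with $x_i=\operatorname{Re}(\overline{e_i}x)\in\operatorname{Re} H$. What remains is to show that the inner product is transported into the claimed ``natural'' form, and the whole statement then collapses to the two itemized claims. Claim (2) is a bookkeeping consequence of claim (1), so the real work is claim (1): for $u,v\in\operatorname{Re} H$ one must prove $\langle u,v\rangle\in\mathbb{R}$.

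For claim (1), the key observation is that elements of $\operatorname{Re} H=\mathscr{Z}(H)$ are central, so $e_i u=u e_i$ and $e_j v=v e_j$; I would then evaluate the single scalar $\langle e_i u, e_j v\rangle$ in two independent ways. Writing $c:=\langle u,v\rangle$ and using Lemma~\ref{lem:inner_product}(3) together with the vanishing $B_p(u,v)=0$ (valid whenever $u$ or $v\in\operatorname{Re} H$, by Lemma~\ref{lem:inner_product}(4)), the direct computation gives
\[
\langle e_i u, e_j v\rangle=\langle u e_i, v e_j\rangle=(\overline{e_i}\,c)\,e_j.
\]
On the other hand, the Hermiticity axiom $\langle a,b\rangle=\overline{\langle b,a\rangle}$ of Definition~\ref{def:pre_hilbert} together with the same two lemmas yields
\[
\langle e_i u, e_j v\rangle=\overline{\langle e_j v, e_i u\rangle}=\overline{(\overline{e_j}\,\overline{c})\,e_i}=\overline{e_i}\,(c\,e_j).
\]
Equating the two expressions forces the associator $[\overline{e_i},c,e_j]=0$ for all $i,j\in\{0,\dots,7\}$. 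Since $\{\overline{e_i}\}$ and $\{e_j\}$ each span $\mathbb{O}$ over $\mathbb{R}$, trilinearity of the associator upgrades this to $[a,c,b]=0$ for all $a,b\in\mathbb{O}$; as the (middle) nucleus of the octonions is exactly $\mathbb{R}$, this forces $c\in\mathbb{R}$. This is the step I expect to be the crux, and it is pleasantly counterintuitive: it is precisely the \emph{non-associativity} of $\mathbb{O}$ that rigidifies the inner product and forces reality, whereas in an associative algebra the associator would vanish identically and give no information.

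With claim (1) in hand, claim (2) follows by expanding $\langle x,y\rangle=\sum_{i,j}\langle e_i x_i, e_j y_j\rangle$ via $\mathbb{R}$-bilinearity, applying the direct computation above, and pulling out the now-real scalar $\langle x_i,y_j\rangle$ to obtain $\langle x,y\rangle=\sum_{i,j}\langle x_i,y_j\rangle\,\overline{e_i}e_j$. Finally, to promote the algebraic isomorphism to an isomorphism of Hilbert $\mathbb{O}$-bimodules, I would equip $\operatorname{Re} H\otimes_{\mathbb{R}}\mathbb{O}$ with the inner product prescribed by this formula, starting from the real inner product $\langle\cdot,\cdot\rangle_{\mathbb{R}}$ on $\operatorname{Re} H$, and verify the three axioms of Definition~\ref{def:pre_hilbert}: para-linearity and Hermiticity are immediate from the explicit formula, while positivity and completeness are inherited from $H$ because the reconstruction $x\mapsto(x_i)$ is $\mathbb{R}$-linear and, by the formula just proved, norm-preserving. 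Thus the isomorphism is one of Hilbert $\mathbb{O}$-bimodules.
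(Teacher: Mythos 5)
The paper itself gives no proof of Theorem \ref{thm:tensor_decomp}: it is imported verbatim from \cite{huoqinghai2021tensor}, so there is no in-paper argument to measure yours against. On its own terms your proposal is correct, and---importantly---it is self-contained relative to what the paper states \emph{before} this theorem: Lemmas \ref{lem:left_mult_adjoint} and \ref{lem:re=re*} are proved in the paper \emph{using} Theorem \ref{thm:tensor_decomp} and were therefore off-limits, and you correctly avoid them, relying only on Proposition \ref{prop:real_part} (centrality of $\operatorname{Re}H$ and the module decomposition) and Lemma \ref{lem:inner_product}, whose statement concerns pre-Hilbert right $\mathbb{O}$-modules and does not presuppose the bimodule structure. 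The crux --- computing $\langle e_iu,e_jv\rangle$ once via Lemma \ref{lem:inner_product}(3)--(4) to get $(\overline{e_i}\,c)e_j$ and once via Hermiticity to get $\overline{e_i}(c\,e_j)$, concluding $[\overline{e_i},c,e_j]=0$ for all $i,j$ and hence $c$ lies in the nucleus of $\mathbb{O}$, which is $\mathbb{R}$ --- is sound; that last fact is the only external ingredient, and it is standard (one could also extract it from identity \eqref{eq:im} to stay inside the paper's toolkit). Two small points worth making explicit: the pairs with $i=0$ or $j=0$ carry no information, so the spanning argument really runs over $i,j\in\{1,\dots,7\}$ together with the vanishing of associators having a real slot; and the final upgrade to an isomorphism of Hilbert $\mathbb{O}$-bimodules is asserted rather than carried out, though the missing verification is routine --- positivity and norm preservation reduce to $\operatorname{Re}\langle x,x\rangle=\sum_i\|x_i\|^2$, which follows from your formula in item (2) because $\operatorname{Re}(\overline{e_i}e_j)=\delta_{ij}$.
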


\subsection{Properties of Left Multiplications and Real Part}

{So far we have established properties of the right multiplication by an octonion, however in the sequel we shall make use of properties of the left multiplications which are studied in the next results.}
	
				\begin{lemma}[Adjoint of Left Multiplication]
					\label{lem:left_mult_adjoint}
					Let $ H$ be a Hilbert $\mathbb{O}$-bimodule. For all $u,v \in H$ and $p \in \mathbb{O}$ {the following properties hold}:
					\begin{enumerate}
						\item \textit{Real symmetry:} $\langle u, \overline{p} v \rangle_{\mathbb{R}} = \langle p u, v \rangle_{\mathbb{R}}$
						\item \textit{Full correction:} $\langle u, \overline{p} v \rangle = \langle p u, v \rangle + B_p(u,v)$.
					\end{enumerate}
				\end{lemma}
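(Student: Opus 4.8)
The plan is to deduce both identities from the tensor decomposition $H \cong \operatorname{Re} H \otimes_{\mathbb{R}} \mathbb{O}$ of Theorem~\ref{thm:tensor_decomp} together with the centrality $\operatorname{Re} H = \mathscr{Z}(H)$ from Proposition~\ref{prop:real_part}. Both sides of (1) and (2) are $\mathbb{R}$-linear in $p$ (conjugation, left and right multiplication, and the inner product are all $\mathbb{R}$-linear) and $\mathbb{R}$-bilinear in $(u,v)$, so it suffices to verify them when $p \in \{1, e_1, \dots, e_7\}$ and $u = e_a u_0$, $v = e_b v_0$ with $u_0, v_0 \in \operatorname{Re} H$; the case $p = 1$ is trivial. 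The first structural observation is that (1) is simply the real part of (2): by Lemma~\ref{lem:inner_product}(1) the correction term $B_p(u,v) = \sum_{i=1}^{7} e_i\,\langle u, [v,e_i,p]\rangle_{\mathbb{R}}$ lies in $\operatorname{Im} \mathbb{O}$, so $\operatorname{Re} B_p(u,v) = 0$, and applying $\operatorname{Re}$ to (2) yields (1). Thus the substantive content is the full octonion-valued identity (2), with (1) following for free.

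It is nonetheless worth establishing (1) directly as a consistency check. Writing $u = \sum_k e_k u_k$, $v = \sum_k e_k v_k$ and using the formula $\langle x,y\rangle = \sum_{i,j} \langle x_i,y_j\rangle\,\overline{e_i} e_j$ of Theorem~\ref{thm:tensor_decomp}, centrality lets me pull every module associator out, so that $pu = \sum_k e_k\big(\sum_i a_{ik} u_i\big)$ with $a_{ik} = \operatorname{Re}\big(\overline{e_k}(p e_i)\big)$, and similarly for $\overline{p} v$. Since $\operatorname{Re}(\overline{e_i} e_j) = \delta_{ij}$, both real inner products collapse to $\sum_{i,k} a_{ik}\langle u_i, v_k\rangle$, and the required symmetry $\operatorname{Re}\big(\overline{e_k}(p e_i)\big) = \operatorname{Re}\big(\overline{e_i}(\overline{p} e_k)\big)$ is exactly the composition-algebra adjunction for the real octonionic inner product $\langle \xi,\eta\rangle_{\mathbb{O}} := \operatorname{Re}(\overline{\xi}\eta)$, namely $\langle p\xi, \eta\rangle_{\mathbb{O}} = \langle \xi, \overline{p}\eta\rangle_{\mathbb{O}}$, which is itself a consequence of the associativity of the real part, $\operatorname{Re}((ab)c) = \operatorname{Re}(a(bc))$.

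For (2) I would carry out the same reduction but retain the full octonion-valued inner product rather than only its real part. Using centrality of $u_0, v_0$ to bracket-shift all module associators, each of $\langle u, \overline{p} v\rangle$, $\langle pu, v\rangle$, and $B_p(u,v) = \langle u,v\rangle p - \langle u, v p\rangle$ becomes $\langle u_0, v_0\rangle$ times an explicit octonion assembled from $e_a$, $e_b$, $p$ and their conjugates. Forming the difference $\langle u, \overline{p} v\rangle - \langle pu, v\rangle$ produces a single triple associator of $\overline{e_a}$, $p$, $e_b$, whereas $B_p(u,v)$ produces the associator of $\overline{e_a}$, $e_b$, $p$; the identity (2) is then equivalent to relating these two brackets, after absorbing the conjugation via $\overline{p} = 2\operatorname{Re} p - p$ (associators annihilate the real part) and invoking the total antisymmetry of the octonionic associator. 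This alternating, hence cyclic, property is precisely the mechanism that converts the discrepancy between left and right multiplication into the correction term $B_p$, and it determines the placement and sign of that term.

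The main obstacle is exactly this last sign bookkeeping: one must track how the conjugate $\overline{p}$ threads through each triple associator and how the antisymmetry of $[\,\cdot,\cdot,\cdot\,]$ redistributes its three arguments, since it is the antisymmetry, rather than any cancellation, that pins down the correction. Every other step—the linearity reductions, the collapse of the real inner products through $\operatorname{Re}(\overline{e_i}e_j) = \delta_{ij}$, and the appearance of associators from non-associative rebracketing—is routine once centrality of $\operatorname{Re} H$ is invoked. As a final sanity check, the a priori identity $\operatorname{Re} B_p(u,v) = 0$ guarantees that the octonion-valued statement (2) is consistent with the real-valued statement (1), so any error in the associator computation would be flagged by a nonvanishing real part.
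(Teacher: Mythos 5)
Your part (1) is correct and is essentially the paper's own argument: both decompose $u,v$ over $\operatorname{Re} H$ via Theorem~\ref{thm:tensor_decomp} and reduce the claim to $\operatorname{Re}\bigl(\overline{e_i}(\overline{p}e_j)\bigr)=\operatorname{Re}\bigl(\overline{pe_i}\,e_j\bigr)$, i.e.\ to the associativity of the real part. Your remark that (1) is the real part of (2) is also valid, but note that the paper argues in the opposite direction for (2): it uses (1) to see that $\langle u,\overline pv\rangle-\langle pu,v\rangle$ is purely imaginary, extracts the coefficient of $e_k$ as a real inner product $\langle u,[\overline p,v,e_k]\rangle_{\mathbb{R}}$ against a module associator, and matches it with the componentwise formula for $B_p(u,v)$ in Lemma~\ref{lem:inner_product}(1). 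Your route for (2) --- reduce to $u=e_au_0$, $v=e_bv_0$ with $u_0,v_0\in\operatorname{Re}H$, express each term as $\langle u_0,v_0\rangle$ times an explicit octonion, and compare associators --- is genuinely different and viable in principle; it replaces the appeal to Lemma~\ref{lem:inner_product}(1) by a bare octonion identity.

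The problem is that you defer exactly the step that carries all of the content. Carrying your reduction out: $\langle u,\overline pv\rangle-\langle pu,v\rangle=\langle u_0,v_0\rangle\bigl(\overline{e_a}(\overline p e_b)-(\overline{e_a}\,\overline p)e_b\bigr)=-\langle u_0,v_0\rangle[\overline{e_a},\overline p,e_b]=\langle u_0,v_0\rangle[\overline{e_a},p,e_b]$, whereas $B_p(u,v)=\langle u_0,v_0\rangle[\overline{e_a},e_b,p]=-\langle u_0,v_0\rangle[\overline{e_a},p,e_b]$ by antisymmetry. So your method produces the correction term $-B_p(u,v)$ (equivalently $B_p(v,u)$ or $B_{\overline p}(u,v)$), not $+B_p(u,v)$. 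This is not a slip on your side: on the regular bimodule $\mathbb{O}$ with $\langle x,y\rangle=\overline{x}y$, taking $u=e_1$, $v=e_2$, $p=e_4$ gives $\langle u,\overline pv\rangle=e_7$, $\langle pu,v\rangle=-e_7$ and $B_p(u,v)=[\overline{e_1},e_2,e_4]=-2e_7$, so the identity as printed would read $e_7=-3e_7$. The sign traces to an internal mismatch between Lemma~\ref{lem:inner_product}(1) and Theorem~\ref{thm:para_linear_char}(3) (they differ by the transposition of $e_i$ and $p$ inside the module associator), and the paper's proof of (2) relies on the former. Your plan therefore completes to a proof of the sign-corrected identity rather than of the statement exactly as written; the ``bookkeeping'' you postpone is precisely where this has to be confronted.
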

				
				\begin{proof}
					Decompose $u = \sum_i u_i e_i$, $v = \sum_j v_j e_j$ with $u_i,v_j \in \operatorname{Re}  H$.
					
					\noindent
					\textit{Point (1):} Using Lemma \ref{lem:inner_product}, we have
					\[
					\langle u, \overline{p} v \rangle_{\mathbb{R}} = \sum_{i,j} \langle u_i, v_j \rangle \operatorname{Re}\left( \overline{e_i} (\overline{p} e_j) \right).
					\]
					Using {$\operatorname{Re}(\overline{e_i} \, \overline{p} e_j) = \operatorname{Re}((\overline{pe_i}) e_j)$} and reality of $\langle u_i,v_j \rangle$ by Theorem \ref{thm:tensor_decomp}:
					\[
				{	\langle u, \overline{p} v \rangle_{\mathbb{R}}= \sum_{i,j} \langle u_i, v_j \rangle \operatorname{Re}\left( \overline{p e_i} e_j \right) = \langle p u, v \rangle_{\mathbb{R}}.}
					\]
					
					\noindent
					\textit{Point (2):} From (1), $\langle u, \overline{p} v \rangle - \langle p u, v \rangle$ is purely imaginary. Express:
					\[
					\langle u, \overline{p} v \rangle - \langle p u, v \rangle = \sum_{k=1}^7 \lambda_k e_k, \quad \lambda_k \in \mathbb{R}.
					\]
				{The coefficients} satisfy:
					\[
					-\lambda_k = \operatorname{Re} \left( \left( \langle u, \overline{p} v \rangle - \langle p u, v \rangle \right) e_k \right) = \langle u, [\overline{p}, v, e_k] \rangle_{\mathbb{R}} = -\langle u, [v, e_k, p] \rangle_{\mathbb{R}},
					\]
					by associator antisymmetry. Thus $\lambda_k = \langle u, [v, e_k, p] \rangle_{\mathbb{R}}$, yielding the result.
				\end{proof}

				\begin{lemma}[Real Part Operator Properties]
					\label{lem:re=re*}
					The operator $\operatorname{Re}$ satisfies
					\begin{enumerate}
						\item \textit{Self-adjointness:} $\langle \operatorname{Re} x, y \rangle_{\mathbb{R}} = \langle x, \operatorname{Re} y \rangle_{\mathbb{R}}$ for all $x,y\in H$;
						\item \textit{Determination:} $x = 0$ if and only if $\langle x, y \rangle = 0$ for all $y \in \operatorname{Re}  H$.
					\end{enumerate}
				\end{lemma}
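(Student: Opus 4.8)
The plan is to treat the two assertions separately, drawing on the explicit formula for the real-part operator in Proposition~\ref{prop:real_part}, the adjoint relations for right and left multiplication (Lemma~\ref{lem:inner_product}(2) and Lemma~\ref{lem:left_mult_adjoint}(1)), the identity $B_p(u,v)=0$ for $v\in\operatorname{Re} H$ (Lemma~\ref{lem:inner_product}(4)), and the tensor decomposition $H\cong\operatorname{Re} H\otimes_{\mathbb{R}}\mathbb{O}$ (Theorem~\ref{thm:tensor_decomp}).

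For the self-adjointness in (1), I would start from $\operatorname{Re} x=\frac{5}{12}x-\frac{1}{12}\sum_{i=1}^{7}e_i x e_i$ and, using $\mathbb{R}$-bilinearity of $\langle\cdot,\cdot\rangle_{\mathbb{R}}$, reduce the statement to the single term-wise identity
\[
\langle e_i x e_i, y\rangle_{\mathbb{R}} = \langle x, e_i y e_i\rangle_{\mathbb{R}}, \qquad i=1,\dots,7.
\]
To prove it I would first record that $e_i(y e_i)=(e_i y)e_i$, so that $e_i y e_i$ is unambiguous; this follows from the bimodule alternativity $[e_i,e_i,y]=0$ together with the associator symmetries in the definition of an $\mathbb{O}$-bimodule. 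I would then transport the two factors of $e_i$ across the inner product one at a time: the right factor via Lemma~\ref{lem:inner_product}(2), giving $\langle (e_i x)e_i, y\rangle_{\mathbb{R}}=\langle e_i x, y\overline{e_i}\rangle_{\mathbb{R}}=-\langle e_i x, y e_i\rangle_{\mathbb{R}}$, and the left factor via Lemma~\ref{lem:left_mult_adjoint}(1), giving $\langle e_i x, y e_i\rangle_{\mathbb{R}}=\langle x,\overline{e_i}(y e_i)\rangle_{\mathbb{R}}=-\langle x, e_i y e_i\rangle_{\mathbb{R}}$. The two sign changes cancel, yielding the displayed identity; summing over $i$ then gives $\langle\operatorname{Re} x,y\rangle_{\mathbb{R}}=\langle x,\operatorname{Re} y\rangle_{\mathbb{R}}$.

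For (2) the only nontrivial implication is that $\langle x,y\rangle=0$ for all $y\in\operatorname{Re} H$ forces $x=0$. The key point is that vanishing on the real part already propagates to all of $H$: by Lemma~\ref{lem:inner_product}(4) one has $B_p(x,y)=0$ whenever $y\in\operatorname{Re} H$, so the definition~\eqref{eq:associator_H} of $B_p$ gives $\langle x, y p\rangle=\langle x,y\rangle p$ for every $p\in\mathbb{O}$. Writing an arbitrary $w\in H$ via the tensor decomposition as $w=\sum_{i=0}^{7}y_i e_i$ with $y_i\in\operatorname{Re} H$, $\mathbb{R}$-bilinearity then yields $\langle x,w\rangle=\sum_{i=0}^{7}\langle x,y_i\rangle e_i=0$. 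Specializing to $w=x$ gives $\langle x,x\rangle=0$, hence $x=0$ by the positivity axiom in Definition~\ref{def:pre_hilbert}.

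I expect the main obstacle to lie in part (1): the non-associativity bookkeeping must be done carefully, verifying that the two copies of $e_i$ can be moved across the inner product without generating spurious associator contributions. This is precisely where the alternativity identity $[e_i,e_i,y]=0$ and the exact form of the correction terms in Lemmas~\ref{lem:inner_product} and~\ref{lem:left_mult_adjoint} are essential. Part (2) is conceptually straightforward once the propagation identity $\langle x,y p\rangle=\langle x,y\rangle p$ for $y\in\operatorname{Re} H$ is isolated.
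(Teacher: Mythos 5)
Your proposal is correct and follows essentially the same route as the paper: part (1) is the same term-by-term transport of the two factors of $e_i$ across the inner product via Lemma~\ref{lem:inner_product}(2) and Lemma~\ref{lem:left_mult_adjoint}(1) applied to the explicit formula \eqref{eq:real_part_formula}, and part (2) is the same decomposition of an arbitrary element into real components followed by specialization to $\langle x,x\rangle=0$. Your version merely makes explicit two points the paper leaves implicit (the alternativity guaranteeing $e_ixe_i$ is unambiguous, and the propagation identity $\langle x,yp\rangle=\langle x,y\rangle p$ for $y\in\operatorname{Re}H$), which is harmless.
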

				
				\begin{proof}
\textit{(1):} Using  equality   \eqref{eq:real_part_formula} Lemma \ref{lem:inner_product}(2) and  Lemma \ref{lem:left_mult_adjoint}(1):
					\[
					\langle \operatorname{Re} x, y \rangle_{\mathbb{R}} = \frac{5}{12} \langle x,y \rangle_{\mathbb{R}} - \frac{1}{12} \sum_{i=1}^7 \langle e_i x e_i, y \rangle_{\mathbb{R}} = \frac{5}{12} \langle x,y \rangle_{\mathbb{R}} - \frac{1}{12} \sum_{i=1}^7 \langle x, \overline{e_i} y \overline{e_i} \rangle_{\mathbb{R}} = \langle x, \operatorname{Re} y \rangle_{\mathbb{R}}.
					\]
					
					\noindent
					\textit{(2):} If $\langle x, y \rangle = 0$ for all $y \in \operatorname{Re} H$, decompose arbitrary $u \in H$ as $u = \sum_{i=0}^7 e_i u_i$ with $u_i \in \operatorname{Re} H$. Then:
					\[
					\langle u, x \rangle = \sum_{i=0}^7 e_i \langle u_i, x \rangle = 0,
					\]
					so $x = 0$. The converse is immediate.
				\end{proof}

This algebraic framework provides the foundation for the operator theory developed in the subsequent sections.

				\section{Regular Composition: Resolving Non-Associative Obstructions}	\label{sec:regular_composition}
				
				This section introduces the \textit{regular composition} $\circledcirc$ to overcome the failure of {the classical composition of  operators while working in} octonionic Hilbert spaces. {Moreover,} non-associativity prevents {a standard definition of} adjoint, as $\langle T\cdot,\cdot\rangle$ may not be para-linear. The regular composition provides an associative-like product compatible with octonionic structures.

				Let $ H_1,  H_2$ be Hilbert $\mathbb{O}$-bimodules and $T:  H_1 \to  H_2$ be a bounded real-linear operator. The map {$\phi_{T,y}(x) := \langle Tx, y \rangle$ denoted, for short, $\phi_y(x)$} is \textit{not} necessarily para-linear. Consequently, there may be no $z \in  H_1$ satisfying
				\[
				\langle Tx, y \rangle = \langle x, z \rangle \quad \forall\ x \in  H_1,
				\]
			{thus making} the classical adjoint definition $T^*y = z$ invalid.
				
				Indeed, for $p \in \mathbb{O}$, non-associativity implies in general that
				\[\re \phi_y(xp) = \re \langle T(xp), y \rangle \neq \re  \langle Tx \cdot p, y \rangle = \re \langle Tx, y \rangle p=\re \phi_y(x)p,
				\]
				since $T$ is not $\mathbb{O}$-linear. That is, $\phi_y$ fails {to be  right para-linear}. Counterexamples exist for left multiplication operators $L_q$ with $q \notin \mathbb{R}$.
				
				To generalize the concept of adjoint operators in octonionic Hilbert spaces, we have to introduce the {notion of} regular composition.

				\begin{mydef}[Regular Composition]
					\label{def:regular_composition}
			{Let $M, M'$ be right $\mathbb{O}$-modules and $M''$ be an $\mathbb{O}$-bimodule and let} $f: M' \to M''$ para-linear and $g: M \to M'$ real-linear. The \textit{regular composition} $f \circledcirc g: M \to M''$ is the unique para-linear map (determined by Lemma \ref{cor: A_p(x,f)=0 and f(px)=pf(x)})  satisfying:
					\begin{equation}
						\label{eq:regular_comp}
						\operatorname{Re}\big((f \circledcirc g)(x)\big) = \operatorname{Re}\big(f(g(x))\big), \quad \forall x \in M.
					\end{equation}
					Equivalently  {(by Theorem \ref{thm:para_linear_char})}:
					\[
					(f \circledcirc g)(x) = \sum_{i=0}^7 e_i \operatorname{Re}\big(f(g(x\overline{e}_i))\big).
					\]
				\end{mydef}
				
				\begin{mydef}[ {Composition} Associator]
					\label{def:associator}
					The \textit{associator} for regular composition is {defined by}:
 				\[
					 {	[ f, g,x]_{\circledcirc} }:= (f \circledcirc g)(x) - f(g(x)).
					\]
					 {	If there are no confusions, we shall omit the subscript $\circledcirc$.}
				\end{mydef}

\begin{lemma}[Composition associator formula]
\label{lem:associator_formula}
Let \(M\) be a right \(\mathbb{O}\)-module and \(M'\) an \(\mathbb{O}\)-bimodule. Fix the standard basis \(\{e_{0},e_{1},\dots,e_{7}\}\) of \(\mathbb{O}\) with \(e_{0}=1\). For \(g\in \operatorname{Hom}_{\mathbb{R}}(M,M')\) and \(f\in \operatorname{Hom}_{\mathbb{R}}(M',M')\), the composition associator \([f,g,x]\) (see Definition~\ref{def:associator}) satisfies, for every \(x\in M\),
\[
[f,g,x]\;=\;\sum_{i=0}^{7} e_{i}\;\operatorname{Re}\!\big(f(B_{e_{i}}(g,x))\big).
\]
\end{lemma}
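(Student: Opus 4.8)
The plan is to start from the explicit formula for the regular composition given in Definition~\ref{def:regular_composition}, namely
\[
(f\circledcirc g)(x)=\sum_{i=0}^{7}e_{i}\,\operatorname{Re}\big(f(g(x\overline{e}_{i}))\big),
\]
and then expand the claimed right-hand side using $B_{e_{i}}(g,x)=g(x)e_{i}-g(xe_{i})$ together with the $\mathbb{R}$-linearity of both $f$ and $\operatorname{Re}$. This rewrites the target formula as $\sum_{i}e_{i}\operatorname{Re}(f(g(x)e_{i}))-\sum_{i}e_{i}\operatorname{Re}(f(g(xe_{i})))$, reducing the whole identity to comparing expressions built from the three families $\operatorname{Re}(f(g(xe_{i})))$, $\operatorname{Re}(f(g(x)e_{i}))$ and $f(g(x))$.

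First I would exploit the $\mathbb{R}$-linearity of $g$ and the sign pattern $\overline{e}_{i}=-e_{i}$ for $i\ge 1$, $\overline{e}_{0}=1$: this gives $g(x\overline{e}_{i})=-g(xe_{i})$ for $i\ge1$, while the $i=0$ term contributes $\operatorname{Re}(f(g(x)))$. Writing $y:=g(x)$, I can thereby rewrite $\sum_{i}e_{i}\operatorname{Re}(f(g(x\overline{e}_{i})))$ as $\operatorname{Re}(f(y))-\sum_{i=1}^{7}e_{i}\operatorname{Re}(f(g(xe_{i})))$, and split the sum $\sum_{i}e_{i}\operatorname{Re}(f(g(xe_{i})))$ on the other side into its $i=0$ and $i\ge1$ parts. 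The purpose of this bookkeeping is that every term containing $g(xe_{i})$ cancels between the two sides, so the difference between the associator $[f,g,x]$ and the claimed formula collapses to a statement involving only the single element $y=g(x)$, namely
\[
2\operatorname{Re}(f(y))-f(y)-\sum_{i=0}^{7}e_{i}\,\operatorname{Re}\big(f(ye_{i})\big)=0.
\]

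The crux of the argument, and the only place where para-linearity of $f$ enters, is this reconstruction identity. To prove it I would invoke Theorem~\ref{thm:para_linear_char}: writing $f(y)=f_{\mathbb{R}}(y)+\sum_{k=1}^{7}f_{k}(y)e_{k}$ with $f_{k}(y)\in\operatorname{Re}M'$, part (2) of that theorem gives $f_{k}(y)=-f_{\mathbb{R}}(ye_{k})$, while $\operatorname{Re}(f(z))=f_{\mathbb{R}}(z)$ for every $z$. Since each $f_{k}(y)$ lies in $\operatorname{Re}M'=\mathscr{Z}(M')$ (Proposition~\ref{prop:real_part}) and hence commutes with the $e_{i}$, one computes $\operatorname{Re}(f(ye_{0}))=f_{\mathbb{R}}(y)$ and $\operatorname{Re}(f(ye_{i}))=-f_{i}(y)$ for $i\ge1$, so that
\[
\sum_{i=0}^{7}e_{i}\operatorname{Re}(f(ye_{i}))=f_{\mathbb{R}}(y)-\sum_{i=1}^{7}f_{i}(y)e_{i}=2f_{\mathbb{R}}(y)-f(y)=2\operatorname{Re}(f(y))-f(y).
\]
Substituting this into the displayed identity makes it vanish, which completes the proof.

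A remark on the hypotheses and on where the difficulty lies: although the statement lists $f\in\operatorname{Hom}_{\mathbb{R}}(M',M')$, the associator $[f,g,x]$ is only defined once $f\circledcirc g$ exists, so $f$ must in fact be para-linear, and it is exactly this para-linearity that powers the reconstruction identity. Everything preceding that step is routine $\mathbb{R}$-linear manipulation, the only subtleties being the sign pattern from $\overline{e}_{i}=-e_{i}$ and the centrality bookkeeping for elements of $\operatorname{Re}M'$. I therefore expect the reconstruction identity to be the main obstacle, since it is the step that genuinely encodes the compatibility between the real-part projection and the para-linear structure of $f$.
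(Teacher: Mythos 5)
Your proposal is correct and is essentially the argument of the paper: both proofs reduce the identity to the reconstruction of the para-linear map $f$ from its real part via Theorem~\ref{thm:para_linear_char}(2), the paper by writing $f(g(x))=\sum_{i}e_{i}\operatorname{Re}(f(g(x)\overline{e}_{i}))$ and subtracting it from the defining expansion of $(f\circledcirc g)(x)$, you by expanding the right-hand side and cancelling---the two computations are the same up to rearrangement. Your remark that $f$ must in fact be para-linear for $f\circledcirc g$ (and hence the associator) to be defined is also accurate.
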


				\begin{proof}
					From Definition \ref{def:regular_composition} and Theorem \ref{thm:para_linear_char},
					\begin{align*}
						(f \circledcirc g)(x) &= \sum_{i=0}^7 e_i \operatorname{Re}\big((f \circledcirc g)(x\overline{e}_i)\big) \\
						f(g(x)) &= \sum_{i=0}^7 e_i \operatorname{Re}\big(f(g(x)\overline{e}_i)\big).
					\end{align*}
					Thus:
				\[
					 {	[f,g,x]} = \sum_{i=0}^7 e_i \operatorname{Re}\Big( f\big(g(x\overline{e}_i)\big) - f\big(g(x)\overline{e}_i\big) \Big) = \sum_{i=0}^7 e_i \operatorname{Re}\big(f(B_{e_i}(g,x))\big).
					\]
				\end{proof}

{When the associator vanishes, the regular composition is just the ordinary composition.}
				\begin{rem}
					The space $\mathscr{B}_{\mathcal{RO}}(H)$ is closed under $\circledcirc$; {this fact is} the algebraic foundation for Section \ref{sec:banach_algebra}.

					We remark that the definition of regular composition of para-linear mappings  has first been   given in \cite{huoqinghai2020nonass}. The definition in \cite{huoqinghai2020nonass} only requires   the   mappings  to be real linear.
					Nevertheless Definition \ref{def:regular_composition} is a more straightforward way to  understand  the notion of regular composition in the case that the  map  $f$ is para-linear. The above lemma {shows} that  these two definitions coincide when  the  map  $f$ is para-linear. {As a consequence,}
					 the results derived in \cite{huoqinghai2020nonass} can be  directly applicable in this case.
					
					 	 {We point out that the symbol of associators defined in Definition \ref{def:associator} is different from that in \cite{huoqinghai2020nonass} for right para-linear mappings. We change the symbol to preserve the order so that in  form we have $$[a,b,c]=(ab)c-a(bc)$$ for the notation of associators.}
				\end{rem}

				\begin{lemma}[Vanishing   {of Composition}  Associator]
					\label{thm:vanishing_associator}
					Let all the modules in Definition \ref{def:regular_composition} be $\O$-bimodules and let $f,g$ be para-linear maps.
					The associator  {$[f,g,x] = 0$} and $(f \circledcirc g)(x) = f(g(x))$ under any of
{the following conditions}:
					\begin{enumerate}
						\item $g$ is $\mathbb{O}$-linear, i.e., $g(xp) = g(x)p$  for any $p\in \O$;
						\item  $x\in \re M$;
						\item  $f$ is $\mathbb{O}$-linear.
					\end{enumerate}
				\end{lemma}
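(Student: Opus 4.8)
The plan is to reduce all three cases to the explicit formula of Lemma~\ref{lem:associator_formula}, namely $[f,g,x] = \sum_{i=0}^{7} e_i \operatorname{Re}\big(f(B_{e_i}(g,x))\big)$, and then to show case by case that every summand vanishes. This converts the vanishing of the composition associator into the problem of controlling the second right associators $B_{e_i}(g,x) = g(x)e_i - g(xe_i)$, or the real parts of their images under $f$.

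Case (1) is immediate: if $g$ is $\mathbb{O}$-linear then $g(xe_i) = g(x)e_i$, so $B_{e_i}(g,x) = 0$ for every $i$ and the sum collapses to $0$. For case (2), I would invoke Theorem~\ref{thm:para_linear_char}(3), which gives $B_{e_i}(g,x) = \sum_{j=1}^{7} g_{\mathbb{R}}\big([x,e_i,e_j]\big)e_j$. When $x \in \operatorname{Re} M$, the bimodule axioms force every associator running through $x$ to vanish: since the left, middle and right associators coincide cyclically, $[x,e_i,e_j] = [e_i,e_j,x] = 0$ by the very definition of $\operatorname{Re} M$. Hence each $B_{e_i}(g,x) = 0$ and the associator vanishes.

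Case (3) is the substantive one and the main obstacle, since here $g$ need not be $\mathbb{O}$-linear and the $B_{e_i}(g,x)$ do not vanish; the task is to show $\operatorname{Re}\big(f(B_{e_i}(g,x))\big) = 0$ when $f$ is $\mathbb{O}$-linear. The key sub-claim I would establish first is that a right $\mathbb{O}$-linear map preserves the real part, i.e. $f(\operatorname{Re} M') \subseteq \operatorname{Re} M'$. This follows because, for $a \in \operatorname{Re} M'$ and basis elements $e_j, e_k$, right linearity together with the centrality of $a$ gives $(f(a)e_j)e_k = f\big((ae_j)e_k\big) = f\big(a(e_je_k)\big) = f(a)(e_je_k)$, so $[f(a),e_j,e_k] = 0$ for all $j,k$; by bilinearity of the associator and the characterization of $\operatorname{Re} M'$ as the center, this means $f(a) \in \operatorname{Re} M'$.

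Granting this sub-claim, I would write $B_{e_i}(g,x) = \sum_{j=1}^{7} a_{ij}\,e_j$ with $a_{ij} = g_{\mathbb{R}}\big([x,e_i,e_j]\big) \in \operatorname{Re} M'$, and apply $f$ using $\mathbb{O}$-linearity to obtain $f(B_{e_i}(g,x)) = \sum_{j} f(a_{ij})\,e_j$ where each $f(a_{ij}) \in \operatorname{Re} M'$. Since a real-part element is central and $\operatorname{Re}(e_j) = 0$ for $j \geq 1$, every term satisfies $\operatorname{Re}\big(f(a_{ij})e_j\big) = 0$, whence $\operatorname{Re}\big(f(B_{e_i}(g,x))\big) = 0$ and $[f,g,x] = 0$. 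Alternatively, the same computation shows directly that $B_p(f\circ g,x) = f(B_p(g,x))$ has vanishing real part, so $f\circ g$ is already para-linear; the uniqueness in Lemma~\ref{cor: A_p(x,f)=0 and f(px)=pf(x)} then identifies $f \circledcirc g$ with the ordinary composition $f \circ g$, giving the vanishing of the associator at once. I expect the only genuine difficulty to lie in the sub-claim of case (3); cases (1) and (2) are formal consequences of the associator formula.
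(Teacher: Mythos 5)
Your proposal is correct, and cases (1) and (2) coincide with the paper's argument: both reduce to the formula $[f,g,x]=\sum_{i}e_i\operatorname{Re}\bigl(f(B_{e_i}(g,x))\bigr)$ of Lemma~\ref{lem:associator_formula} and kill each summand, using $B_{e_i}(g,x)=0$ in case (1) and Theorem~\ref{thm:para_linear_char}(3) together with $[x,p,q]=0$ for $x\in\operatorname{Re}M$ in case (2). The only divergence is in case (3). The paper invokes Remark 3.2 of \cite{huo2024aacasubmod} to upgrade right $\mathbb{O}$-linearity of $f$ to two-sided linearity, deduces from the explicit formula \eqref{eq:real_part_formula} that $f$ commutes with $\operatorname{Re}$, and concludes $\operatorname{Re}f(B_{e_i}(g,x))=f(\operatorname{Re}B_{e_i}(g,x))=0$ directly from the para-linearity of $g$. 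You instead prove the sub-claim $f(\operatorname{Re}M')\subseteq\operatorname{Re}M''$ from right linearity alone (via $[f(a),e_j,e_k]=0$), expand $B_{e_i}(g,x)=\sum_{j\geq 1}a_{ij}e_j$ with $a_{ij}\in\operatorname{Re}M'$ using Theorem~\ref{thm:para_linear_char}(3), and observe that each $\operatorname{Re}\bigl(f(a_{ij})e_j\bigr)=0$ because the index $j$ starts at $1$. The two routes are equivalent in substance --- your preservation of the real-part submodule is the qualitative content of the paper's commutation identity --- but yours is self-contained, avoiding the external citation, at the cost of relying on the term-by-term expansion rather than on the single identity $\operatorname{Re}\circ f=f\circ\operatorname{Re}$. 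Your closing alternative, showing $B_p(f\circ g,x)=f(B_p(g,x))$ has vanishing real part so that $f\circ g$ is itself para-linear and coincides with $f\circledcirc g$ by Lemma~\ref{cor: A_p(x,f)=0 and f(px)=pf(x)}, is also sound and arguably cleaner.
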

				
				\begin{proof}
					In case (1) {holds}, $B_p(g,x) = 0$, so Lemma \ref{lem:associator_formula} gives {$[f,g,x] = 0$}. Case (2) follows from part (3) of Theorem \ref{thm:para_linear_char}. {Finally, if}  $f$ is $\mathbb{O}$-linear, then  {{by Remark 3.2 in \cite{huo2024aacasubmod},}} one has $$f(px)=pf(x),\qquad f(xp)=f(x)p$$ for all $p\in \O$. In view of  identity {\eqref{eq:real_part_formula}} $f$ {commutes} with the real part operator $\re$ and hence
					$$\re f(B_{e_i}(g,x))=f(\re B_{e_i}(g,x))=0.$$
					It  follows from Lemma \ref{lem:associator_formula}  that {$[f,g,x] = 0$} and this concludes the proof.
				\end{proof}

{We now} collect some properties of the left multiplication operators in an $\O$-bimodule $M$
				\begin{eqnarray*}
					L_p:M&\to& M\\
					x&\mapsto &px.
				\end{eqnarray*}
			 It is easy to check that $L_p$ is a right para-linear operator. {With arguments similar to those in \cite[Lemmas 3.31 and   3.32]{huoqinghai2020nonass}, one can prove} the properties in the following lemma {in which we use the notation $\odot$ defined in \eqref{eq:p f}}.
				\begin{lemma}[\cite{huoqinghai2020nonass}]\label{lem:Rp prop 2}
					For all $p\in \spo,\, x\in M$ and $ f\in \Hom_\mathcal{RO}(M,M)$, we have
					\begin{eqnarray}
					 {	[L_p,f,x]}&=&B_p(f,x);\\
						L_p\circledcirc f&=&p\odot f;\\
					 {	{[}f,L_p,x{]}	}	&=&-B_p(f,x);\\
						f \circledcirc L_p	&=&p\odot			f.
					\end{eqnarray}
				\end{lemma}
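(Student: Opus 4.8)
The plan is to avoid all direct octonionic bookkeeping and instead exploit the characterizing property of the regular composition: by Definition~\ref{def:regular_composition}, $L_p\circledcirc f$ and $f\circledcirc L_p$ are the \emph{unique} para-linear maps whose real parts agree with those of the ordinary compositions $x\mapsto p\,f(x)$ and $x\mapsto f(px)$ respectively. The entire argument hinges on one observation: since $f\in\operatorname{Hom}_{\mathcal{RO}}(M,M)$ is para-linear, Definition~\ref{def:para_linear} gives $\operatorname{Re}B_p(f,x)=0$ for all $p\in\mathbb{O}$ and $x\in M$. Hence the correction term $B_p(f,x)$ that appears in the bimodule products of Theorem~\ref{thm:para_bimodule} is annihilated by the real part operator, and this is exactly what lets me identify the regular compositions with the $\odot$-products.

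First I would isolate the uniqueness tool. If $g,h$ are para-linear and $\operatorname{Re}g(x)=\operatorname{Re}h(x)$ for every $x$, then $g-h$ is again para-linear with vanishing real component $(g-h)_{\mathbb{R}}=0$, so Lemma~\ref{cor: A_p(x,f)=0 and f(px)=pf(x)}(1) forces $g=h$. All four identities will then be deduced from this principle together with the definitions of $\circledcirc$, $\odot$, and the composition associator.

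Next I would prove the two composition identities. Both $L_p\circledcirc f$ and $p\odot f$ are para-linear (the former by Definition~\ref{def:regular_composition}, the latter by Theorem~\ref{thm:para_bimodule}), and their real parts coincide, since
\[
\operatorname{Re}\big((L_p\circledcirc f)(x)\big)=\operatorname{Re}\big(p\,f(x)\big)=\operatorname{Re}\big(p\,f(x)\big)+\operatorname{Re}B_p(f,x)=\operatorname{Re}\big((p\odot f)(x)\big)
\]
by \eqref{eq:regular_comp} and para-linearity of $f$. The uniqueness principle then yields $L_p\circledcirc f=p\odot f$. The companion identity follows by the identical comparison, now matching $\operatorname{Re}\big((f\circledcirc L_p)(x)\big)=\operatorname{Re}\big(f(px)\big)$ against $\operatorname{Re}\big((f\odot p)(x)\big)=\operatorname{Re}\big(f(px)\big)-\operatorname{Re}B_p(f,x)=\operatorname{Re}\big(f(px)\big)$, so that $f\circledcirc L_p$ is identified with the right $\odot$-product $f\odot p$ of Theorem~\ref{thm:para_bimodule}.

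Finally, the two associator identities drop out by unwinding Definition~\ref{def:associator} with the compositions already determined:
\[
[L_p,f,x]=(L_p\circledcirc f)(x)-L_p\big(f(x)\big)=(p\odot f)(x)-p\,f(x)=B_p(f,x),
\]
and symmetrically $[f,L_p,x]=(f\odot p)(x)-f(px)=-B_p(f,x)$. I do not expect a serious technical obstacle: the one genuinely conceptual step is the opening observation that para-linearity of $f$ renders $B_p(f,x)$ real-part-free, which is what collapses $\circledcirc$ onto $\odot$. A purely computational alternative would instead start from Lemma~\ref{lem:associator_formula}, writing $[L_p,f,x]=\sum_{i=0}^{7}e_i\operatorname{Re}\big(p\,B_{e_i}(f,x)\big)$ and $[f,L_p,x]=\sum_{i=0}^{7}e_i\operatorname{Re}\big(f([p,x,e_i])\big)$ and simplifying through Theorem~\ref{thm:para_linear_char}(3); this also works but demands more delicate octonionic accounting, so I would keep the uniqueness route as the main line.
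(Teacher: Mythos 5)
Your argument is correct, and it is worth noting that the paper itself supplies no proof of this lemma: it is quoted from \cite{huoqinghai2020nonass} with only the remark that one can argue as in Lemmas 3.31--3.32 there. Your route is exactly the right one given the toolkit of this paper: both $L_p\circledcirc f$ and $p\odot f$ are right para-linear (the former by Definition~\ref{def:regular_composition}, since $L_p$ is para-linear and $f$ is real-linear; the latter by Theorem~\ref{thm:para_bimodule}), their real parts agree because $\operatorname{Re}B_p(f,x)=0$ for para-linear $f$, and the uniqueness clause built into Definition~\ref{def:regular_composition} (equivalently, Lemma~\ref{cor: A_p(x,f)=0 and f(px)=pf(x)}(1) applied to the difference) forces equality; the associator identities then fall out of Definition~\ref{def:associator} by direct substitution. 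Your reduction of the uniqueness principle to the vanishing of $(g-h)_{\mathbb{R}}$ is legitimate, since for the decomposition of Theorem~\ref{thm:para_linear_char} one has $f_{\mathbb{R}}(x)=\operatorname{Re}(f(x))$.

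One point you should make explicit rather than pass over silently: what you actually prove for the fourth identity is $f\circledcirc L_p=f\odot p$, whereas the lemma as printed asserts $f\circledcirc L_p=p\odot f$. These differ in general (already for $f=L_q$ with non-commuting $p,q$). Your version is the internally consistent one: it is the only one compatible with the third identity ${[}f,L_p,x{]}=-B_p(f,x)$ via $(f\circledcirc L_p)(x)=f(px)-B_p(f,x)=(f\odot p)(x)$, and it matches the way the lemma is invoked in the proof of Theorem~\ref{thm:OBanach}, where $T\circledcirc L_p=T\odot p$ is used. So the printed right-hand side $p\odot f$ in the last equation is evidently a misprint for $f\odot p$, and your proof establishes the corrected statement; state this correction explicitly so the reader is not left reconciling your conclusion with the displayed formula.
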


				\section{Adjoint Operators via Regular Composition}
				\label{sec:adjoint_operators}
				
				This section resolves the central challenge of defining adjoint operators in octonionic Hilbert spaces. {As we already pointed out, the} traditional approaches fail due {to the fact that} non-associativity {prevents} $\langle T\cdot,\cdot\rangle$ from being para-linear. We overcome this through the \textit{regular composition} $\circledcirc$ and  {then establishing the} properties of the adjoint.
				
				\subsection{Adjoint Operator Definition}
				
				In this subsection, \textbf{let $H_1, H_2$ be Hilbert $\mathbb{O}$-bimodules and $T \in \mathscr{B}_{\mathbb{R}}(H_1, H_2)$ be a real linear operator.}
				
				For  $y \in H_2$, define the para-linear functional
				\[
				f_y(x) := (\langle \cdot, y \rangle \circledcirc T)(x)
				\]
{where, for simplicity, in the notation we omit the dependence on $T$.}
				By the Riesz Representation Theorem (Theorem  \ref{thm:riesz}), there exists a unique $ \hat{y} \in H_1$ such that $f_y(x) = \langle x, \hat{y} \rangle$.
				
				\begin{mydef}[Adjoint Operator]
					\label{def:adjoint_operator}
					The \textit{adjoint} of $T$ is $T^*: H_2 \to H_1$ mapping $y \mapsto \hat{y}$, characterized by
				\begin{equation}
						\label{eq:adjoint_def}
						\langle x, T^*y \rangle = \langle Tx, y \rangle +	 { [y, T,x]},
					\end{equation}
					where the \textit{triple associator} is
				\begin{equation}\label{eq:[yTx]}
						 {	[y,T,x]} :=[\langle \cdot, y \rangle,T,x] =\sum_{i=1}^7 e_i\fx{y}{B_{e_i}(T,x)}_{\spr}.
					\end{equation}
				\end{mydef}
				
				\begin{rem}\label{rem:T*=T*R}
					
					From \eqref{eq:adjoint_def} and \eqref{eq:[yTx]}, it can be seen that the adjoint $T^*$ coincides with the real adjoint $T^{*_{\mathbb{R}}}$ {(namely the adjoint of $T$ in the real Hilbert space $(H, \langle \cdot, \cdot \rangle_{\mathbb{R}})$)} under $\langle \cdot, \cdot \rangle_{\mathbb{R}}$:
					\[
					\langle x, T^*y \rangle_{\mathbb{R}} = \langle Tx, y \rangle_{\mathbb{R}}, \quad \forall x \in H_1, y \in H_2.
					\]
					Thus
					\begin{enumerate}
						\item $(S + T)^* = S^* + T^*$;
						\item $T^{**} = T$;
						\item $\|T^*\| = \|T\|$.
					\end{enumerate}
				\end{rem}

				In view of the definition of the associator \eqref{eq:[yTx]}, we offer a reformulation of Lemma \ref{thm:vanishing_associator}.
				\begin{lemma}[Vanishing Associator]
					\label{lem:vanishing_associator}
					If $T \in \mathscr{B}_{\mathcal{RO}}(H_1, H_2)$, then  {$[y,T,x] = 0$} when:
					\begin{enumerate}
						\item $T$ is $\mathbb{O}$-linear;
						\item $x \in \operatorname{Re} H_1$;
						\item $y \in \operatorname{Re} H_2$.
					\end{enumerate}
				\end{lemma}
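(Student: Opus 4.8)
The plan is to argue from the closed form of the triple associator recorded in \eqref{eq:[yTx]}, namely
\[
[y,T,x] \;=\; \sum_{i=1}^{7} e_i\,\langle y, B_{e_i}(T,x)\rangle_{\mathbb{R}},
\]
so that it suffices to show that every summand $\langle y, B_{e_i}(T,x)\rangle_{\mathbb{R}}$ vanishes under each of the three hypotheses. Cases (1) and (2) are the direct transcriptions of Lemma~\ref{thm:vanishing_associator}(1)--(2) with $g = T$: in both situations one shows that the second associator $B_{e_i}(T,x)$ itself vanishes, after which the conclusion is immediate. For (1), $\mathbb{O}$-linearity of $T$ gives $T(xe_i) = T(x)e_i$, so $B_{e_i}(T,x) = T(x)e_i - T(xe_i) = 0$. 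For (2), I would invoke part (3) of Theorem~\ref{thm:para_linear_char}, which expresses $B_{e_i}(T,x)$ through the associators $[x,e_i,e_j]$; since $x \in \operatorname{Re} H_1$ lies in the center, every such associator vanishes by the bimodule antisymmetry, whence $B_{e_i}(T,x) = 0$.

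Case (3) is the one requiring a genuinely different idea, and it is where I expect the only real subtlety. Here $B_{e_i}(T,x)$ need not vanish, so I would instead exploit the reality of $y$ together with the self-adjointness of the real-part operator (Lemma~\ref{lem:re=re*}(1)). Since $y \in \operatorname{Re} H_2$ satisfies $\operatorname{Re} y = y$, that lemma yields
\[
\langle y, v\rangle_{\mathbb{R}} \;=\; \langle \operatorname{Re} y, v\rangle_{\mathbb{R}} \;=\; \langle y, \operatorname{Re} v\rangle_{\mathbb{R}}, \qquad v \in H_2 .
\]
Taking $v = B_{e_i}(T,x)$ moves the real part onto the second associator of $T$, and this is precisely where para-linearity is used: by Definition~\ref{def:para_linear}, membership $T \in \mathscr{B}_{\mathcal{RO}}(H_1,H_2)$ means $\operatorname{Re} B_{e_i}(T,x) = 0$ for all $i$ and all $x$. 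Consequently $\langle y, B_{e_i}(T,x)\rangle_{\mathbb{R}} = \langle y, \operatorname{Re} B_{e_i}(T,x)\rangle_{\mathbb{R}} = 0$, and summing over $i$ gives $[y,T,x] = 0$.

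The main obstacle is conceptual rather than computational: case (3) is \emph{not} a mechanical specialization of case (3) of Lemma~\ref{thm:vanishing_associator}, because $y \in \operatorname{Re} H_2$ does not make the functional $\langle\cdot,y\rangle$ into an $\mathbb{O}$-linear (or even para-linear) map, so the outer-slot criterion cannot be invoked. Recognizing that the correct leverage is instead the self-adjointness of $\operatorname{Re}$ paired with the defining vanishing of $\operatorname{Re} B_{e_i}(T,x)$ is the crux. The residual bookkeeping---that the $i=0$ term is absent because $B_{1}(T,x)=0$, and that $\langle u,v\rangle_{\mathbb{R}}=\langle v,u\rangle_{\mathbb{R}}$ by $\mathbb{O}$-Hermiticity so the order of the two arguments is immaterial---is routine and I would not dwell on it.
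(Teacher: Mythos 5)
Your proof is correct, and it is worth recording how it relates to the paper's treatment. The paper gives no explicit argument for this lemma: it simply declares it a ``reformulation'' of Lemma~\ref{thm:vanishing_associator} under the substitution $f=\langle\cdot,y\rangle$, $g=T$. Your cases (1) and (2) coincide with that route (in (2) you even cite the same ingredient, Theorem~\ref{thm:para_linear_char}(3), that the paper's earlier proof uses). For case (3) you depart from the paper, and rightly so: your observation that $y\in\operatorname{Re}H_2$ does \emph{not} make $v\mapsto\langle v,y\rangle$ an $\mathbb{O}$-linear map is accurate --- by Lemma~\ref{lem:inner_product}(3),(4) one has $\langle vp,y\rangle=\overline{p}\,\langle v,y\rangle$ rather than $\langle v,y\rangle p$ --- so case (3) of Lemma~\ref{thm:vanishing_associator} does not literally apply, and the paper's implicit reduction glosses over this. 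Your fix, moving $\operatorname{Re}$ across the $\mathbb{R}$-inner product via Lemma~\ref{lem:re=re*}(1) and then invoking $\operatorname{Re}B_{e_i}(T,x)=0$ from the para-linearity of $T$, is exactly the right mechanism (it is the same computation one would get by noting that $\operatorname{Re}\langle B_{e_i}(T,x),y\rangle=\langle\operatorname{Re}B_{e_i}(T,x),y\rangle_{\mathbb{R}}$ for real $y$), and it makes the hypothesis $T\in\mathscr{B}_{\mathcal{RO}}$ visibly essential in case (3), which the paper's phrasing hides. The only trade-off is that your argument is tied to the inner-product realization of the outer slot via \eqref{eq:[yTx]}, whereas the paper's Lemma~\ref{thm:vanishing_associator} is a statement about general regular compositions; for the present lemma that costs nothing.
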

								\subsection{Core Properties}
				
				To further understand the nature of the adjoint operator $T^*$, {it is crucial to understand the relation between the associators related to $T$ and $T^*$.}

				\begin{lemma}[Associator Symmetry]\label{lem:associator_symmetry}
					{Let $H_1$, $H_2$ be two Hilbert $\O$-bimodules and	$T\in \huabr{H_1}{H_2}$.}
					For any $ x\in H_1$, $y\in H_2$ and any $p\in \O$,  the following relations hold:
					 {	\begin{eqnarray}
					{[y,T,x]}&=&{[x,T^*,y]};\label{eq:dual[x,y,T]=[y,x,T*]}\\
						\fx{y}{B_{p}(T,x)}_{\spr}&=&\fx{x}{B_{p}(T^*,y)}_{\spr}	.\label{eq:dual re<x,Ap(yT*)>=re<Ap(xT),y>}
					\end{eqnarray}}
				\end{lemma}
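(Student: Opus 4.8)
The plan is to establish the two identities in the reverse of the order in which they are stated: I would first prove the scalar identity \eqref{eq:dual re<x,Ap(yT*)>=re<Ap(xT),y>}, and then read off the triple-associator identity \eqref{eq:dual[x,y,T]=[y,x,T*]} from it by specialization to the basis $e_i$. The whole argument lives inside the real inner product $\fx{\cdot}{\cdot}_\spr$, so the key structural input is Remark~\ref{rem:T*=T*R}: because $T^{*}$ coincides with the real adjoint, the relation $\fx{Tu}{v}_\spr = \fx{u}{T^{*}v}_\spr$ holds \emph{with no correction term}, which is exactly what lets the octonionic complications collapse into ordinary real bookkeeping.

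For the scalar identity I would expand $B_p(T,x) = T(x)p - T(xp)$ and handle the two resulting real inner products separately, transferring the scalar $p$ across the pairing by Lemma~\ref{lem:inner_product}(2) and the symmetry of $\fx{\cdot}{\cdot}_\spr$, then moving $T$ onto $T^{*}$ by the correction-free adjoint relation. Concretely this yields $\fx{y}{T(x)p}_\spr = \fx{Tx}{y\overline p}_\spr = \fx{x}{T^{*}(y\overline p)}_\spr$ and $\fx{y}{T(xp)}_\spr = \fx{xp}{T^{*}y}_\spr = \fx{x}{(T^{*}y)\overline p}_\spr$, whence
\[
\fx{y}{B_p(T,x)}_\spr = \fx{x}{\,T^{*}(y\overline p) - (T^{*}y)\overline p\,}_\spr = -\,\fx{x}{B_{\overline p}(T^{*},y)}_\spr.
\]
To remove the conjugate and the sign I would note that $B_p(T,\cdot)$ is $\spr$-linear in $p$ and vanishes for real $p$ (as $T$ is real linear), so it depends only on the imaginary part of $p$; since $\overline p$ and $p$ have opposite imaginary parts, $B_{\overline p} = -B_p$. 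This turns the right-hand side into $\fx{x}{B_p(T^{*},y)}_\spr$, which is \eqref{eq:dual re<x,Ap(yT*)>=re<Ap(xT),y>}.

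The triple-associator identity then follows immediately: substituting $p = e_i$ into the scalar identity and inserting into the definition \eqref{eq:[yTx]} gives
\[
[y,T,x] = \sum_{i=1}^{7} e_i\,\fx{y}{B_{e_i}(T,x)}_\spr = \sum_{i=1}^{7} e_i\,\fx{x}{B_{e_i}(T^{*},y)}_\spr = [x,T^{*},y].
\]
The only delicate point is the sign and conjugation bookkeeping: one must apply the scalar-transfer rule in the right direction and recognize that the correction-free form of the adjoint is precisely what eliminates every associator term along the way. I do not expect a genuine structural obstacle here, since the non-associativity has already been absorbed into the definition of $T^{*}$ through the regular composition; once Remark~\ref{rem:T*=T*R} is in hand, the computation is essentially linear algebra over $\spr$.
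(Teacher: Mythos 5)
Your proposal is correct, but it runs the argument in the opposite direction from the paper. The paper first proves the associator identity \eqref{eq:dual[x,y,T]=[y,x,T*]} directly: starting from $[y,T,x]=\langle x,T^*y\rangle-\langle Tx,y\rangle$, it inserts $T=T^{**}$, applies the Hermitian symmetry $\langle u,v\rangle=\overline{\langle v,u\rangle}$, and uses the fact that composition associators are purely imaginary to conclude $[y,T,x]=-\overline{[x,T^*,y]}=[x,T^*,y]$; the scalar identity \eqref{eq:dual re<x,Ap(yT*)>=re<Ap(xT),y>} is then extracted by comparing the $e_i$-components of the two sides. You instead prove the scalar identity first by a direct computation entirely inside the real inner product, using the transfer rule $\fx{up}{v}_{\spr}=\fx{u}{v\overline{p}}_{\spr}$ of Lemma~\ref{lem:inner_product}(2), the correction-free real-adjoint relation of Remark~\ref{rem:T*=T*R}, and the elementary observation that $B_{\overline p}=-B_p$ (which you justify from $\mathbb{R}$-linearity in $p$ and vanishing for real $p$; this is the same fact the paper records later as \eqref{eq:Br=-Brbar}), and then you assemble the associator identity from its components via \eqref{eq:[yTx]}. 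Both arguments are valid and of comparable length; the paper's route gets the associator identity in one slick line at the cost of invoking $T^{**}=T$ and the purity of associators, while yours is more self-contained at the level of $\fx{\cdot}{\cdot}_{\spr}$ and makes explicit exactly where each octonionic correction cancels. One small point worth stating explicitly if you write this up: the final step needs $[x,T^*,y]=\sum_{i=1}^{7}e_i\fx{x}{B_{e_i}(T^*,y)}_{\spr}$, which is just Definition~\ref{def:adjoint_operator}/\eqref{eq:[yTx]} applied with $T^*$ in place of $T$ and $x,y$ interchanged, legitimate because $T^*$ is again a bounded real-linear operator.
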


				\begin{proof}
We first prove \eqref{eq:dual[x,y,T]=[y,x,T*]}.
 Using \eqref{eq:adjoint_def}, \eqref{eq:[yTx]}, and Remark~\ref{rem:T*=T*R}, for all \(x,y\in H\) we compute
\begin{align*}
[y,T,x]
&= \langle x, T^{*}y\rangle - \langle Tx, y\rangle \\
&= \langle x, T^{*}y\rangle - \langle T^{**}x, y\rangle \\
&= -\,\overline{\langle y, T^{**}x\rangle - \langle T^{*}y, x\rangle} \\
&= -\,\overline{[x,T^{*},y]} \\
&= [x,T^{*},y],
\end{align*}
 {where in the last step we use that associators have vanishing real part, hence \(-\overline{[x,T^{*},y]}=[x,T^{*},y]\).
The second equality relies on the boundedness of \(T\), which ensures that the double adjoint \(T^{**}\) coincides with \(T\).
 }
					
					We next prove \eqref{eq:dual re<x,Ap(yT*)>=re<Ap(xT),y>}. By synthesizing \eqref{eq:dual[x,y,T]=[y,x,T*]} and \eqref{eq:[yTx]}, we obtain
					\begin{align*}
						\sum_{i=1}^7 e_i\fx{y}{B_{e_i}(T,x)}_{\spr}=\sum_{i=1}^7 e_i\fx{x}{B_{e_i}(T^*,y)}_{\spr}.
					\end{align*}
					This leads us to conclude
					$$\fx{y}{B_{e_i}(T,x)}_{\spr}=\fx{x}{B_{e_i}(T^*,y)}_{\spr}$$ for every
					$i=1,\ldots, 7$. As a result, we get \eqref{eq:dual re<x,Ap(yT*)>=re<Ap(xT),y>}.
				\end{proof}

				As an immediate consequence, we show that $T^*$ is also para-linear when $T$ is para-linear.

				\begin{thm}[Para-Linearity of Adjoint]
					\label{thm:T* is paralinear}
					If $T \in \mathscr{B}_{\mathcal{RO}}(H_1, H_2)$ is a right para-linear operator, then $T^* \in \mathscr{B}_{\mathcal{RO}}(H_2, H_1)$ is also {right} para-linear.
				\end{thm}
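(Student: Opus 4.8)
The plan is to show that $T^{*}$ inherits the two structural properties required for membership in $\mathscr{B}_{\mathcal{RO}}(H_2,H_1)$: boundedness and right para-linearity. Boundedness, together with real-linearity, is immediate from Remark~\ref{rem:T*=T*R}, since there $T^{*}$ is identified with the real Hilbert-space adjoint $T^{*_{\mathbb{R}}}$ and $\|T^{*}\|=\|T\|<\infty$. Thus the entire content of the theorem reduces to verifying the para-linearity condition of Definition~\ref{def:para_linear}, namely
\[
\operatorname{Re} B_{p}(T^{*},y)=0,\qquad \forall\, p\in\mathbb{O},\ y\in H_2,
\]
where $B_{p}(T^{*},y)=T^{*}(y)p-T^{*}(yp)\in H_1$.

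Fix $p\in\mathbb{O}$ and $y\in H_2$, and set $w:=\operatorname{Re} B_{p}(T^{*},y)\in\operatorname{Re} H_1$. By the positivity axiom in Definition~\ref{def:pre_hilbert}, it suffices to prove $\langle w,z\rangle_{\mathbb{R}}=0$ for every $z\in\operatorname{Re} H_1$, since taking $z=w$ then forces $\langle w,w\rangle=0$ and hence $w=0$. For such a $z$ I would first invoke the self-adjointness of the real part operator (Lemma~\ref{lem:re=re*}(1)) together with $\operatorname{Re} z=z$ to rewrite
\[
\langle z,w\rangle_{\mathbb{R}}=\langle z,\operatorname{Re} B_{p}(T^{*},y)\rangle_{\mathbb{R}}=\langle \operatorname{Re} z, B_{p}(T^{*},y)\rangle_{\mathbb{R}}=\langle z,B_{p}(T^{*},y)\rangle_{\mathbb{R}}.
\]

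The decisive step is the associator symmetry of Lemma~\ref{lem:associator_symmetry}: applying \eqref{eq:dual re<x,Ap(yT*)>=re<Ap(xT),y>} with the test vector $z$ in place of $x$ gives
\[
\langle z,B_{p}(T^{*},y)\rangle_{\mathbb{R}}=\langle y,B_{p}(T,z)\rangle_{\mathbb{R}}.
\]
Now $z$ lies in $\operatorname{Re} H_1=\mathscr{Z}(H_1)$, so every module associator $[z,p,e_{i}]$ vanishes; by the explicit formula in Theorem~\ref{thm:para_linear_char}(3) applied to the para-linear operator $T$, this yields $B_{p}(T,z)=0$ outright, not merely $\operatorname{Re} B_{p}(T,z)=0$. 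Hence $\langle y,B_{p}(T,z)\rangle_{\mathbb{R}}=0$, so $\langle w,z\rangle_{\mathbb{R}}=0$ for all $z\in\operatorname{Re} H_1$, and the nondegeneracy argument above completes the proof.

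I expect the main obstacle to be conceptual rather than computational: one must resist using para-linearity of $T$ directly in the form $\operatorname{Re} B_{p}(T,x)=0$, which is too weak here, since $\langle y,B_{p}(T,x)\rangle_{\mathbb{R}}$ need not vanish for a general imaginary element $B_{p}(T,x)$. The key idea is to transfer the second associator of $T^{*}$ onto that of $T$ through \eqref{eq:dual re<x,Ap(yT*)>=re<Ap(xT),y>}, and then to restrict the pairing to central test vectors $z\in\operatorname{Re} H_1$, on which the second associator of $T$ vanishes \emph{identically}; this converts the real-part condition on $B_{p}(T^{*},y)$ into an honest vanishing statement that the inner product can detect.
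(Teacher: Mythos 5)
Your proposal is correct and follows essentially the same route as the paper: both arguments reduce the claim to showing that $\operatorname{Re} B_{p}(T^{*},y)$ pairs trivially with $\operatorname{Re}H_1$, using the self-adjointness of $\operatorname{Re}$ (Lemma~\ref{lem:re=re*}(1)) to move the real-part projection onto the test vector, the associator symmetry \eqref{eq:dual re<x,Ap(yT*)>=re<Ap(xT),y>} to transfer to $B_{p}(T,\cdot)$, and the identical vanishing of $B_{p}(T,\cdot)$ on $\operatorname{Re}H_1$. The only cosmetic difference is that the paper tests against arbitrary $x\in H_1$ and concludes by nondegeneracy of $\langle\cdot,\cdot\rangle_{\mathbb{R}}$, whereas you test against $z\in\operatorname{Re}H_1$ and finish by taking $z=w$; these are equivalent.
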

				\begin{proof}
					By {assumption,} $T^*$ is a bounded real linear operator. It suffices to show that $T^*$ is para-linear.	Fix arbitrarily $y\in H_2$ and $p\in \O$.
					For any   $x\in H_1$, we   employ  both   Lemma \ref{lem:re=re*} and relation \eqref{eq:dual re<x,Ap(yT*)>=re<Ap(xT),y>} to get
					$$	\fx{x}{\re B_p(T^*,y)}_{\R} = \fx{\re x}{B_p(T^*,y)}_{\R} = \fx{B_p(T^*,\re x)}{y}_{\R} = 0.$$
	This forces that $\re B_p(T^*,y)=0$, i.e., $T^*$ is a para-linear operator as desired.
				\end{proof}

				\section{The Involutive Banach Algebra of Para-Linear Operators}
				\label{sec:banach_algebra}
				
				This section establishes that bounded para-linear operators on a Hilbert $\mathbb{O}$-bimodule $H$ form an octonionic involutive Banach algebra under regular composition and the adjoint operation, {overcoming the issues due to non-associativity}.
				
				\subsection{Banach $\mathbb{O}$-Bimodule Structure}
				
				Let $H_1$, $H_2$ be two Hilbert  $\O$-bimodules and  $T\in \mathscr{B}_{\mathcal{RO}}(H_1, H_2)$.  The left and right scalar multiplication  is defined by {(see Theorem \ref{thm:para_bimodule})}
				\begin{eqnarray}
										(T\odot  r)(x):&=T(rx)-B_r(T,x)\label{eq:f p};\\
					(r\odot  T)(x):&=rT(x)+B_r(T,x)\label{eq:p f}
				\end{eqnarray}
				for any $x\in H_1$ and $r\in \O$.
				As proved in \cite[Theorem 3.22]{huoqinghai2020nonass},  $\huab{H_1}{H_2}$ is an $\O$-bimodule with respect to the octonionic scalar multiplications \eqref{eq:f p} and \eqref{eq:p f}. The real part of $\huab{H_1}{H_2}$ is exactly the set of all bounded octonionic linear operators $\mathscr{B}_{{\O}}({H_1},H_2)$ (\cite[Theorem 3.23]{huoqinghai2020nonass}), i.e.,
				\begin{eqnarray}\label{eq:re LO=O}
					\re \huab{H_1}{H_2}=\mathscr{B}_{{\O}}({H_1},H_2).
				\end{eqnarray}
				We shall prove that  $\huab{H_1}{H_2}$  is a Banach  $\O$-bimodule. {To this end, we follow  Lemma 5.1 in \cite{huoqinghai2022Riesz}, and we further generalize the Moufang identity for para-linear operators. Reasoning as for the following identities for associators, obtained in \cite[Propositions 3.6,3.15,3.19]{huoqinghai2020nonass} in the case of  left para-linear operators:}				
				 $$A_r(x,T)=-A_{\overline{r}}(x,T),\qquad A_r(\overline{r}x,T)
				=rA_r(x,T)=A_r(x,T)\overline{r},\qquad rA_r(x,T)=A_r(xr,T),$$
				where $T$ is a \textbf{left para-linear map}, $r$ is an arbitrary octonion, $A_r(x,T):=T(rx)-rT(x)$ is the  \textbf{second  left associator}, one can establish the following identities for the \textbf{second right associators}:
				\begin{eqnarray}
					B_r(T,x)&=&-B_{\overline{r}}(T,x),\label{eq:Br=-Brbar}\  \\
					B_r(T,xr)&=&B_r(T,x)\overline{r}\ =\ rB_r(T,x) ,\label{eq:B1}\\
					B_r(T,x)r&=&B_r(T,rx).\label{eq:Brtxr}
				\end{eqnarray}
				
				\begin{lemma}
					
					Let $H_1$, $H_2$ be two Hilbert  $\O$-bimodules and  $T\in \huab{H_1}{H_2}$. Then for any octonion $r\neq 0$,
					we have
					\begin{eqnarray}
						(r\odot  T)(x)&=rT(xr^{-1})r\label{eq:T p};\\
						(T\odot  r)(x)&=T(rxr)r^{-1}\label{eq:p T}
					\end{eqnarray}
					for all $x\in H$.
				\end{lemma}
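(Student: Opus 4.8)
The plan is to start from the defining formulas \eqref{eq:p f} and \eqref{eq:f p} for the two scalar multiplications and to reduce everything to the second-right-associator identities \eqref{eq:B1} and \eqref{eq:Brtxr}, together with the flexibility of the bimodule. The shape of the right-hand sides, where $r$ flanks the argument on both sides, signals that the Moufang-type structure of the $\mathbb{O}$-bimodule is doing the work. The preliminary observation is that for $x$ in an $\mathbb{O}$-bimodule the expression $rxr$ is unambiguous: the cyclic relations give $[r,x,r]=[x,r,r]$, and $[x,r,r]=0$ by right-alternativity, so $(rx)r=r(xr)$. Likewise $(xr^{-1})r=x(r^{-1}r)=x$ and $(yr)r^{-1}=y(rr^{-1})=y$, since $r^{-1}$ lies in the real span of $1$ and $r$ and the right associator is alternating. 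These cancellation rules are exactly what legitimize the final ``division by $r$'' steps.

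For \eqref{eq:T p}, first I would evaluate $r\odot T$ on the shifted argument $xr$. Using \eqref{eq:p f} and then pulling the factor $r$ out of the associator by \eqref{eq:B1},
\[
(r\odot T)(xr)=rT(xr)+B_r(T,xr)=rT(xr)+rB_r(T,x)=r\bigl(T(xr)+B_r(T,x)\bigr).
\]
Since $B_r(T,x)=T(x)r-T(xr)$ by definition, the bracket collapses to $T(x)r$, whence $(r\odot T)(xr)=r\bigl(T(x)r\bigr)$. Replacing $x$ by $xr^{-1}$ and using $(xr^{-1})r=x$ gives $(r\odot T)(x)=r\bigl(T(xr^{-1})r\bigr)$, which by flexibility in $H_2$ equals $rT(xr^{-1})r$.

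For \eqref{eq:p T}, I would instead right-multiply the operator value by $r$ and simplify. From \eqref{eq:f p} and the identity \eqref{eq:Brtxr} in the form $B_r(T,x)r=B_r(T,rx)=T(rx)r-T(rxr)$,
\[
(T\odot r)(x)\,r=\bigl(T(rx)-B_r(T,x)\bigr)r=T(rx)r-\bigl(T(rx)r-T(rxr)\bigr)=T(rxr).
\]
Right-multiplying by $r^{-1}$ and cancelling via right-alternativity, $(yr)r^{-1}=y$, yields $(T\odot r)(x)=T(rxr)r^{-1}$, as claimed.

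The computations themselves are short; the only real difficulty is the non-associative bookkeeping, and this is where I expect the main obstacle to lie. Each rearrangement must be explicitly licensed: the extraction of the left factor $r$ rests on \eqref{eq:B1} and \eqref{eq:Brtxr}, the unambiguous meaning of $rxr$ rests on flexibility $(rx)r=r(xr)$, and the two cancellations of $r$ against $r^{-1}$ rest on right-alternativity in $H_1$ and $H_2$. Once these three facts are in place, both identities follow by the elementary manipulations above.
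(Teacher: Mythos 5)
Your proof is correct and follows essentially the same route as the paper: both arguments reduce the claim to the quoted associator identities \eqref{eq:B1} and \eqref{eq:Brtxr} together with the alternativity/flexibility of the bimodule, and then cancel an $r$ against $r^{-1}$. The only (cosmetic) difference is that the paper first normalizes to $|r|=1$ and works with $\overline{r}=r^{-1}$ via \eqref{eq:Br=-Brbar}, left-multiplying $(r\odot T)(x)$ by $r^{-1}$, whereas you avoid the normalization and the conjugation identity altogether by evaluating $r\odot T$ at the shifted argument $xr$ and by right-multiplying $(T\odot r)(x)$ by $r$, which makes the telescoping slightly more transparent.
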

				\begin{proof}
					\bfs \ $\abs{r}=1$ and hence $r^{-1}=\overline{r}$.
					Let $x\in H_1$.
					It follows from identities \eqref{eq:Br=-Brbar}  and \eqref{eq:B1} that
					\begin{eqnarray*}
						r^{-1}(r\odot  T)(x)&=&r^{-1}(r(Tx)+B_r(T,x))\\
						&=&Tx+\overline{r}B_r(T,x)\\
						&=&Tx-\overline{r}B_{\overline{r}}(T,x)\\
						&=&Tx-B_{\overline{r}}(T,x\overline{r})\\
						&=&Tx+B_{r}(T,x\overline{r})\\
						&=&T(xr^{-1})r.
					\end{eqnarray*}
					This implies \eqref{eq:T p}.

					We conclude from identities \eqref{eq:Br=-Brbar}  and \eqref{eq:Brtxr} that
					\begin{eqnarray*}
						(T\odot  r)(x)	&=&T(rx)-B_r(T,x)\\
						&=&T(rx)+B_{\overline{r}}(T,\overline{r}(rx))\\
						&=&T(rx)+B_{\overline{r}}(T,(rx))\overline{r}\\
						&=&	T(rx)-B_{r}(T,(rx))\overline{r}\\
						&=&T(rx)-(T(rx)r)\overline{r}+T(rxr)\overline{r}\\
						&=&T(rxr)r^{-1}.
					\end{eqnarray*}
					This proves \eqref{eq:p T}.
				\end{proof}
				
			\begin{thm}\label{thm:B(H)complete}
						Let $H_1,H_2$ be Hilbert $\mathbb O$-bimodules and let $\{T_n\}_{n\in\mathbb N}\subset 		\huab{H_1}{H_2}$
						be right para-linear operators such that $T_n\to T$ in operator norm.
						Then $T$ is right para-linear, i.e.
						\[
						\operatorname{Re}\,B_p(T,x)=0\qquad\forall\,x\in H_1,\ \forall\,p\in\mathbb O,
						\]
						where $B_p(T,x):=T(xp)-T(x)\,p$ is the second (right) associator.
					\end{thm}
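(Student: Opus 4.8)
The plan is to show that the defining condition of right para-linearity, namely $\operatorname{Re} B_p(T,x)=0$ for all $x\in H_1$ and $p\in\mathbb{O}$, is a closed condition in the operator-norm topology. The key observation is that, for fixed $x$ and $p$, the associator $B_p(T,x)=T(xp)-T(x)\,p$ is \emph{real-linear in the operator argument} $T$, so that
\[
B_p(T,x)-B_p(T_n,x)=(T-T_n)(xp)-\bigl((T-T_n)(x)\bigr)p.
\]
Since each $T_n$ is right para-linear, $\operatorname{Re} B_p(T_n,x)=0$, and therefore it suffices to control $\operatorname{Re} B_p(T-T_n,x)$ and let $n\to\infty$.

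Concretely, I would fix $x\in H_1$ and $p\in\mathbb{O}$ and write
\[
\operatorname{Re} B_p(T,x)=\operatorname{Re}\bigl[(T-T_n)(xp)\bigr]-\operatorname{Re}\bigl[\bigl((T-T_n)(x)\bigr)p\bigr].
\]
To estimate the two terms I need two auxiliary facts, both available from the earlier results. First, the real-part operator is norm non-increasing: by Lemma~\ref{lem:re=re*}(1) the map $\operatorname{Re}$ is self-adjoint with respect to $\langle\cdot,\cdot\rangle_{\mathbb{R}}$, and since it is idempotent it is an orthogonal projection, hence $\|\operatorname{Re} m\|\le\|m\|$ (recall $\|m\|^{2}=\langle m,m\rangle\in\mathbb{R}$ coincides with the real-inner-product norm). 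Second, the norm is multiplicative under right scalar multiplication, $\|xp\|=\|x\|\,|p|$; this follows from Lemma~\ref{lem:inner_product}(2) together with the alternative identity $(xp)\overline{p}=|p|^{2}x$, which gives $\|xp\|^{2}=\langle xp,xp\rangle_{\mathbb{R}}=\langle x,|p|^{2}x\rangle_{\mathbb{R}}=|p|^{2}\|x\|^{2}$.

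Combining these with the definition of the operator norm, each of the two summands is bounded by $\|T-T_n\|\,\|x\|\,|p|$, whence
\[
\|\operatorname{Re} B_p(T,x)\|\le 2\,\|T-T_n\|\,\|x\|\,|p|.
\]
Because the left-hand side is independent of $n$ while the right-hand side tends to $0$ as $n\to\infty$ (by hypothesis $\|T-T_n\|\to0$), we conclude $\operatorname{Re} B_p(T,x)=0$ for all $x\in H_1$ and $p\in\mathbb{O}$, which is exactly the para-linearity of $T$.

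I do not anticipate a genuine obstacle: this is a routine closedness-under-limits argument, and the only delicate points are the two auxiliary estimates. Both the boundedness of $\operatorname{Re}$ and the multiplicativity $\|xp\|=\|x\|\,|p|$ are immediate consequences of lemmas already established, so I would state them succinctly and proceed. A minor bookkeeping caveat is the sign convention for $B_p$ (the statement uses $B_p(T,x)=T(xp)-T(x)\,p$, opposite to Definition~\ref{def:para_linear}), but this is irrelevant here since we only assert the vanishing of a real part, which is insensitive to an overall sign.
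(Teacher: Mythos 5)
Your proposal is correct and follows essentially the same route as the paper: decompose $\operatorname{Re} B_p(T,x)$ using the real-linearity of $B_p(\cdot,x)$ in the operator argument, kill the $T_n$-term by para-linearity of $T_n$, and bound the remaining terms by a constant times $\|T-T_n\|$ using the boundedness of $\operatorname{Re}$. Your extra justifications (that $\operatorname{Re}$ is an orthogonal projection and that $\|xp\|=\|x\|\,|p|$) are fine, though the paper gets by with the cruder observation that $\operatorname{Re}$ is bounded via \eqref{eq:real_part_formula} and that $\|xp\|$ is simply a fixed finite quantity.
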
				
					\begin{proof}
						Fix $x\in H_1$ and $p\in\mathbb O$. Since $T_n\to T$ in operator norm and the real-part
						projection $\operatorname{Re}:H_2\to \operatorname{Re}H_2$ is a bounded real-linear map by \eqref{eq:real_part_formula}, we have
						\begin{align*}
								\operatorname{Re}\,B_p(T,x)
							&=\operatorname{Re}\bigl(T(xp)-T(x)\,p\bigr)\\
							&=\operatorname{Re}\bigl((T-T_n)(xp)\bigr)
							+\operatorname{Re}\bigl(T_n(xp)-T_n(x)\,p\bigr)
							+\operatorname{Re}\bigl((T_n-T)(x)\,p\bigr).						
						\end{align*}
						Because each $T_n$ is para-linear, $\operatorname{Re}\bigl(T_n(xp)-T_n(x)\,p\bigr)=\operatorname{Re}\,B_p(T_n,x)=0$.
						For the remaining two terms, using the boundedness of $\operatorname{Re}$, we obtain
						\[
						\bigl\|\operatorname{Re}\bigl((T-T_n)(xp)\bigr)\bigr\|
						\;\le\; C\,\|T-T_n\|\,\|xp\|
						\quad\text{and}\quad
						\bigl\|\operatorname{Re}\bigl((T_n-T)(x)\,p\bigr)\bigr\|
						\;\le\; C'\,\|T_n-T\|\,\|x\|\,|p|.
						\]
						Hence both terms tend to $0$ as $n\to\infty$. Therefore $\operatorname{Re}\,B_p(T,x)=0$
						for all $x$ and $p$, i.e. $T$ is right  para-linear.
			\end{proof}

				\begin{thm}\label{thm:B-O-Module}
					Let $H_1$, $H_2$ be two Hilbert  $\O$-bimodules. Then, endowed with  the octonionic scalar multiplications \eqref{eq:f p} and \eqref{eq:p f} and the usual operator norm,
					$	\huab{H_1}{H_2}$ is a Banach $\O$-bimodule.
				\end{thm}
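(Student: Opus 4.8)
The plan is to check the three defining features of a Banach $\O$-bimodule separately: that $\huab{H_1}{H_2}$ is a real Banach space, that it carries the $\O$-bimodule structure given by \eqref{eq:f p} and \eqref{eq:p f}, and that these octonionic scalar multiplications are continuous for the operator norm. The bimodule structure is already supplied by \cite[Theorem 3.22]{huoqinghai2020nonass}, so the substance of the proof lies in completeness and in the norm compatibility of the scalar actions.

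For completeness I would first note that each $H_i$ is a real Hilbert space under $\langle\cdot,\cdot\rangle_{\spr}$, and that $\|u\|^2=\langle u,u\rangle=\langle u,u\rangle_{\spr}$, so that the module norm agrees with the norm induced by the real inner product. Consequently the ambient space $\huabr{H_1}{H_2}$ of bounded real-linear operators is a real Banach space in the usual operator norm. Now $\huab{H_1}{H_2}$ is a real-linear subspace of $\huabr{H_1}{H_2}$---para-linearity is preserved under real-linear combinations, as is transparent from the characterization in Theorem \ref{thm:para_linear_char}---and Theorem \ref{thm:B(H)complete} states exactly that this subspace is closed in the operator norm. A closed subspace of a Banach space is complete, so $\huab{H_1}{H_2}$ is a real Banach space.

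The main point is that the multiplications \eqref{eq:p f} and \eqref{eq:f p} are bounded, and I would in fact establish the sharp identities $\|r\odot T\|=\|T\odot r\|=|r|\,\|T\|$. The crucial input is the multiplicativity of the module norm, $\|xp\|=\|px\|=|p|\,\|x\|$. For the right action this follows from $\|xp\|^2=\langle x,(xp)\overline p\rangle_{\spr}=\langle x,|p|^2x\rangle_{\spr}=|p|^2\|x\|^2$, using Lemma \ref{lem:inner_product}(2) and the identity $(xp)\overline p=|p|^2x$ (valid because $[x,p,\overline p]=0$ by alternativity); the left action is handled analogously through Lemma \ref{lem:left_mult_adjoint}(1) and $\overline p(px)=|p|^2x$. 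Granting this, for a unit octonion $r$ the conjugation formula \eqref{eq:T p} gives $\|(r\odot T)(x)\|=\|rT(xr^{-1})r\|=\|T(xr^{-1})\|\le\|T\|\,\|x\|$, since left and right multiplication by a unit octonion are isometries; hence $\|r\odot T\|\le\|T\|$. Applying the same bound to the unit octonion $\overline r$ and the operator $r\odot T$, together with $\overline r\odot(r\odot T)=T$ (the left action being associative on the subalgebra generated by $r$), yields the reverse inequality, and \eqref{eq:p T} treats $T\odot r$ identically. Finally both actions are real-homogeneous in $r$, since $B_{\lambda r}(T,x)=\lambda B_r(T,x)$ for $\lambda\in\spr$, so writing a nonzero $r$ as $|r|\,(r/|r|)$ extends the identities to all $r$ with the factor $|r|$; this gives the required continuity.

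I expect the norm-compatibility step to be the real obstacle, not completeness. One must first confirm that the module norm is genuinely multiplicative under octonionic multiplication, and then feed the conjugation-sandwich formulas \eqref{eq:T p}--\eqref{eq:p T} into the estimate so that the two unit-modulus octonionic factors cancel the norm exactly; the subtlety is purely that one is working in a non-associative setting, so the cancellations must be justified via alternativity and the restriction to the associative subalgebra generated by a single octonion. Completeness, by contrast, is immediate once Theorem \ref{thm:B(H)complete} is read as the statement that $\huab{H_1}{H_2}$ is closed in $\huabr{H_1}{H_2}$.
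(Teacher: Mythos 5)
Your proposal is correct and follows essentially the same route as the paper: completeness via Theorem \ref{thm:B(H)complete} (closedness of the para-linear operators inside the bounded real-linear ones), and the norm identities $\|r\odot T\|=\|T\odot r\|=|r|\,\|T\|$ obtained by feeding the conjugation formulas \eqref{eq:T p}--\eqref{eq:p T} into the operator norm and then reversing the inequality by acting with $r^{-1}$ (justified by alternativity on the subalgebra generated by $r$). The only differences are cosmetic: you normalize to unit octonions and spell out the multiplicativity $\|xp\|=\|px\|=|p|\,\|x\|$ explicitly, whereas the paper works with a general nonzero $r$ and uses this multiplicativity implicitly.
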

				\begin{proof}
			The completeness of 	$	\huab{H_1}{H_2}$ follows from Theorem \ref{thm:B(H)complete}.
					It suffices to prove  $$\fsh{r\odot T}=|r|\fsh{T}=\fsh{T\odot r}$$ for any $T\in 	\huab{H_1}{H_2}$ and any $r\in \spo$. \bfs\ $r\neq 0$. By identity \eqref{eq:p T}, we have
					\begin{eqnarray}\label{eqpr:rT}
						\fsh{T\odot r}&=&\sup_{\fsh{x}\leqslant1}\fsh{({T\odot r})(x)}\\
						&=&\sup_{\fsh{x}\leqslant1}\fsh{T(rxr)r^{-1}}\notag\\
						&\leqslant& |r^{-1}|\ \fsh{T}\  \| rxr\|\notag\\
						&=&|r|\fsh{T}.\notag
					\end{eqnarray}
					Since $r$ and $T$ are arbitrarily fixed, we can replace $r$ with $r^{-1}$ and   $T$ with $T\odot r$ to get $$\fsh{ (T\odot r)\odot r^{-1}}\leqslant |r^{-1}|\fsh{T\odot r}.$$
					Utilizing \eqref{eqpr:rT} again, we get
					$$\fsh{T}=\fsh{ (T\odot r)\odot r^{-1}}\leqslant |r^{-1}|\fsh{T\odot r}\leqslant |r^{-1}||r|\fsh{T}= \fsh{T}.$$
					This forces  $$\fsh{T\odot r}=|r|\fsh{T}$$  as desired. The proof of equality $|r|\fsh{T}=\fsh{r\odot T}$ is similar.
				\end{proof}

				\subsection{$\mathbb{O}$-Algebra Structure}
				
				\begin{mydef}
					\label{def:O alg}
					An $\mathbb{O}$-bimodule $\mathcal{U}$ with multiplication $\mathcal{U} \times \mathcal{U} \to \mathcal{U}$ is an \textit{$\mathbb{O}$-algebra} if:
					\begin{enumerate}
						\item The multiplication  is para-bilinear, i.e., the left (right) multiplication operator is right (left) para-linear;
						\item There exists an {element} $ e\in \operatorname{Re}\mathcal{U}$, {called  the \textbf{unit} of $\algma$,} such that $$(pe)x=px,\qquad  x(pe)=xp$$
						for all $x\in \algma$ and  $ p\in\spo$.
					\end{enumerate}
				\end{mydef}

				\begin{rem}\label{rem:Oalg almost linear}
					There is an algebraic embedding of $\O$ into an $\O$-algebra $\algma$: \begin{eqnarray*}\O &\hookrightarrow& \algma, \\  p &\mapsto&  pe.
					\end{eqnarray*}
					Indeed, since $e\in \re \algma$, we have $(pq)e=p(qe)$ for any $p,q\in \O$. Then by part {(2)} of Definition \ref{def:O alg}, we obtain $(pq)e=(pe)(qe)$ as desired.

				\end{rem}
				
			{Our next goal is to prove} that $(\mathscr{B}_{\mathcal{RO}}({H}),\circledcirc)$ is an $\O$-algebra. To show this, we recall the notation introduced in \cite{huoqinghai2020nonass}
				\begin{eqnarray}
					[f,g,h]:=(f\circledcirc g)\circledcirc h-f\circledcirc (g\circledcirc h)
				\end{eqnarray}
				for  para-linear mappings $f,g,h$. By \cite[Lemma 3.30]{huoqinghai2020nonass}, we have
				\begin{eqnarray}\label{eq:re [fgh]=0}
					\re [f,g,h]=0.
				\end{eqnarray}
				Note that the real part operator $\re$ here is defined for a para-linear map, and $(\re f)(x)\neq \re (f(x))$. In fact, $\re f$ is an octonionic linear map and by  {\eqref{eq:real_part_formula},}
				$$\re f=\frac{5}{12}f - \frac{1}{12} \sum_{i=1}^7 e_i \odot f\odot  e_i.$$
{We now prove the following interesting result:}

\begin{thm}\label{thm:OBanach}
 {The space \(\mathscr{B}_{\mathcal{R}\mathcal{O}}(H)\), endowed with the regular composition \(\circledcirc\) from Definition~\ref{def:regular_composition}, is an \(\mathbb{O}\)-algebra.}
\end{thm}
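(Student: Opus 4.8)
The plan is to verify directly the two defining conditions of Definition~\ref{def:O alg} for $\mathcal{U} := \mathscr{B}_{\mathcal{RO}}(H)$ equipped with the product $\circledcirc$. The underlying Banach $\mathbb{O}$-bimodule structure is already in hand from Theorem~\ref{thm:B-O-Module}, and since the regular composition of two bounded para-linear maps is again bounded and para-linear (the Remark after Lemma~\ref{lem:associator_formula}), the multiplication $\circledcirc:\mathcal{U}\times\mathcal{U}\to\mathcal{U}$ is well-defined. What remains is (i) para-bilinearity of $\circledcirc$ and (ii) the existence of a unit in $\operatorname{Re}\mathcal{U}$.

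For (i), the crucial device I would exploit is that the $\odot$-scalar action on $\mathcal{U}$ is itself a regular composition with the left-multiplication operators $L_p$ on $H$: by Lemma~\ref{lem:Rp prop 2} one has $p\odot f = L_p\circledcirc f$ and $f\odot p = f\circledcirc L_p$. This lets me recast the relevant second associators of the multiplication maps as \emph{triple composition associators}. Indeed, testing right para-linearity of the left-multiplication operator $g\mapsto f\circledcirc g$ amounts to computing
\[
(f\circledcirc g)\odot p - f\circledcirc(g\odot p) = (f\circledcirc g)\circledcirc L_p - f\circledcirc(g\circledcirc L_p) = [f,g,L_p],
\]
while testing left para-linearity of the right-multiplication operator $f\mapsto f\circledcirc g$ gives
\[
(p\odot f)\circledcirc g - p\odot(f\circledcirc g) = (L_p\circledcirc f)\circledcirc g - L_p\circledcirc(f\circledcirc g) = [L_p,f,g].
\]
Both have vanishing real part by \eqref{eq:re [fgh]=0}, which is precisely the assertion that $g\mapsto f\circledcirc g$ is right para-linear and $f\mapsto f\circledcirc g$ is left para-linear; hence $\circledcirc$ is para-bilinear.

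For (ii), I would take $e=\mathrm{Id}_H$. Since $\mathrm{Id}_H$ is $\mathbb{O}$-linear, $e\in\mathscr{B}_{\mathbb{O}}(H)=\operatorname{Re}\mathcal{U}$ by \eqref{eq:re LO=O}. A one-line check from \eqref{eq:p f} shows $p\odot e=L_p$, because $B_p(\mathrm{Id},x)=\mathrm{Id}(x)p-\mathrm{Id}(xp)=0$. Applying Lemma~\ref{lem:Rp prop 2} once more yields $(p\odot e)\circledcirc g = L_p\circledcirc g = p\odot g$ and $g\circledcirc(p\odot e) = g\circledcirc L_p = g\odot p$ for all $g\in\mathcal{U}$, $p\in\mathbb{O}$, which are exactly the two unit axioms.

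The hard part is not any single computation but the conceptual bookkeeping in step (i): one must keep straight that the associators governing para-bilinearity are computed with respect to the $\odot$-bimodule structure \emph{on the operator space} $\mathcal{U}$ (via \eqref{eq:f p}, \eqref{eq:p f}), not the pointwise action of $\mathbb{O}$ on $H$, and then use the translation $p\odot f=L_p\circledcirc f$, $f\odot p=f\circledcirc L_p$ to identify them with the composition associators $[L_p,f,g]$ and $[f,g,L_p]$. Once this identification is secured, the known vanishing of the real part of the triple composition associator \eqref{eq:re [fgh]=0} disposes of both cases at once, and the verification of the unit is essentially immediate.
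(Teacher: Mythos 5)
Your proposal is correct and follows essentially the same route as the paper's proof: both verify para-bilinearity by rewriting the $\odot$-actions as regular compositions with $L_p$ (via Lemma~\ref{lem:Rp prop 2}), identifying the resulting second associators with the triple composition associators $[f,g,L_p]$ and $[L_p,f,g]$, and invoking \eqref{eq:re [fgh]=0}, and both take $e=\mathrm{Id}_H$ with $p\odot\mathrm{Id}=L_p$ to verify the unit axiom. The only cosmetic difference is that you spell out the left para-linearity of the right-multiplication map explicitly where the paper defers it to a ``similar'' argument.
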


				\begin{proof}
					For any $T\in \mathscr{B}_{\mathcal{RO}}({H})$, we first prove that the right algebraic multiplication
					$$L_T:\mathscr{B}_{\mathcal{RO}}({H})\to \mathscr{B}_{\mathcal{RO}}({H}),\qquad S\mapsto T\circledcirc S$$ is right para-linear.
					By definition,  we obtain
					\begin{eqnarray*}
						B_p(L_T,S)&=&L_T(S)\odot p- L_T(S\odot p)\\
						&=&(T\circledcirc S)\odot p-T\circledcirc(S\odot p).
					\end{eqnarray*}
					In view of Lemma \ref{lem:Rp prop 2}, we conclude
					\begin{eqnarray*}
						B_p(L_T,S)&=&(T\circledcirc S)\circledcirc L_p-T\circledcirc(S\circledcirc L_p)\\
						&=&[T,S,L_p].
					\end{eqnarray*}
					It follows from \eqref{eq:re [fgh]=0} that
					$$\re 	B_p(L_T,S)=0.$$Hence $L_T$ is right para-linear as desired. 
					The fact that the right algebraic multiplication
					$$R_T:\mathscr{B}_{\mathcal{RO}}({H})\to \mathscr{B}_{\mathcal{RO}}({H}),\qquad S\mapsto S\circledcirc T$$ is left para-linear can be proved in a similar manner.
					
					We next show that the identity operator $I\in\mathscr{B}_{\O}({H})= \re\mathscr{B}_{\mathcal{RO}}({H})$ satisfy  axiom $(2)$ in Definition \ref{def:O alg}.
					In view of Lemma \ref{lem:Rp prop 2}, we have
					$$(p\odot I)\circledcirc  T=L_p\circledcirc T =p\odot T$$
					and similarly,  $$ T\circledcirc (p\odot I)=T\circledcirc L_p=T\odot p.$$
					This proves that  $(\mathscr{B}_{\mathcal{RO}}({H}),\circledcirc)$ is an $\O$-algebra.
				\end{proof}

				Under a modified norm, $\mathscr{B}_{\mathcal{RO}}(H)$ is an octonionic Banach algebra.
				\begin{thm}
					\label{thm:norm_estimate}
					For $S, T \in \mathscr{B}_{\mathcal{RO}}(H)$:
					\begin{enumerate}
						\item $\|S \circledcirc T\| \leqslant 8 \|S\| \|T\|$
						\item Under $\|T\|' := 8\|T\|$, $\mathscr{B}_{\mathcal{RO}}(H)$ is an octonionic Banach algebra.
					\end{enumerate}
				\end{thm}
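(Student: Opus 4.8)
The plan is to estimate $\|(S\circledcirc T)(x)\|$ directly from the explicit expansion of the regular composition. Since $S$ is para-linear and $T$ is in particular real-linear, Definition~\ref{def:regular_composition} gives
\[
(S\circledcirc T)(x)=\sum_{i=0}^{7} e_i\,\operatorname{Re}\big(S(T(x\overline{e}_i))\big),
\]
so the strategy is to bound each of the eight summands by $\|S\|\,\|T\|\,\|x\|$ and apply the triangle inequality; summing eight terms is exactly what produces the constant $8$.

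First I would record three norm identities, all consequences of the tensor decomposition $H\cong \operatorname{Re}H\otimes_{\mathbb R}\mathbb O$ (Theorem~\ref{thm:tensor_decomp}). Writing $z=\sum_{k}e_k z_k$ with $z_k\in\operatorname{Re}H$, the reality of $\langle z,z\rangle$ together with $\operatorname{Re}(\overline{e_i}e_j)=\delta_{ij}$ and Theorem~\ref{thm:tensor_decomp}(1),(2) yields the Pythagorean identity $\|z\|^2=\sum_k\|z_k\|^2$, whence: (a) right multiplication by a unit octonion is isometric, $\|x\overline{e}_i\|=\|x\|$; (b) left multiplication by $e_i$ of a real element is isometric, $\|e_i a\|=\|a\|$ for $a\in\operatorname{Re}H$ (using centrality of $a$); and (c) the real-part projection is a contraction, $\|\operatorname{Re}z\|\leqslant\|z\|$, since it extracts the $e_0$-component in an orthogonal decomposition. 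Then for each $i$,
\[
\|e_i\operatorname{Re}(S(T(x\overline{e}_i)))\|
=\|\operatorname{Re}(S(T(x\overline{e}_i)))\|
\leqslant\|S(T(x\overline{e}_i))\|
\leqslant\|S\|\,\|T\|\,\|x\overline{e}_i\|
=\|S\|\,\|T\|\,\|x\|,
\]
using (b), (c), submultiplicativity of the ordinary operator norm, and (a). Summing over $i=0,\dots,7$ gives $\|(S\circledcirc T)(x)\|\leqslant 8\|S\|\,\|T\|\,\|x\|$, and taking the supremum over $\|x\|\leqslant 1$ proves part (1).

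For part (2) I would simply rescale. By Theorem~\ref{thm:OBanach} the space $\mathscr{B}_{\mathcal{RO}}(H)$ is an $\mathbb O$-algebra under $\circledcirc$, and by Theorem~\ref{thm:B-O-Module} it is a complete Banach $\mathbb O$-bimodule with $\|r\odot T\|=|r|\,\|T\|=\|T\odot r\|$. Setting $\|T\|':=8\|T\|$ preserves completeness (equivalent norm) and bimodule homogeneity ($\|r\odot T\|'=|r|\,\|T\|'$), while part (1) upgrades to submultiplicativity:
\[
\|S\circledcirc T\|'=8\|S\circledcirc T\|\leqslant 8\cdot 8\,\|S\|\,\|T\|=(8\|S\|)(8\|T\|)=\|S\|'\,\|T\|',
\]
so $(\mathscr{B}_{\mathcal{RO}}(H),\circledcirc,\|\cdot\|')$ is an octonionic Banach algebra.

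The only genuine obstacle is establishing the norm identities (a)--(c); everything else is a triangle-inequality estimate. These identities concern not non-associativity itself but the orthogonality built into the tensor decomposition, and the cleanest route is to verify $\|z\|^2=\sum_k\|z_k\|^2$ from Theorem~\ref{thm:tensor_decomp} and read off (a)--(c). I note that the triangle-inequality route is deliberately crude: the Pythagorean identity applied to $(S\circledcirc T)(x)=\sum_i e_i a_i$ would in fact give the sharper constant $2\sqrt2$, but the bound $8$ is all that is needed here and matches the factor recurring throughout the operator calculus.
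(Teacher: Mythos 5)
Your proof is correct and follows essentially the same route as the paper: both arguments reduce $(S\circledcirc T)(x)$ to a sum of eight terms indexed by the octonion basis, bound each term by $\|S\|\,\|T\|\,\|x\|$ using the contractivity of $\operatorname{Re}$ and the fact that multiplication by a unit octonion is isometric, and then obtain part (2) by rescaling the norm and invoking Theorems~\ref{thm:OBanach} and~\ref{thm:B-O-Module}. The only cosmetic difference is that you expand via the defining formula $(S\circledcirc T)(x)=\sum_i e_i\operatorname{Re}(S(T(x\overline{e}_i)))$, whereas the paper first writes $x=\sum_i e_i x_i$ with $x_i\in\operatorname{Re}H$ and uses $(S\circledcirc T)(x)=\sum_i S(Tx_i)e_i$; these are equivalent decompositions and yield the same constant $8$.
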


\begin{proof}
Let \(x=\sum_{i=0}^{7} e_{i}x_{i}\) with \(x_{i}\in \operatorname{Re} H\). Then
{	\[
\|(S\circledcirc T)x\|
=\Big\|\sum_{i=0}^{7} S(Tx_{i})\,e_{i}\Big\|
\;\le\;\sum_{i=0}^{7}\|S(Tx_{i})\|
\;\le\;\|S\|\,\|T\| \sum_{i=0}^{7}\|x_{i}\|.
\]
By \eqref{eq:real_part_formula} the real-part projection is contractive, hence \(\|x_{i}\|\le \|x\|\) for each \(i\), and therefore
\[
\|(S\circledcirc T)x\| \;\leqslant\; 8\,\|S\|\,\|T\|\,\|x\|.
\]}
It follows that \(\|S\circledcirc T\|\leqslant  8\,\|S\|\,\|T\|\). In particular, for the rescaled operator norm \(\|\cdot\|'\) (so that \(\|S\circledcirc T\|'\leqslant \|S\|'\,\|T\|'\)), the multiplication is submultiplicative. The claim then follows from Theorems~\ref{thm:B-O-Module} and~\ref{thm:OBanach}.
\end{proof}

				\subsection{Involution and $\mathbb{O}$-Involutive Algebra}
				
				\begin{mydef}
					\label{def:involutive_algebra}
					An $\mathbb{O}$-algebra $\mathcal{U}$ is \textit{involutive} if {it can be} equipped with a conjugate $\mathbb{O}$-linear map $* : \mathcal{U} \to \mathcal{U}$ satisfying:
					\begin{enumerate}
						\item $(rx)^* = x^* \overline{r}$, $(xr)^* = \overline{r} x^*$
						\item $x^{**} = x$
						\item $(xy)^* = y^* x^*$
					\end{enumerate}
					for all $x,y\in \mathcal{U}$ and any $r\in \O$.
				\end{mydef}

				\begin{thm}
					\label{thm:involutive_algebra}
					$(\mathscr{B}_{\mathcal{RO}}(H), \circledcirc, *)$ is an $\mathbb{O}$-involutive algebra.
					
				\end{thm}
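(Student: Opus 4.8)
The plan is to verify, one by one, the three axioms of Definition~\ref{def:involutive_algebra} for the map $T \mapsto T^*$, exploiting throughout the fact that this adjoint coincides with the real-Hilbert-space adjoint with respect to $\langle\cdot,\cdot\rangle_{\mathbb{R}}$, as recorded in Remark~\ref{rem:T*=T*R}. First note that $*$ sends $\mathscr{B}_{\mathcal{RO}}(H)$ into itself by Theorem~\ref{thm:T* is paralinear}, and that the involution identity $T^{**}=T$ together with real-additivity $(S+T)^*=S^*+T^*$ are already furnished by Remark~\ref{rem:T*=T*R}, parts (2) and (1). Thus the two genuinely new ingredients are the anti-multiplicativity $(S\circledcirc T)^*=T^*\circledcirc S^*$ and the conjugate-scalar rules $(r\odot T)^*=T^*\odot\overline r$ and $(T\odot r)^*=\overline r\odot T^*$; I would establish the product rule first and then deduce the scalar rules from it.

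The heart of the argument, and the expected main obstacle, is the product rule $(S\circledcirc T)^*=T^*\circledcirc S^*$. Both sides are para-linear (the left by closedness of $\circledcirc$ together with Theorem~\ref{thm:T* is paralinear}, the right directly), so by the Uniqueness Lemma (Lemma~\ref{cor: A_p(x,f)=0 and f(px)=pf(x)}, part (2)) it suffices to prove that they agree on $\operatorname{Re} H$. The key observation that neutralizes the non-associative correction is that pairing against a real vector only sees real parts: by Lemma~\ref{lem:re=re*}(1) one has $\langle u, y\rangle_{\mathbb{R}}=\langle \operatorname{Re} u, y\rangle_{\mathbb{R}}$ whenever $y\in\operatorname{Re} H$. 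Since $\operatorname{Re}\big((S\circledcirc T)x\big)=\operatorname{Re}\big(S(Tx)\big)$ by Definition~\ref{def:regular_composition}, the term distinguishing the regular composition from the ordinary one drops out when tested against $y\in\operatorname{Re} H$, and the computation collapses to
\[
\langle (S\circledcirc T)x, y\rangle_{\mathbb{R}}=\langle S(Tx), y\rangle_{\mathbb{R}}=\langle Tx, S^* y\rangle_{\mathbb{R}}=\langle x, T^*(S^* y)\rangle_{\mathbb{R}},
\]
where the last two equalities are the real-adjoint characterization of Remark~\ref{rem:T*=T*R} applied to $S$ and then to $T$. Comparing this with $\langle (S\circledcirc T)x, y\rangle_{\mathbb{R}}=\langle x, (S\circledcirc T)^* y\rangle_{\mathbb{R}}$ and using non-degeneracy of $\langle\cdot,\cdot\rangle_{\mathbb{R}}$ yields $(S\circledcirc T)^* y=T^*(S^* y)$ for every $y\in\operatorname{Re} H$. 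Finally, because $y\in\operatorname{Re} H$, the composition associator vanishes (Lemma~\ref{thm:vanishing_associator}, part (2)), so $T^*(S^* y)=(T^*\circledcirc S^*)y$; hence the two para-linear operators agree on $\operatorname{Re} H$, and the product rule follows from the Uniqueness Lemma.

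With the product rule in hand, the conjugate-scalar rules reduce to a short calculation. Left multiplication $L_r$ is para-linear, with adjoint $L_r^*=L_{\overline r}$, which follows from Lemma~\ref{lem:left_mult_adjoint}(1) via $\langle L_r x, y\rangle_{\mathbb{R}}=\langle rx, y\rangle_{\mathbb{R}}=\langle x, \overline r y\rangle_{\mathbb{R}}$. Combining this with the identities $r\odot T=L_r\circledcirc T$ and $T\odot r=T\circledcirc L_r$ from Lemma~\ref{lem:Rp prop 2}, the product rule gives
\[
(r\odot T)^*=(L_r\circledcirc T)^*=T^*\circledcirc L_{\overline r}=T^*\odot\overline r,\qquad (T\odot r)^*=(T\circledcirc L_r)^*=L_{\overline r}\circledcirc T^*=\overline r\odot T^*,
\]
which is precisely axiom (1). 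Together with $T^{**}=T$ and real-additivity, this shows that $T\mapsto T^*$ is a conjugate $\mathbb{O}$-linear involution, so $(\mathscr{B}_{\mathcal{RO}}(H),\circledcirc,*)$ is an $\mathbb{O}$-involutive algebra. The only delicate point I anticipate is the interference between the associator corrections and the $\mathbb{R}$-valued pairing; restricting to real test vectors, where $\langle\cdot,\cdot\rangle_{\mathbb{R}}$ collapses to the real part, is exactly the device that makes the argument go through.
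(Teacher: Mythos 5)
Your proof is correct, but it is organized quite differently from the paper's. The paper proves the conjugate-scalar rules $(r\odot T)^*=T^*\odot\overline r$ and $(T\odot r)^*=\overline r\odot T^*$ \emph{first}, as an independent computation resting on the associator-symmetry identity \eqref{eq:dual re<x,Ap(yT*)>=re<Ap(xT),y>} and Lemma \ref{lem:left_mult_adjoint}; it then attacks the anti-multiplicativity by expanding $\langle x,(T\circledcirc S)^*y\rangle$ and $\langle x,(S^*\circledcirc T^*)y\rangle$ into several composition and triple associators and killing them one by one via Lemmas \ref{thm:vanishing_associator} and \ref{lem:vanishing_associator} for $x,y\in\operatorname{Re}H$. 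You invert this order: you prove $(S\circledcirc T)^*=T^*\circledcirc S^*$ directly by the three-line chain
$\langle(S\circledcirc T)x,y\rangle_{\mathbb{R}}=\langle S(Tx),y\rangle_{\mathbb{R}}=\langle x,T^*(S^*y)\rangle_{\mathbb{R}}$ for $y\in\operatorname{Re}H$ — exploiting $\langle u,y\rangle_{\mathbb{R}}=\langle\operatorname{Re}u,y\rangle_{\mathbb{R}}$ together with the defining property \eqref{eq:regular_comp} of $\circledcirc$ and the real-adjoint characterization of Remark \ref{rem:T*=T*R} — and then you \emph{deduce} the scalar rules as corollaries via $L_r^*=L_{\overline r}$ and the identities $r\odot T=L_r\circledcirc T$, $T\odot r=T\circledcirc L_r$ of Lemma \ref{lem:Rp prop 2}. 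Both arguments share the same closing device (agreement of two para-linear operators on $\operatorname{Re}H$ plus the Uniqueness Lemma), but your computation for the product rule is noticeably leaner, since testing only against real $y$ makes every non-associative correction drop out at once rather than being cancelled term by term; the price is that you must separately know $L_r^*=L_{\overline r}$ and use the (correct form of the) identity $T\circledcirc L_r=T\odot r$, which the paper's direct proof of the scalar rules does not need. One cosmetic caution: the displayed statement of Lemma \ref{lem:Rp prop 2} contains a typo ($f\circledcirc L_p=p\odot f$ should read $f\circledcirc L_p=f\odot p$, as the proof of Theorem \ref{thm:OBanach} confirms); you are using the corrected version, which is the right one.
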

				\begin{proof}
					We first prove that for any $r\in \O$, $$	(r\odot T)^* = T^*\odot\overline{r}.$$
					
					By definition, for all $x,y \in H$,
					\[
					\re \langle x, (r \odot T)^* y \rangle = \langle (r \odot T)x, y \rangle_{\mathbb{R}} = \langle rT(x) + B_r(T,x), y \rangle_{\mathbb{R}}.
					\]
					On the other hand, it follows from  Lemma \ref{lem:left_mult_adjoint} that
					\begin{eqnarray*}
						\re 	\fx{x}{(T^*\odot \overline{r})y}
						&=&\re \fx{x}{{T^*}(\overline{r}y)-B_{\overline{r}}(T^*,y)}\notag\\
						&=&\fx{Tx}{\overline{r}y}_{\mathbb{R}}+\fx{x}{B_r(T^*,y)}_{\R}\notag\\
						&=&\fx{rTx}{y}_{\mathbb{R}}+\fx{x}{B_r(T^*,y)}_{\mathbb{R}}.
					\end{eqnarray*}
					We conclude from identity \eqref{eq:dual re<x,Ap(yT*)>=re<Ap(xT),y>} that
					\begin{align*}
						\re\big(\fx{x}{(r\odot T)^*y}-\fx{x}{(T^*\odot \overline{r})y}\big)&=\fx{B_r(T,x)}{y}_{\spr}-\fx{x}{B_r(T^*,y)}_{\spr}=0.
					\end{align*}
					Thus we obtain $(r\odot T)^*=T^*\odot \overline{r}$.	By the same method, we can also prove $(T\odot r)^*=\overline{r}\odot T^*.$
					
					Next, we prove {the involution applied to composition}.	
					For any $S, T\in \mathscr{B}_{\mathcal{RO}}({H})$,
					by definition, we have
						 {\begin{eqnarray*}
						\fx{x}{(T\circledcirc S)^*y}&=&\fx{(T\circledcirc S)x}{y}+[y,T\circledcirc S,x]\\
						&=&\fx{T(S x)}{y}+\fx{[T,S,x]}{y}+[y,T\circledcirc S,x].
					\end{eqnarray*}}
					Similarly,
					 {	\begin{align*}
						\fx{x}{ (S^*\circledcirc T^*)y}&=\fx{T (Sx)}{y}+\fx{x}{[S^*,T^*,y]}+[y,T,Sx]+[T^*y,S,x].
					\end{align*}}
				
Applying Lemmas \ref{thm:vanishing_associator} and \ref{lem:vanishing_associator}, we conclude that the difference of the two expressions vanishes  {for all $x,y\in \re H$}:
					 {	\begin{eqnarray*}
						&&\fx{x}{ (S^*\circledcirc T^*)y}-\fx{x}{(T\circledcirc S)^*y}\\
						&=&
						[y,T,Sx]+[T^*y,S,x]+\fx{x}{[S^*,T^*,y]}-\big(\fx{[T,S,x]}{y}+[y,T\circledcirc S,x]\big)\\
						&=	&0.
					\end{eqnarray*}}
					By using the second assertion of Lemma \ref{lem:re=re*}, we can infer that  $$(S^*\circledcirc T^*)y=(T\circledcirc S)^*y$$
					for all $y\in\re{H}$.
					Note that $(S^*\circledcirc T^*)$ is para-linear  and  $(T\circledcirc S)^*$ is also para-linear by  Theorem \ref{thm:T* is paralinear}. Then it follows from  Lemma  \ref{cor: A_p(x,f)=0 and f(px)=pf(x)} that  $S^*\circledcirc T^*=(T\circledcirc S)^*$.
					This completes the proof.
				\end{proof}

				As a corollary, we obtain a general relation of associators.
				\begin{cor}[Associator-Adjoint Duality]
					\label{cor:associator_adjoint_duality}
					For $S,T \in \mathscr{B}_{\mathcal{RO}}(H)$ and $x,y \in H$:
				 {	\begin{equation}\label{eq:re<[STx]y>=re<x[T*S*y]>}
						\langle [S,T,x], y \rangle_{\mathbb{R}} = \langle x, [T^*,S^*,y] \rangle_{\mathbb{R}}
					\end{equation}}
				\end{cor}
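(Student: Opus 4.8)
The plan is to expand the composition associator via its definition $[S,T,x] = (S\circledcirc T)(x) - S(T(x))$ and then pair each of the two resulting terms with $y$ in the real inner product $\langle\cdot,\cdot\rangle_{\mathbb R}$, transferring operators into the second slot by the real-adjoint identity $\langle Ax,y\rangle_{\mathbb R} = \langle x, A^* y\rangle_{\mathbb R}$ of Remark~\ref{rem:T*=T*R}. The crucial feature I would exploit is that, by that remark, this identity holds for \emph{every} bounded real-linear operator $A$, whether or not $A$ is para-linear, since the corrected adjoint always coincides with the real adjoint on $(H,\langle\cdot,\cdot\rangle_{\mathbb R})$.

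For the regular-composition term, since $(\mathscr{B}_{\mathcal{RO}}(H),\circledcirc)$ is an $\mathbb{O}$-algebra by Theorem~\ref{thm:OBanach} we have $S\circledcirc T \in \mathscr{B}_{\mathcal{RO}}(H)$, so applying the real-adjoint identity and then the involution rule $(S\circledcirc T)^* = T^*\circledcirc S^*$ from Theorem~\ref{thm:involutive_algebra} gives
$$\langle (S\circledcirc T)(x), y\rangle_{\mathbb R} = \langle x, (S\circledcirc T)^* y\rangle_{\mathbb R} = \langle x, (T^*\circledcirc S^*)(y)\rangle_{\mathbb R}.$$
For the ordinary-composition term I would transfer $S$ and then $T$ one at a time, which is legitimate because each factor is individually handled by the real-adjoint identity even though $S\circ T$ itself need not be para-linear:
$$\langle S(T(x)), y\rangle_{\mathbb R} = \langle T(x), S^* y\rangle_{\mathbb R} = \langle x, T^*(S^*(y))\rangle_{\mathbb R}.$$

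Subtracting the two displays and using the $\mathbb R$-bilinearity of $\langle\cdot,\cdot\rangle_{\mathbb R}$ then yields
$$\langle [S,T,x], y\rangle_{\mathbb R} = \langle x, (T^*\circledcirc S^*)(y) - T^*(S^*(y))\rangle_{\mathbb R} = \langle x, [T^*,S^*,y]\rangle_{\mathbb R},$$
which is precisely the claimed duality \eqref{eq:re<[STx]y>=re<x[T*S*y]>}. I do not expect a serious obstacle here: the content is entirely carried by Theorem~\ref{thm:involutive_algebra} together with Remark~\ref{rem:T*=T*R}, and the only point demanding attention is to keep the regular composition $\circledcirc$ strictly separate from the ordinary composition $\circ$, so that the order-reversal $(S\circledcirc T)^*=T^*\circledcirc S^*$ on the para-linear side is matched term-for-term against the classical order-reversal $(S\circ T)^{*_{\mathbb R}}=T^*\circ S^*$ on the ordinary side, leaving exactly the associator $[T^*,S^*,y]$ as the difference.
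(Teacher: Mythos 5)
Your proposal is correct and follows essentially the same route as the paper: the paper also writes $[S,T,x]=(S\circledcirc T-S\circ T)(x)$, moves the operator to the second slot via the real-adjoint identity of Remark~\ref{rem:T*=T*R}, and identifies the adjoint as $T^*\circledcirc S^*-T^*\circ S^*$ using Theorem~\ref{thm:involutive_algebra}. Your version merely makes explicit, term by term, what the paper's one-line chain of equalities does for the combined operator.
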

				
				\begin{proof}
					From Theorem \ref{thm:involutive_algebra} {we deduce the following chain of equalities:}
				 {	\[
					\langle [S,T,x], y \rangle_{\mathbb{R}} = \langle (S \circledcirc T - S \circ T)(x), y \rangle_{\mathbb{R}} = \langle x, (S \circledcirc T - S \circ T)^* y \rangle_{\mathbb{R}} = \langle x, [T^*,S^*,y] \rangle_{\mathbb{R}},
					\]}
{which prove the assertion.}
				\end{proof}

				\begin{rem} {So far, we introduced various types of associators and we deduced some relations among which \eqref{eq:re<[STx]y>=re<x[T*S*y]>}   is the most general one}. In fact, from \eqref{eq:re<[STx]y>=re<x[T*S*y]>}, we can derive both \eqref{eq:dual re<x,Ap(yT*)>=re<Ap(xT),y>} and
\eqref{lem:identity-inprod}.

										Let $S=L_p$,  where $p\in \O$. For any $T\in \mathscr{B}_{\mathcal{RO}}({H})$, {$x,y\in H$} it  is evident from \eqref{eq:re<[STx]y>=re<x[T*S*y]>}   that
				 {	\begin{eqnarray}\label{eqpf:<>1}
						\fx{[L_p,T,x]}{y}_\R=\fx{x}{[T^*,L_p^*,y]}_\R.
					\end{eqnarray}}

					Combining with  Lemma  \ref{lem:Rp prop 2}, we obtain  that
					\begin{eqnarray*}
						\fx{B_p(T,x)}{y}_\R=\fx{x}{-B_{\overline{p}}(T^*,y)}_\R=\fx{x}{B_p(T^*,y)}_\R,
					\end{eqnarray*}
					{namely \eqref{eq:dual re<x,Ap(yT*)>=re<Ap(xT),y>}.}
					Letting $T=L_q$ for any $q\in \O$ in \eqref{eq:dual re<x,Ap(yT*)>=re<Ap(xT),y>}, we deduce
					$$	\fx{[x,p,q]}{y}_\R=\fx{[q,x,p]}{y}_\R=\fx{B_p(x,L_q)}{y}_\R=\fx{x}{B_p(y,L_{\overline{q}})}_\R=-\fx{x}{[y,p,q]}_\R.$$
					This establishes \eqref{lem:identity-inprod}.
				\end{rem}

				\section{Octonionic Polarization Identity and the Slice Cone}
				\label{sec:polarization}
				
				This section establishes an octonionic analog of the polarization identity, a cornerstone of spectral theory in associative settings. The non-associative nature of octonions necessitates restricting quadratic forms to geometrically significant subsets—the \textit{slice cones}—to {allow well-defined}
				 sesquilinear forms. We define these structures and derive the polarization identity, which underpins our characterization of self-adjoint operators (Theorem~\ref{thm:polarization}).
				
				\subsection{Slice Decomposition and Sesquilinear Forms}

				Let $H$ be a Hilbert $\O$-bimodule and $T$ be  a right para-linear operator on $H$. {Let us set $\phi_T(x,y):=\fx{Tx}{y}$ and let us denote, for short:} $$\phi(x,y):=\fx{Tx}{y}$$ for any $x,y\in H$.
				Let $Q{(x)}:=\fx{Tx}{x}$.
				We aim to express $\phi$ in terms of $Q$.

				Firstly, we point out that $\phi$ is determined by  its value on   $\re H\times \re H$.
				Indeed, for any $x,y\in H$, we have the decompositions
				$$x=\sum_{i=0}^7x_ie_i,\quad y=\sum_{i=0}^7y_ie_i$$  with $x_i,y_i\in \re H$ for $i=0,\dots ,7$. It follows  that
				\begin{eqnarray}\label{eq:sesquilinear-decomposition}
					\phi(x,y)&=&\phi\left(\sum_{i=0}^7x_ie_i, \sum_{j=0}^7y_je_j\right)\\
					&=&\sum_{i,j=0}^7\big(\overline{e_i}\phi(x_i, y_j)\big){e_j}.\notag
				\end{eqnarray}
				Thus, $\phi$ is uniquely specified by its values $\phi(x_i, y_j)$ for $x_i, y_j \in \operatorname{Re} H$.
				
				
				The key insight is that the quadratic form $Q(x) $ on the \textit{slice cone} determines $\phi$. 	This notion is adapted from slice analysis {\cite{MR2737796}.}

				\begin{mydef}[Slice Cone]\label{def:slicecone}				
The \textbf{slice cone} of $H$ is the set
					\[
					\mathbb{C}(H) := \bigcup_{J \in \mathbb{S}} \left( \operatorname{Re} H + J \operatorname{Re} H \right),
					\]
					where $\mathbb{S} = \{ J \in \operatorname{Im} \mathbb{O} : |J| = 1 \}$ is the sphere of unit imaginary octonions. Elements $z = u + Jv \in \mathbb{C}(H)$ with $u,v \in \operatorname{Re} H$ are called slice paravectors.
				\end{mydef}
				
				\subsection{Octonionic Polarization Identity}
				
				To express $\phi(x,y)$ for $x,y \in \operatorname{Re} H$ in terms of $Q$, {let us} define the auxiliary function:
				\[
				m(x,y) := \frac{1}{2} \left( Q(x+y) {-} Q(x-y) \right) = \langle Tx, y \rangle + \langle Ty, x \rangle.
				\]
				For $i,j = 1,\dots,7$ and $x,y \in \operatorname{Re} H$, define:
				\begin{align*}
					A_{ij}(x,y) &:= m(x,y) + e_i m(x, y \overline{e_i}) + m(x, y \overline{e_j}) e_j, \\
					B_{ij}(x,y) &:= \left[ e_i m(x, y (\overline{e_i e_j})) \right] e_j, \\
					C_{ij}(x,y) &:= e_i \left[ m(x, y (\overline{e_i e_j})) e_j \right].
				\end{align*}
				The polarization identity is then given by
				
				\begin{thm}[Octonionic Polarization Identity]
					\label{thm:polarization}
					For $x,y \in \operatorname{Re} H$,
					\begin{equation}
						\langle Tx, y \rangle = \frac{1}{56} \sum_{\substack{i,j=1 \\ i \neq j}}^7 (2A_{ij} + B_{ij} + C_{ij})(x,y) - \frac{1}{98} \sum_{i,j=1}^7 A_{ij}(x,y) - \frac{1}{2} m(x,y).
						\label{eq:polarization-main}
					\end{equation}
				\end{thm}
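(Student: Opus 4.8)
The plan is to reduce the statement to a finite, purely octonionic identity and then resolve that identity by a representation–theoretic averaging argument. Throughout I fix $x,y\in\operatorname{Re}H$ and abbreviate $a:=\langle Tx,y\rangle$ and $b:=\langle Ty,x\rangle$, so that $m(x,y)=a+b$. The first step is a \emph{master formula}: for every $p\in\mathbb{O}$,
\[
m(x,yp)=\langle Tx,y\rangle\,p+\overline{p}\,\langle Ty,x\rangle=ap+\overline{p}\,b .
\]
This is where para-linearity and the centrality of $\operatorname{Re}H$ enter. On the one hand, since $y\in\operatorname{Re}H$ the inner-product associator vanishes, so Lemma~\ref{lem:inner_product}(3)--(4) gives $\langle Tx,yp\rangle=\langle Tx,y\rangle p$. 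On the other hand, Theorem~\ref{thm:para_linear_char}(3) yields $B_p(T,y)=\sum_{i}T_{\mathbb{R}}([y,p,e_i])e_i=0$, because $[y,p,e_i]=0$ for $y\in\operatorname{Re}H$ by the bimodule axioms; hence $T(yp)=T(y)p$, and a second application of Lemma~\ref{lem:inner_product}(3) with $x\in\operatorname{Re}H$ gives $\langle T(yp),x\rangle=\overline{p}\,\langle Ty,x\rangle$. Adding the two pieces proves the master formula.

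Substituting the master formula into every occurrence of $m(x,y\cdot)$ turns $A_{ij},B_{ij},C_{ij}$ into fixed $\mathbb{R}$-linear expressions in $a$ and $b$, so the whole statement collapses to the claim that one explicit $\mathbb{R}$-linear combination of terms $\alpha(ap+\overline{p}\,b)\beta$ equals $a$ for \emph{all} octonions $a,b$. I would split this into an ``$a$-part'' and a ``$b$-part'' and treat them separately. For the $A$-terms the alternative and flexible laws collapse everything: using $(ce_j)e_j=-c$, $e_i(e_ic)=-c$ and $e_i(ce_i)=(e_ic)e_i=:e_ice_i$, one obtains the clean identity
\[
A_{ij}(x,y)=2a-e_iae_i+e_jbe_j ,
\]
and the resulting single sums are evaluated by the octonionic trace identity $\sum_{i=1}^{7}e_ice_i=-c-6\overline{c}$, which is a restatement of \eqref{eq:real_part_formula}.

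The substantive work is the double-multiplication terms. After substitution their contribution is governed by the two endomorphisms
\[
F(a):=\sum_{\substack{i,j=1\\ i\neq j}}^{7}\Big(-\big(e_i(a(e_ie_j))\big)e_j-e_i\big((a(e_ie_j))e_j\big)\Big),\qquad
G(b):=\sum_{\substack{i,j=1\\ i\neq j}}^{7}\Big(\big(e_i((e_ie_j)b)\big)e_j+e_i\big(((e_ie_j)b)e_j\big)\Big).
\]
Here I would avoid a brute-force Fano-plane expansion by observing that in each summand both $e_i$ and $e_j$ occur exactly twice, so the sums are metric (Casimir) contractions $\sum_i e_i\otimes e_i$ and $\sum_j e_j\otimes e_j$ of the octonion product. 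Since multiplication is $\operatorname{Aut}(\mathbb{O})=G_2$-equivariant and $\sum_i e_i\otimes e_i$ is $G_2$-invariant, $F$ and $G$ are $G_2$-equivariant $\mathbb{R}$-linear maps on $\mathbb{O}$. Because $\mathbb{O}=\mathbb{R}\oplus\operatorname{Im}\mathbb{O}$ with $\operatorname{Im}\mathbb{O}$ the irreducible $7$-dimensional $G_2$-module, Schur's lemma shows $\operatorname{End}_{G_2}(\mathbb{O})$ is two–dimensional, spanned by the identity and the conjugation $c\mapsto\overline{c}$; hence $F(a)=\alpha a+\beta\overline{a}$ and $G(b)=\gamma b+\delta\overline{b}$, with the four constants fixed by the single evaluations at $a=1$ and $a=e_1$. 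A final bookkeeping check then confirms that with the weights $\tfrac1{56},\tfrac1{98},\tfrac12$ the total $b$-part cancels and the total $a$-part equals $a$.

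The main obstacle is precisely the evaluation of $F$ and $G$: non-associativity forbids naive manipulation of the triple products $\big(e_i(a(e_ie_j))\big)e_j$, and a direct expansion over the $42$ ordered pairs is unwieldy. The $G_2$-Schur reduction is what makes this tractable, but it must be justified with care—one has to verify the equivariance claim (that each repeated index genuinely contracts against the invariant metric tensor) and then carry out the test evaluations $F(1),F(e_1),G(1),G(e_1)$ using only the alternative, flexible and Moufang identities rather than full associativity.
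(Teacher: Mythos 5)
Your reduction is sound and, up to the point where the hard computation begins, coincides with the paper's own proof: your ``master formula'' $m(x,yp)=ap+\overline{p}\,b$ is exactly what the paper uses (implicitly, via Lemma~\ref{lem:inner_product} and the vanishing of $B_p(T,y)$ for $y\in\operatorname{Re}H$) when it expands $A_{ij}$, and your identity $A_{ij}=2a-e_iae_i+e_jbe_j$ together with $\sum_{i=1}^{7}e_ice_i=-c-6\overline{c}$ reproduces the paper's $\sum_{i,j}A_{ij}=63a+35b+84(\operatorname{Re}a-\operatorname{Re}b)$. Where you genuinely diverge is the double-multiplication terms. The paper evaluates $B_{ij}$ and $C_{ij}$ termwise with the Moufang identities and the five-term identity \eqref{eq:five_term}, obtaining $B_{ij}=e_iae_i-e_jbe_j-[e_i,e_j,(e_ie_j)b]$ and $C_{ij}=e_iae_i-e_jbe_j+[e_i,e_j,(e_ie_j)a]$, and then sums via $\operatorname{Im}p=-\frac{1}{48}\sum_{i,j}[e_i,e_j,(e_ie_j)p]$. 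You instead note that $F$ and $G$ are contractions against the $G_2$-invariant tensor $\sum_i e_i\otimes e_i$ (the restriction to $i\neq j$ is harmless since $\sum_{i\neq j}=\sum_{i,j}-\sum_{i=j}$ and both pieces are basis-independent contractions) and apply Schur's lemma on $\mathbb{O}=\mathbb{R}\oplus\operatorname{Im}\mathbb{O}$ to force $F(a)=\alpha a+\beta\overline{a}$ and $G(b)=\gamma b+\delta\overline{b}$. That is a legitimate and more conceptual route: it explains \emph{a priori} why the answer must be an $\mathbb{R}$-linear combination of $a,\overline{a},b,\overline{b}$, a fact the paper only discovers after the Moufang manipulations.

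The gap is that you never produce the four constants, and the theorem is a specific numerical identity: the coefficients $\frac{1}{56}$, $\frac{1}{98}$, $\frac{1}{2}$ are certified precisely by the evaluations you defer, so as written this is a correct strategy rather than a proof. The missing arithmetic is finite and does come out right: evaluating at $1$ and $e_1$ (using only alternativity and the Moufang laws) gives $F(a)=-36a-48\overline{a}$ and $G(b)=36b+48\overline{b}$, whence $\sum_{i\neq j}(2A_{ij}+B_{ij}+C_{ij})=144a+24\overline{a}+24b-24\overline{b}$ and $\sum_{i,j}A_{ij}=105a+42\overline{a}-7b-42\overline{b}$, and indeed $\frac{1}{56}(144a+24\overline{a}+24b-24\overline{b})-\frac{1}{98}(105a+42\overline{a}-7b-42\overline{b})-\frac{1}{2}(a+b)=a$. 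You should carry out those two test evaluations and this final linear check explicitly to close the argument.
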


				\begin{proof}
					For simplicity, we denote $A_{ij}(x,y)=A_{ij}$ and similar notations for $B_{ij}, C_{ij}$.
				{Recall} that $x,y\in \re H$.  With Lemma \ref{lem:inner_product} in mind, for all $i,j=1\dots,7$,  we have
					\begin{eqnarray}\label{eq:Aij}
						A_{ij}&=&m(x,y)+e_im(x,y\overline{e_i})+m(x,y\overline{e_j})e_j\\
						&=&\fx{Tx}{y}+\fx{Ty}{x}+e_i(\fx{Tx}{y\overline{e_i}}+\fx{T(y\overline{e_i})}{x})+(\fx{Tx}{y\overline{e_j}}+\fx{T(y\overline{e_j})}{x})e_j\notag\\
						&=&\fx{Tx}{y}+\fx{Ty}{x}+e_i(\fx{Tx}{y}\overline{e_i}+e_i\fx{Ty}{x})+(\fx{Tx}{y}\overline{e_j}+e_j\fx{Ty}{x})e_j\notag\\
						&=&\fx{Tx}{y}+\fx{Ty}{x}+e_i\fx{Tx}{y}\overline{e_i}-\fx{Ty}{x}+\fx{Tx}{y}+e_j\fx{Ty}{x}e_j\notag\\
						&=&2\fx{Tx}{y}+e_i\fx{Tx}{y}\overline{e_i}+e_j\fx{Ty}{x}e_j.\notag
					\end{eqnarray}
					We next compute $B_{ij}$ for $i\neq j$. By direct calculations, we have
					\begin{eqnarray}\label{eq:Bij}
						B_{ij}&=&[e_im(x,y\overline{(e_ie_j)})]e_j\\
						&=&\left[e_i\left(\fx{Tx}{y\overline{(e_ie_j)}}+\fx{T(y\overline{(e_ie_j)})}{x}\right)\right]e_j\notag\\		
						&=&\Big( e_i\big[\fx{Tx}{y}(\overline{e_ie_j})\big]+e_i\big[(e_ie_j)\fx{Ty}{x}\big]  \Big)e_j.\notag
					\end{eqnarray}
					It follows from $i\neq j$ that
					$$\overline{e_ie_j}=-e_ie_j.$$
					We conclude from the Moufang identity
					$$x(a(xy))=(xax)y$$
					that the first term $\Big(e_i\big[\fx{Tx}{y}(\overline{e_ie_j})\big]\Big)e_j$ is
					\begin{eqnarray}\label{eq:1term}
						\Big(	e_i\big[\fx{Tx}{y}(\overline{e_ie_j})\big]\Big)e_j&=&\Big(-e_i\big[\fx{Tx}{y}({e_ie_j})\big]\Big)e_j\\
						&=&\Big(-\big[e_i\fx{Tx}{y}e_i\big]e_j\Big)e_j\notag\\
						&=&e_i\fx{Tx}{y}e_i.\notag
					\end{eqnarray}
					Similarly, we  conclude from the Moufang identity
					$$y(xax)=((yx)a)x$$ that the second term $\Big(e_i\big[(e_ie_j)\fx{Ty}{x}\big]  \Big)e_j$ becomes
					\begin{eqnarray}\label{eq:ass term}
						\Big(e_i\big[(e_ie_j)\fx{Ty}{x}\big]  \Big)e_j&=&	e_i\Big(\big[(e_ie_j)\fx{Ty}{x}\big]  e_j\Big)+[e_i,(e_ie_j)\fx{Ty}{x},e_j]\label{eq:2term}\\
						&=&e_i\Big(e_i\big[e_j\fx{Ty}{x}  e_j\big]\Big)-[e_i,e_j,(e_ie_j)\fx{Ty}{x}]\notag\\
						&=&-e_j\fx{Ty}{x}  e_j-[e_i,e_j,(e_ie_j)\fx{Ty}{x}].\notag
					\end{eqnarray}
					Substituting \eqref{eq:1term} and \eqref{eq:2term}	 into \eqref{eq:Bij}, we obtain for $i\neq j$,
					\begin{eqnarray}\label{eq:B}
						B_{ij}=e_i\fx{Tx}{y}e_i-e_j\fx{Ty}{x}  e_j-[e_i,e_j,(e_ie_j)\fx{Ty}{x}].
					\end{eqnarray}
					Similarly, for $i\neq j$, we have
					\begin{eqnarray}\label{eq:Cij term 1}
						C_{ij}&=&e_i[m(x,y\overline{(e_ie_j)})e_j]\\
						&=&e_i\Big((\fx{Tx}{y\overline{(e_ie_j)}}+\fx{T(y\overline{(e_ie_j)})}{x})e_j\Big)\notag\\
						&=&e_i\Big((\fx{Tx}{y}\overline{(e_ie_j)}+(e_ie_j)\fx{Ty}{x})e_j\Big)\notag\\	&=&-e_i\Big(\big(\fx{Tx}{y}{(e_ie_j)}\big)e_j\Big)+e_i\Big(\big((e_ie_j)\fx{Ty}{x}\big)e_j\Big)\notag\\
						&=&-e_i\Big(\fx{Tx}{y}\big((e_ie_j)e_j\big)+[\fx{Tx}{y},e_ie_j,e_j]\Big)+e_i\Big(e_i(e_j\fx{Ty}{x}e_j)\Big)\notag\\
						&=&e_i\fx{Tx}{y}e_i-e_j\fx{Ty}{x}  e_j-e_i[\fx{Tx}{y},e_ie_j,e_j].\notag
					\end{eqnarray}
					By the Five-Terms  identity \eqref{eq:five_term},
					we have
					\begin{eqnarray}\label{eq:ass term Cij final}
						&&[e_i,e_j,(e_ie_j)\fx{Tx}{y}]\\
						&=&e_i[e_j,e_ie_j,\fx{Tx}{y}]+[e_i,e_j,e_ie_j]\fx{Tx}{y}-[e_ie_j,e_ie_j,\fx{Tx}{y}]+[e_i,e_j(e_ie_j),\fx{Tx}{y}]\notag\\
						&=&e_i[e_j,e_ie_j,\fx{Tx}{y}]\notag\\
						&=&-e_i[\fx{Tx}{y},e_ie_j,e_j]\notag
					\end{eqnarray}
					Substituting   \eqref{eq:ass term Cij final}	 into \eqref{eq:Cij term 1}, we obtain
					\begin{equation}
						C_{ij}=e_i\fx{Tx}{y}e_i-e_j\fx{Ty}{x}  e_j+[e_i,e_j,(e_ie_j)\fx{Tx}{y}].
					\end{equation}
					
					We next compute $\sum A_{ij},	\sum_{i\neq j}(A_{ij}+B_{ij})$ and $	\sum_{i\neq j}(A_{ij}+C_{ij})$ for $x,y \in \re H$.
					We denote $$\alpha:=\fx{Tx}{y},\beta:=\fx{Ty}{x}$$ for simplicity.
					By identity \eqref{prop:real_part}, it is easy to obtain
					$$12 \re x-5x=\sum_{i=1}^7e_ix\overline{e_i}.$$
					It follows that 	\begin{eqnarray}\label{eq:sum A}
						\sum_{i, j=1}^7A_{ij}&=&98\fx{Tx}{y}+	\sum_{i, j=1}^7e_i\fx{Tx}{y}\overline{e_i}+\sum_{i, j=1}^7e_j\fx{Ty}{x}e_j\\
						&=&98\fx{Tx}{y}+7(12\re\fx{Tx}{y}-5\fx{Tx}{y})-7(12\re\fx{Ty}{x}-5\fx{Ty}{x} ) \notag\\
						&=&63\fx{Tx}{y}+35\fx{Ty}{x}+84(\re\fx{Tx}{y}-\re\fx{Ty}{x})
						\notag\\
						&=&63\alpha+35\beta+84(\re\alpha-\re\beta).
						\notag
					\end{eqnarray}
					
					Recall the identity (identity (3.4) in \cite{huo2024aacasubmod}):
					\begin{eqnarray}
						\mathit{Im}\, p&=&-\frac{1}{48}\sum_{i,j=1}^7[e_i,e_j,(e_ie_j)p]\label{eq:im}
					\end{eqnarray}
					for any $p\in \O$.
					It follows that
					\begin{eqnarray}\label{eq:A+B}
						\sum_{i,j=1,\atop i\neq j}^7(A_{ij}+B_{ij})&=&\sum_{i,j=1,\atop i\neq j}^72\fx{Tx}{y}-	\sum_{i,j=1,\atop i\neq j}^7[e_i,e_j,(e_ie_j)\fx{Ty}{x}]\\
						&=&84\fx{Tx}{y}-	\sum_{i, j=1}^7[e_i,e_j,(e_ie_j)\fx{Ty}{x}]\notag\\
						&=&84\fx{Tx}{y}+48\mathit{Im}\,  \fx{Ty}{x}\notag\\
						&=&84\alpha+48\mathit{Im}\,  \beta\notag
					\end{eqnarray}
					and \begin{eqnarray}\label{eq:A+C}
						\sum_{i,j=1,\atop i\neq j}^7(A_{ij}+C_{ij})&=&\sum_{i,j=1,\atop i\neq j}^72\fx{Tx}{y}+	\sum_{i,j=1,\atop i\neq j}^7[e_i,e_j,(e_ie_j)\fx{Tx}{y}]\\
						&=&84\fx{Tx}{y}+	\sum_{i, j=1}^7[e_i,e_j,(e_ie_j)\fx{Tx}{y}]\notag\\
						&=&84\fx{Tx}{y}-48\mathit{Im}\,  \fx{Tx}{y}\notag\\
						&=&84\alpha-48\mathit{Im}\,  \alpha\notag.
					\end{eqnarray}
					Hence
					\begin{eqnarray}
						&&	\sum_{i, j=1}^7A_{ij}-\dfrac{84}{48}\sum_{i,j=1,\atop i\neq j}^7(A_{ij}+B_{ij})-\dfrac{84}{48}	\sum_{i,j=1,\atop i\neq j}^7(A_{ij}+C_{ij})\\
						&=&63\alpha+35\beta+84(\re\alpha-\re\beta)-\dfrac{84}{48}\cdot 84\alpha-84\mathit{Im}\,  \beta-\dfrac{84}{48}\cdot 84\alpha+84\mathit{Im}\,  \alpha\notag\\
						&=&-147\alpha-49\beta.\notag
					\end{eqnarray}
					
					Combining this with
					\begin{eqnarray}
						m(x,y)=\fx{Tx}{y}+\fx{Ty}{x}=\alpha+\beta,
					\end{eqnarray}
					we have
					\begin{eqnarray}
						\sum_{i, j=1}^7A_{ij}-\dfrac{84}{48}\sum_{i,j=1,\atop i\neq j}^7(A_{ij}+B_{ij})-\dfrac{84}{48}	\sum_{i,j=1,\atop i\neq j}^7(A_{ij}+C_{ij})+49m(x,y)=-98\alpha.
					\end{eqnarray}
					Solving for $\alpha$ gives \eqref{eq:polarization-main}.	
				\end{proof}

\begin{eg}[Identity Operator]\label{ex:identity-verif}
For $T = I$ and $x,y \in \operatorname{Re} H$, let $\alpha = \langle x,y\rangle \in \mathbb{R}$. Then:
\begin{itemize}
\item $m(x,y) = 2\alpha$
\item $A_{ij} = 2\alpha$ (since $m(x,y\overline{e}_i) = 0$ by Lemma \ref{lem:inner_product})
\item $B_{ij} = C_{ij} = 0$ (by associator antisymmetry).
\end{itemize}
Substituting in \eqref{eq:polarization-main} {we obtain}:
\[
\langle Ix,y\rangle = \frac{1}{56}{\sum_{i,j=1,\atop i\neq j}^7} 4\alpha - \frac{1}{98}{\sum_{i,j=1}^7} 2\alpha - \alpha = \frac{168\alpha}{56} - \frac{98\alpha}{98} - \alpha = \alpha,
\]
confirming Theorem \ref{thm:polarization} for the identity operator.
\end{eg}

\begin{rem} \textit{The quaternionic case admits a significantly simpler polarization identity due to associativity \cite{semrl1986Hquadratic}. For a quaternionic Hilbert space (considered in the left-module setting) and operators acting on it, the polarization identity reduces to:}
\[
\langle Tx, y\rangle = \frac{1}{2} \left( m(x,y) + e_1 m(x,e_1 y) + m(x,e_2 y)e_2 + e_1 m(x,e_3 y)e_2 \right),
\]
\textit{where $\{1, e_1, e_2, e_3\}$ is a standard basis for the quaternions $\mathbb{H}$ with $e_1e_2 = e_3$. In contrast, the octonionic version (Theorem \ref{thm:polarization}) requires:}
\begin{itemize}
    \item \textit{Summation over all index pairs $(i,j)$,}
    \item \textit{Correction terms $B_{ij}$ and $C_{ij}$,}
    \item \textit{Additional coefficients accounting for non-associative deviations.}
\end{itemize}
\textit{This structural complexity reflects the fundamental role of non-associativity in octonionic functional analysis, necessitating more intricate formulas to recover sesquilinear forms from quadratic restrictions.}
				\end{rem}

\subsection{General Sesquilinear {Reconstruction}.}
 For arbitrary elements $x, y \in H$, we use the real decomposition
\[
x = \sum_{i=0}^{7} e_i x_i, \quad y = \sum_{j=0}^{7} e_j y_j \quad (x_i, y_j \in \operatorname{Re} H)
\]
and the component-wise polarization identity (Theorem \ref{thm:polarization}) to {reconstruct} the full sesquilinear form. Substituting into the expansion \eqref{eq:sesquilinear-decomposition} yields:

\begin{cor}[General Polarization Identity]\label{cor:general-polarization}
{Let $H$ be an Hilbert $\mathbb{O}$-bimodule and let $T$ be a right para-linear operator.}
The sesquilinear form {$\langle Tx,y\rangle$} is completely determined by the quadratic form on the slice cone, i.e.:
\begin{enumerate}
\item For general $x = \sum_{i=0}^7 e_i x_i$, $y = \sum_{j=0}^7 e_j y_j$ ($x_i,y_j \in \operatorname{Re} H$):
\begin{multline*}
\langle Tx, y \rangle = \\
\sum_{\ell,k=0}^{7} \overline{e_\ell} \left( \frac{1}{56} \sum_{\substack{i,j=1 \\ i \neq j}}^{7} \Big( 2A_{ij}(x_\ell, y_k) + B_{ij}(x_\ell, y_k) + C_{ij}(x_\ell, y_k) \Big) - \frac{1}{98} \sum_{i,j=1}^{7} A_{ij}(x_\ell, y_k) - \frac{1}{2} m(x_\ell, y_k) \right) e_k.
\end{multline*}

\item If $\langle Tx,x\rangle = 0$ for all $x \in \mathbb{C}(H)$, then $T = 0$
\end{enumerate}
\end{cor}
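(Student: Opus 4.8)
The plan is to handle the two assertions in turn: part (1) is obtained by substitution, and part (2) is then read off as a consequence. For part (1) I would begin from the component expansion \eqref{eq:sesquilinear-decomposition}, which already reduces the full sesquilinear form to its restriction on $\operatorname{Re} H\times\operatorname{Re} H$:
\[
\langle Tx,y\rangle=\sum_{\ell,k=0}^{7}\overline{e_\ell}\,\langle Tx_\ell,y_k\rangle\,e_k .
\]
Since each pair $(x_\ell,y_k)$ lies in $\operatorname{Re} H\times\operatorname{Re} H$, the octonionic polarization identity (Theorem~\ref{thm:polarization}) applies verbatim to every inner term $\langle Tx_\ell,y_k\rangle$, expressing it through $A_{ij}(x_\ell,y_k)$, $B_{ij}(x_\ell,y_k)$, $C_{ij}(x_\ell,y_k)$ and $m(x_\ell,y_k)$. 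Inserting this into the display above yields exactly the stated formula; this step is purely a bookkeeping of the two index ranges, $\ell,k$ for the octonionic slots and $i,j$ for the Fano summation.

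For part (2) the decisive observation is that every auxiliary quantity occurring in the part~(1) formula is a difference of two values of the quadratic form $Q$ evaluated on the slice cone $\mathbb{C}(H)$. I would first record that $\operatorname{Re} H\subset\mathbb{C}(H)$, obtained by taking $v=0$ in $u+Jv$, so that $m(x_\ell,y_k)=\tfrac12\big(Q(x_\ell+y_k)-Q(x_\ell-y_k)\big)$ already involves only slice-cone points. The remaining $m$-terms hidden inside $A_{ij},B_{ij},C_{ij}$ all have the shape $m(x_\ell,y_k\overline{w})$ with $w\in\{e_i,e_j,e_ie_j\}$. Here I would use that each such $w$ is, possibly after a sign, a unit imaginary octonion (for $e_ie_j$ this needs $i\neq j$, which is precisely where it occurs), so that $\overline{w}=-w\in\mathbb{S}$; since $y_k\in\operatorname{Re} H=\mathscr{Z}(H)$ commutes with octonions, $y_k\overline{w}=\overline{w}\,y_k\in\overline{w}\operatorname{Re} H$, and therefore $x_\ell\pm y_k\overline{w}\in\operatorname{Re} H+\overline{w}\operatorname{Re} H\subset\mathbb{C}(H)$. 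Thus $m(x_\ell,y_k\overline{w})$ is again a difference of $Q$-values on the slice cone.

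With this geometric check in hand, the hypothesis $Q\equiv 0$ on $\mathbb{C}(H)$ forces every $m$-term, and hence every $A_{ij},B_{ij},C_{ij}$, to vanish, so the part~(1) formula collapses to $\langle Tx,y\rangle=0$ for all $x,y\in H$. Choosing $y=Tx$ and invoking the positivity axiom (Definition~\ref{def:pre_hilbert}(3)) gives $\|Tx\|^2=0$, whence $Tx=0$ for every $x$ and $T=0$.

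I expect the only genuinely delicate point to be the geometric verification in part~(2) that each shifted argument $y_k\overline{w}$ stays inside a single slice $\overline{w}\operatorname{Re} H$: once one notes that conjugation carries each relevant $w$ to a unit imaginary octonion (up to sign) and that real elements are central, this is immediate from the definition of $\mathbb{C}(H)$. Everything else is substitution into Theorem~\ref{thm:polarization} together with the standard positivity argument.
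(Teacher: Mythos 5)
Your proposal is correct and follows essentially the same route as the paper, which obtains part (1) by substituting the component-wise polarization identity (Theorem~\ref{thm:polarization}) into the expansion \eqref{eq:sesquilinear-decomposition} and leaves part (2) as an immediate consequence. Your explicit verification that every shifted argument $x_\ell\pm y_k\overline{w}$ with $w\in\{e_i,e_j,e_ie_j\}$ lies in $\mathbb{C}(H)$ (using $\overline{w}=-w\in\mathbb{S}$ and centrality of $\operatorname{Re}H$) is exactly the detail the paper leaves implicit, and it is carried out correctly.
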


 This foundational result establishes $\mathbb{C}(H)$ as the definitive geometric domain for spectral analysis in octonionic Hilbert spaces. The restriction of the quadratic form to the slice cone completely determines para-linear operators, a property that underpins our characterization of self-adjointness (Theorem \ref{thm:slice_cone_char}) and spectral decomposition (Theorem \ref{thm:hilbert-schmidt}).

				\section{Para-Linear Self-Adjoint Operators}\label{sec:selfadjoint}	
Building on the algebraic and geometric foundations developed in the previous sections, this section turns to a central object in operator theory, namely para-linear self-adjoint operators. In classical Hilbert spaces, self-adjointness plays a pivotal role in spectral theory, but its extension to octonionic settings has long been obstructed by non-associativity. Within the framework of para-linearity and the slice cone geometry, we introduce and rigorously characterize self-adjoint para-linear operators. By using the octonionic polarization identity, we establish that self-adjointness corresponds precisely to real-valued quadratic forms on the slice cone. This geometric criterion forms the cornerstone for the spectral decomposition results that follows.

				  Throughout {this section}, let $H$ be a Hilbert $\mathbb{O}$-bimodule equipped with an $\mathbb{O}$-inner product $\langle \cdot, \cdot \rangle$.
				
				\begin{mydef}\label{def:selfadjoint}
					A para-linear operator $T: H \to H$ is \emph{self-adjoint} if $T = T^*$, where $T^*$ is the adjoint defined in \eqref{eq:adjoint_def}.
				\end{mydef}
				\begin{rem}
					The self-adjointness condition $T = T^*$ coincides with the real-linear adjoint since $T^* = T^{*_{\mathbb{R}}}$ when viewed in the real Hilbert space $(H, \langle \cdot, \cdot \rangle_{\mathbb{R}})$.
				\end{rem}
				
				We first characterize self-adjointness for $\mathbb{O}$-linear operators, where associativity simplifies the analysis:
				
				\begin{lemma}\label{lem:Olinear_equiv}
					Let $T: H \to H$ be {(right)} $\mathbb{O}$-linear, i.e., $T(xp) = (Tx)p$ for all $x \in H$ and $p \in \mathbb{O}$. The following are equivalent:
					\begin{enumerate}
						\item[(a)] $T$ is self-adjoint,
						\item[(b)] $\langle Tx, y \rangle = \langle x, Ty \rangle$ for all $x, y \in H$,
						\item[(c)] $\langle Tx, x \rangle \in \mathbb{R}$ for all $x \in H$.
					\end{enumerate}
				\end{lemma}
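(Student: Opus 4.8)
The plan is to exploit the fact that $\mathbb{O}$-linearity collapses the triple associator. Since $T$ is $\mathbb{O}$-linear, Lemma \ref{lem:vanishing_associator}(1) gives $[y,T,x]=0$ for all $x,y\in H$, so the defining relation \eqref{eq:adjoint_def} simplifies to the clean identity $\langle x,T^{*}y\rangle=\langle Tx,y\rangle$ with no correction term. This single identity, which I will call $(\star)$, drives the two easy equivalences, while the remaining implication is fed into the polarization machinery of Section \ref{sec:polarization}.

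For $(a)\Leftrightarrow(b)$ I would argue as follows. If $T=T^{*}$, then $(\star)$ immediately yields $\langle x,Ty\rangle=\langle x,T^{*}y\rangle=\langle Tx,y\rangle$, which is $(b)$. Conversely, combining $(b)$ with $(\star)$ gives $\langle x,Ty\rangle=\langle x,T^{*}y\rangle$ for all $x,y$; since the inner product is nondegenerate (positivity in Definition \ref{def:pre_hilbert}, applied with $x=(T-T^{*})y$), this forces $Ty=T^{*}y$ for every $y$, i.e. $T=T^{*}$. The implication $(b)\Rightarrow(c)$ is shorter still: setting $y=x$ in $(b)$ and invoking $\mathbb{O}$-Hermiticity gives $\langle Tx,x\rangle=\langle x,Tx\rangle=\overline{\langle Tx,x\rangle}$, so $\langle Tx,x\rangle\in\mathbb{R}$.

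The substantive direction is $(c)\Rightarrow(a)$, and here the idea is to reduce the real-diagonal hypothesis to the quadratic-form rigidity already established. Put $S:=T-T^{*}$; it is para-linear, being the difference of the $\mathbb{O}$-linear (hence para-linear) operator $T$ and its adjoint $T^{*}$, which is para-linear by Theorem \ref{thm:T* is paralinear}. From $(\star)$ with $y=x$ together with $\mathbb{O}$-Hermiticity one obtains $\langle Tx,x\rangle=\langle x,T^{*}x\rangle=\overline{\langle T^{*}x,x\rangle}$; since $(c)$ makes $\langle Tx,x\rangle$ real, this equals $\langle T^{*}x,x\rangle$, whence $\langle Sx,x\rangle=\langle Tx,x\rangle-\langle T^{*}x,x\rangle=0$ for every $x\in H$, in particular for all $x\in\mathbb{C}(H)$. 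Corollary \ref{cor:general-polarization}(2) then forces $S=0$, i.e. $T=T^{*}$, which is $(a)$.

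The main obstacle is not the bookkeeping above but the tool it relies on: passing from $\langle Sx,x\rangle=0$ on the slice cone to $S=0$ is exactly the content of the octonionic polarization identity (Theorem \ref{thm:polarization}) and Corollary \ref{cor:general-polarization}(2). All the genuine non-associative difficulty is therefore quarantined in that corollary, and the only delicate points remaining in the present lemma are to confirm that $S$ is indeed para-linear, so that the corollary applies, and to manage the conjugations carefully when converting between $\langle Sx,x\rangle$ and $\langle x,Sx\rangle$ through Hermiticity.
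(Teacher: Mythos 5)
Your proof is correct and follows the same overall strategy as the paper: both use the vanishing of the associator $[y,T,x]$ for $\mathbb{O}$-linear $T$ (Lemma \ref{lem:vanishing_associator}) to reduce \eqref{eq:adjoint_def} to the clean identity $\langle x,T^{*}y\rangle=\langle Tx,y\rangle$, dispose of (a)$\Leftrightarrow$(b) and (b)$\Rightarrow$(c) by the classical nondegeneracy and Hermiticity arguments, and reduce the substantive implication (c)$\Rightarrow$(a) to the polarization machinery of Section \ref{sec:polarization}. The only real difference is how that last implication is packaged. The paper points forward to the relevant direction of the proof of Theorem \ref{thm:slice_cone_char}, which re-runs the polarization identity (Theorem \ref{thm:polarization}) on $\langle x,T^{*}y\rangle$ and $\langle x,Ty\rangle$ and compares the two expansions term by term using the symmetries $\overline{A_{ij}(y,x)}=A_{ij}(x,y)$ and $\overline{B_{ij}(y,x)}=B_{ij}(x,y)$. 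You instead form $S=T-T^{*}$, note that it is para-linear (since $\mathbb{O}$-linearity implies para-linearity and $T^{*}$ is para-linear by Theorem \ref{thm:T* is paralinear}), verify $\langle Sx,x\rangle=0$ on all of $H$, and invoke the determination statement Corollary \ref{cor:general-polarization}(2). Your packaging is slightly more economical and avoids the forward reference and the conjugation bookkeeping, but since Corollary \ref{cor:general-polarization}(2) is itself a direct consequence of Theorem \ref{thm:polarization}, both arguments rest on exactly the same underlying result; there is no gap in either.
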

				
				\begin{proof}
					
					Since $T$ is $\O$-linear, it follows from  \eqref{eq:[yTx]} that
					 {$[y,T,x]=0$} for all $x,y\in H$. Thus $\langle Tx, y \rangle = \langle x, T^*y \rangle$. The equivalence (a)$\Leftrightarrow$(b) and (b)$\Rightarrow$(c) follow classically. For (c)$\Rightarrow$(b), see the sufficient part of the  proof of Theorem \ref{thm:slice_cone_char} below which is independent {from}  Lemma \ref{lem:Olinear_equiv}.
									\end{proof}
				
				For general para-linear operators, the characterization requires careful handling of non-associativity.
				
				\begin{lemma}\label{lem:para_linear_equiv}
					Let $T$ be para-linear and self-adjoint. The following are equivalent:
					\begin{enumerate}
						\item[(a)] $\langle Tx, y \rangle = \langle x, Ty \rangle$ for all $x, y \in H$;
						\item[(b)] $\langle Tx, x \rangle \in \mathbb{R}$ for all $x \in H$;
						\item[(c)] $T$ is $\mathbb{O}$-linear.
					\end{enumerate}
				\end{lemma}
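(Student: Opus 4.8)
The plan is to reduce all three conditions to statements about the triple associator $[y,T,x]$ and then exploit a hidden symmetry. Since $T=T^{*}$, Definition~\ref{def:adjoint_operator} gives $\langle x,Ty\rangle=\langle Tx,y\rangle+[y,T,x]$ for all $x,y\in H$, so the ``defect'' $\langle x,Ty\rangle-\langle Tx,y\rangle$ equals $[y,T,x]$. Consequently condition (a) is literally the assertion that $[y,T,x]=0$ for all $x,y$. Setting $y=x$ and using $\mathbb{O}$-Hermiticity one finds $\overline{\langle Tx,x\rangle}=\langle x,Tx\rangle=\langle Tx,x\rangle+[x,T,x]$, whence $[x,T,x]=\overline{\langle Tx,x\rangle}-\langle Tx,x\rangle$; thus condition (b), i.e. $\langle Tx,x\rangle\in\mathbb{R}$ for all $x$, is equivalent to $[x,T,x]=0$ for all $x$. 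The whole lemma therefore becomes the statement that the diagonal vanishing $[x,T,x]\equiv 0$ is equivalent to the full vanishing $[y,T,x]\equiv 0$, and that both are equivalent to $\mathbb{O}$-linearity.

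First I would dispatch the equivalence (a)$\Leftrightarrow$(c). Using the expansion \eqref{eq:[yTx]}, namely $[y,T,x]=\sum_{i=1}^{7}e_{i}\langle y,B_{e_{i}}(T,x)\rangle_{\mathbb{R}}$, the vanishing of $[y,T,x]$ for all $y$ forces each coefficient $\langle y,B_{e_{i}}(T,x)\rangle_{\mathbb{R}}$ to vanish for all $y$, hence $B_{e_{i}}(T,x)=0$ by non-degeneracy of the inner product. Since $B_{p}(T,x)$ is additive and $\mathbb{R}$-homogeneous in $p$ with $B_{1}(T,x)=0$, this yields $B_{p}(T,x)=0$ for every $p$, i.e. $T$ is $\mathbb{O}$-linear; the converse is exactly Lemma~\ref{lem:vanishing_associator}(1). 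Combined with the computation above, this already gives (a)$\Rightarrow$(b), so the only implication left is (b)$\Rightarrow$(a).

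For the hard implication I would introduce the form $\beta(x,y):=[y,T,x]$, which is valued in $\operatorname{Im}\mathbb{O}$ and, by \eqref{eq:[yTx]} together with the real-linearity of $T$ and of $p\mapsto xp$, is $\mathbb{R}$-bilinear. The decisive observation is that $\beta$ is \emph{symmetric}: by the associator-symmetry identity \eqref{eq:dual[x,y,T]=[y,x,T*]} and self-adjointness $T=T^{*}$ we have $[y,T,x]=[x,T^{*},y]=[x,T,y]$, that is $\beta(x,y)=\beta(y,x)$. Condition (b) says precisely $\beta(x,x)=0$ for all $x$, so the elementary polarization of a symmetric $\mathbb{R}$-bilinear form, $2\beta(x,y)=\beta(x+y,x+y)-\beta(x,x)-\beta(y,y)$, forces $\beta(x,y)=0$ for all $x,y$, which is (a). This closes the cycle (c)$\Rightarrow$(a)$\Rightarrow$(b)$\Rightarrow$(a)$\Rightarrow$(c).

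The main obstacle is exactly the step (b)$\Rightarrow$(a): a priori, reality of the quadratic form $\langle Tx,x\rangle$ on the diagonal controls only $[x,T,x]$, and in a non-associative setting one cannot naively polarize a sesquilinear expression. The resolution is to recognize that the relevant object $\beta$ is a genuine \emph{symmetric $\mathbb{R}$-bilinear} form rather than a sesquilinear one—symmetry being supplied jointly by self-adjointness and identity \eqref{eq:dual[x,y,T]=[y,x,T*]}—so that ordinary real polarization applies and diagonal vanishing propagates to the whole form. I would take care to record that $\beta$ takes values in the fixed real vector space $\operatorname{Im}\mathbb{O}$, so that the polarization is carried out componentwise over $\mathbb{R}$ and no octonionic multiplication interferes.
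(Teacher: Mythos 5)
Your proposal is correct and follows essentially the same route as the paper: both reduce (a) to the vanishing of the triple associator $[y,T,x]$, reduce (b) to the vanishing of $[x,T,x]$, and then combine the symmetry $[y,T,x]=[x,T^{*},y]=[x,T,y]$ from Lemma~\ref{lem:associator_symmetry} with real polarization of the diagonal to upgrade (b) to (a), finally recovering $\mathbb{O}$-linearity from $[y,T,x]\equiv 0$ via \eqref{eq:[yTx]} and non-degeneracy. The only difference is cosmetic: the paper phrases the key step as antisymmetry (from polarizing $[x+y,T,x+y]=0$) clashing with symmetry, while you phrase it as a symmetric $\mathbb{R}$-bilinear form vanishing on the diagonal, which is the same computation.
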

				
				\begin{proof}
					(c)$\Rightarrow$(a) follows from Lemma \ref{lem:Olinear_equiv}, and (a)$\Rightarrow$(b) is immediate. We prove (b)$\Rightarrow$(c). Since $T$ is self-adjoint,
				 {	\[
					\langle Tx, x \rangle = \langle x, T^*x \rangle + [x, T, x] = \langle x, Tx \rangle + [x, T, x].
					\]}
					Condition (b) implies $\im (\langle Tx, x \rangle) = 0$, so 	{$[x, T, x] = 0$}. For arbitrary $x, y \in H$, the identity 	 {$[x + y, T, x + y] = 0$} yields
					 {	\[
					[ y, T,x] = -[ x, T,y] = -[ y, T^*,x] = -[ y, T,x],
					\]}
					where we used Lemma \ref{lem:associator_symmetry}. Thus 	 {$[ y, T,x] = 0$} for all $x, y \in H$. By  \eqref{eq:[yTx]}, $\langle B_{e_j}(T, x), y \rangle_{\mathbb{R}} = 0$ for all $j$ and $y$, so $B_{e_j}(T, x) = 0$. Hence $T$ is $\mathbb{O}$-linear.
				\end{proof}
				
				\begin{rem}\label{rem:nonlinear_exist}
	{A consequence of the previous result is that}	if a {para-linear,} self-adjoint $T$ is not $\mathbb{O}$-linear, there exists $x \in H$ such that $\langle Tx, x \rangle \notin \mathbb{R}$.
				\end{rem}
				
				The central result of this section provides a characterization of self-adjointness through the \emph{slice cone} $\mathbb{C}(H) := \bigcup_{J \in \mathbb{S}} (\operatorname{Re} H + J \operatorname{Re} H)$.
				
				\begin{thm}\label{thm:slice_cone_char}
					A para-linear operator $T: H \to H$ is self-adjoint if and only if $\langle Tx, x \rangle \in \mathbb{R}$ for all $x \in \mathbb{C}(H)$.
				\end{thm}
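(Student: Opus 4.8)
The plan is to reduce the entire statement to a single vanishing result for the triple associator on the slice cone, after which both implications follow at once. Write $S := T - T^{*}$, which is again para-linear by Theorem \ref{thm:T* is paralinear}. The first step is to pin down the exact discrepancy between $\langle Tz,z\rangle$ being real and $S$ vanishing on the diagonal. Applying the adjoint definition \eqref{eq:adjoint_def} to $T^{*}$ (whose adjoint is $T^{**}=T$ by Remark \ref{rem:T*=T*R}), and combining $\mathbb{O}$-Hermiticity with the symmetry $[z,T^{*},z]=[z,T,z]$ obtained from Lemma \ref{lem:associator_symmetry}, I would establish
\[
\langle (T-T^{*})z,z\rangle = 2\operatorname{Im}\langle Tz,z\rangle + [z,T,z], \qquad z\in H.
\]
Everything then hinges on the behaviour of the triple associator $[z,T,z]$.

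The key step, and the main obstacle, is to prove that $[z,T,z]=0$ for every $z\in\mathbb{C}(H)$. Since $[\,\cdot\,,T,\,\cdot\,]$ is $\mathbb{R}$-bilinear in its outer arguments and vanishes as soon as one of them lies in $\operatorname{Re} H$ (Lemma \ref{lem:vanishing_associator}), expanding $z=u+Jv$ with $u,v\in\operatorname{Re} H$ collapses $[z,T,z]$ to the single term $[Jv,T,Jv]$. To evaluate this I would invoke the characterization of para-linearity: by Theorem \ref{thm:para_linear_char}(3), $B_{e_{j}}(T,Jv)=\sum_{k}\operatorname{Re}\big(T([Jv,e_{j},e_{k}])\big)e_{k}$, while the tensor decomposition $H\cong\operatorname{Re} H\otimes_{\mathbb{R}}\mathbb{O}$ (Theorem \ref{thm:tensor_decomp}) together with the centrality of $\operatorname{Re} H$ gives $[Jv,e_{j},e_{k}]=v\,[J,e_{j},e_{k}]$, where $[J,e_{j},e_{k}]$ is the purely octonionic associator. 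Substituting into \eqref{eq:[yTx]} and evaluating the real inner products via Theorem \ref{thm:tensor_decomp}(2) reduces $[Jv,T,Jv]$ to an expression of the form $\sum_{j}e_{j}\,\tau\big(\sum_{k}J_{k}[J,e_{j},e_{k}]\big)$, where $\tau(c):=\langle v,T(vc)\rangle_{\mathbb{R}}$ is a real-linear functional of $c\in\mathbb{O}$. The decisive collapse is then purely algebraic: since $\sum_{k}J_{k}e_{k}=J$ and the octonionic associator is alternating, $\sum_{k}J_{k}[J,e_{j},e_{k}]=[J,e_{j},J]=0$, so each inner sum vanishes and hence $[Jv,T,Jv]=0$.

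With this key lemma in hand the displayed identity becomes $\langle (T-T^{*})z,z\rangle = 2\operatorname{Im}\langle Tz,z\rangle$ for all $z\in\mathbb{C}(H)$. For the forward implication, $T=T^{*}$ forces the left-hand side to vanish, whence $\operatorname{Im}\langle Tz,z\rangle=0$, i.e.\ $\langle Tz,z\rangle\in\mathbb{R}$, on the slice cone. For the converse, the hypothesis $\langle Tz,z\rangle\in\mathbb{R}$ on $\mathbb{C}(H)$ makes $\langle(T-T^{*})z,z\rangle=0$ for every $z\in\mathbb{C}(H)$; since $T-T^{*}$ is para-linear, the general polarization identity (Corollary \ref{cor:general-polarization}(2)) forces $T-T^{*}=0$, that is, $T=T^{*}$.

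I expect the collapse $\sum_{k}J_{k}[J,e_{j},e_{k}]=[J,e_{j},J]=0$ to be the crux. It is precisely here that the slice-cone hypothesis is essential: a general $x\in H$ is not of the form $u+Jv$, so the reduction to a single associator fails and $\langle Tx,x\rangle$ need not be real even for self-adjoint $T$, in agreement with Remark \ref{rem:nonlinear_exist}. This is also the one point where the non-associativity of $\mathbb{O}$ must be confronted directly rather than deferred to an earlier lemma, so I would budget most of the care there and treat the remaining reductions as bookkeeping.
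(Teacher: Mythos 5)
Your proof is correct, but it follows a genuinely different route from the paper's. The paper proves the forward implication by directly expanding $\langle Tz,z\rangle$ for $z=x+yJ$ via Lemma \ref{lem:inner_product}, and proves the converse by working inside the polarization formula itself: the reality hypothesis on $\mathbb{C}(H)$ yields $B_{ij}=C_{ij}$ together with the conjugation symmetries $\overline{A_{ij}(y,x)}=A_{ij}(x,y)$ and $\overline{B_{ij}(y,x)}=B_{ij}(x,y)$, from which $\langle x,T^{*}y\rangle=\langle x,Ty\rangle$ for $x,y\in\operatorname{Re} H$ and hence $T=T^{*}$ by Lemma \ref{cor: A_p(x,f)=0 and f(px)=pf(x)}. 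You instead isolate the identity $\langle(T-T^{*})z,z\rangle=2\operatorname{Im}\langle Tz,z\rangle+[z,T,z]$ (which checks out: $[z,T,z]$ is purely imaginary, so conjugating \eqref{eq:adjoint_def} gives exactly this) together with the structural fact that $[z,T,z]=0$ for \emph{every} $z\in\mathbb{C}(H)$ and every para-linear $T$, and then delegate the converse to Corollary \ref{cor:general-polarization}(2) applied to the para-linear operator $T-T^{*}$. Your key lemma is the one genuinely new ingredient and it is sound: the reduction $[z,T,z]=[Jv,T,Jv]$ via Lemma \ref{lem:vanishing_associator}, the identity $[Jv,e_{j},e_{k}]=v\,[J,e_{j},e_{k}]$ from centrality of $\operatorname{Re} H$, and the collapse $\langle Jv,B_{e_{j}}(T,Jv)\rangle_{\mathbb{R}}=\sum_{k}J_{k}\,\langle v,T_{\mathbb{R}}(v[J,e_{j},e_{k}])\rangle=\tau([J,e_{j},J])=0$ all verify. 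This lemma is not stated explicitly in the paper, and it nicely explains why the slice cone is precisely the locus where the quadratic form of a self-adjoint operator is forced to be real, in agreement with Remark \ref{rem:nonlinear_exist}. The trade-off is that the paper's argument stays entirely inside the polarization computation, whereas yours buys a shorter and more conceptual deduction at the cost of one extra associator computation, while still resting on the polarization identity through Corollary \ref{cor:general-polarization}(2).
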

				
				To prove this  result, we  need to establish  the following lemma {at first}.
				\begin{lemma}		
					Let  $T $ be a   para-linear operator on $H$.	{Let us set}
$$Q(x):=\fx{Tx}{x},\qquad 2m(x,y):=Q(x+y)-Q(x-y).$$  Then for all $x,y\in \re H$ and all $J\in \S$, we have
					\begin{eqnarray}\label{eq:mJ}
						m(x,yJ)+m(y,xJ)=[m(x,y),J].
					\end{eqnarray}
				\end{lemma}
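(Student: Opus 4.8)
The plan is to reduce everything to the bilinear expression $m(x,y)=\fx{Tx}{y}+\fx{Ty}{x}$, which follows directly from the definition $2m(x,y)=Q(x+y)-Q(x-y)$ together with the real-bilinearity of the map $(u,v)\mapsto\fx{Tu}{v}$. Crucially, this identity is valid for \emph{all} $x,y\in H$, so it may legitimately be applied with the arguments $xJ,yJ\notin\re H$. Writing $\alpha:=\fx{Tx}{y}$ and $\beta:=\fx{Ty}{x}$, so that $m(x,y)=\alpha+\beta$, I would expand
\[
m(x,yJ)+m(y,xJ)=\fx{Tx}{yJ}+\fx{T(yJ)}{x}+\fx{Ty}{xJ}+\fx{T(xJ)}{y}
\]
and then simplify the four terms separately.

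The decisive simplification is that, since $x,y\in\re H$, one has $[x,J,e_i]=[y,J,e_i]=0$ for every $i$ (real elements lie in the center, so the right associators built from them vanish), whence Theorem~\ref{thm:para_linear_char}(3) gives $B_J(T,x)=B_J(T,y)=0$, i.e. $T(xJ)=T(x)J$ and $T(yJ)=T(y)J$. This turns the two mixed terms into $\fx{T(y)J}{x}$ and $\fx{T(x)J}{y}$. Now I apply the inner-product identities of Lemma~\ref{lem:inner_product}: part (4) (vanishing of $B_p$ when one slot is real) yields $\fx{Tx}{yJ}=\alpha J$ and $\fx{Ty}{xJ}=\beta J$ for the first two terms, while part (3) together with $\overline J=-J$ and the same vanishing gives $\fx{T(y)J}{x}=\overline J\fx{Ty}{x}=-J\beta$ and $\fx{T(x)J}{y}=-J\alpha$. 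Summing the four contributions produces $(\alpha+\beta)J-J(\alpha+\beta)=[m(x,y),J]$, which is the desired identity~\eqref{eq:mJ}.

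I expect the only genuinely delicate point to be the careful bookkeeping of the correction associators $B_J$: one must confirm that every such term vanishes — both the \emph{operator} associators $B_J(T,x),B_J(T,y)$ (via para-linearity and the centrality of $\re H$) and the \emph{inner-product} associators $B_J(Tx,y)$ and $B_J(Ty,x)$ that arise when moving $J$ across $\fx{\cdot}{\cdot}$ (via Lemma~\ref{lem:inner_product}(4), since the remaining slot is real). The sign flip $\overline J=-J$ for $J\in\S\subset\im\O$ is what converts right multiplication by $J$ in the first argument into left multiplication, and it is precisely this asymmetry between the two slots that produces the commutator $[m(x,y),J]$ rather than $0$.
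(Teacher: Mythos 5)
Your proof is correct and follows essentially the same route as the paper's: expand $m(x,yJ)+m(y,xJ)$ via the bilinear formula $m(u,v)=\fx{Tu}{v}+\fx{Tv}{u}$, use $T(xJ)=T(x)J$, $T(yJ)=T(y)J$ (para-linearity plus $x,y\in\re H$) and Lemma~\ref{lem:inner_product}(3)--(4) to pull $J$ out of each slot, obtaining $m(x,y)J+\overline{J}\,m(x,y)=[m(x,y),J]$. The paper's proof is just a terser version of the same computation, and your explicit bookkeeping of the vanishing associators $B_J$ fills in exactly the steps the paper leaves implicit.
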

				\begin{proof}
					Note that $x,y\in \re H$. In view of Lemma \ref{lem:inner_product}, we have
					\begin{eqnarray*}
						m(x,yJ)+m(y,xJ)&=&\fx{Tx}{yJ}+\fx{T(yJ)}{x}+\fx{Ty}{xJ}+\fx{T(xJ)}{y}\\
						&=&(\fx{Tx}{y}+\fx{Ty}{x})J+\overline{J}(\fx{Ty}{x}+\fx{Tx}{y})\\
						&=&[m(x,y),J].			
					\end{eqnarray*}
				\end{proof}
				
				\begin{proof}[Proof of Theorem \ref{thm:slice_cone_char}]
					We  first prove the sufficiency.
					Assume that $T$  is self-adjoint.
					Then for all $x\in \re H$, we have
					$$\fx{Tx}{x}=\fx{x}{T^*x}=\fx{x}{Tx}=\overline{\fx{Tx}{x}}$$ and hence $\fx{Tx}{x}\in \R$.
					For any $z=x+yJ\in \mathbb{C} (H)$ with $x,\,y\in\re H$ and $J\in \mathbb{S}$, we conclude from Lemma \ref{lem:inner_product} that
					\begin{eqnarray*}
						\fx{Tz}{z}&=&\fx{Tx}{x}+\fx{T(yJ)}{x}+\fx{Tx}{yJ}+\fx{T(yJ)}{yJ}\\
										&=&\fx{Tx}{x}+\abs{J}^2\fx{Ty}{y}+2\re(\fx{Tx}{y}J)\in \R.
					\end{eqnarray*}

					Next we  prove the necessity.  Fix $x$, $y\in \re H$ arbitrarily. 	Let 	$$A_{ij}=m(x,y)+e_im(x,y\overline{e_i})+m(x,y\overline{e_j})e_j,$$ and $$B_{ij}=[e_im(x,y\overline{(e_ie_j)})]e_j,\quad C_{ij}=e_i[m(x,y\overline{(e_ie_j)})e_j]$$ as usual.
					Since  $\fx{Tx}{x}\in \R \text{ for all }x\in\mathbb{C} (H)$, it follows that
					$$2m(x,ye_i):=Q(x+ye_i)-Q(x-ye_i)\in \R$$ for $i=0,\dots,7$. Hence
					\begin{eqnarray}\label{eq:BC}
						B_{ij}(x,y)=C_{ij}(x,y).
					\end{eqnarray}

					Noticing that $x,y\in \re H$,  it follows  that $	\fx{x}{T^*y}=\fx{Tx}{y}.$
					In view of  Theorem \ref{thm:polarization}, we conclude from \eqref{eq:BC} that
					\begin{eqnarray}\label{eq:<xT*y>}
					\qquad	\fx{x}{T^*y}=\fx{Tx}{y}=\dfrac{1}{56}	\sum_{i,j=1,\atop i\neq j}^7(2A_{ij}+2B_{ij})(x,y)-\dfrac{1}{98}	\sum_{i, j=1}^7A_{ij}(x,y)-\dfrac{1}{2}m(x,y).
					\end{eqnarray}
					We now show that $$\fx{x}{T^*y}=\fx{x}{Ty}.$$
					To this end, we compute $	\overline{A_{ij}(y,x)}$. Since $m(x,ye_i)\in \R$ for each $i=0,\dots,7$, it follows from  \eqref{eq:mJ} that
					\begin{eqnarray*}
						\overline{A_{ij}(y,x)}&=&m(y,x)-e_im(y,x\overline{e_i})-e_jm(y,x\overline{e_j})\\
						&=&m(y,x)+e_im(x,y\overline{e_i})+e_jm(x,y\overline{e_j}).
					\end{eqnarray*}
					Noticing that $$m(x,y)=m(y,x),$$we conclude  that
					\begin{eqnarray}\label{eq:Aij=bar}
						\overline{A_{ij}(y,x)}=m(x,y)+e_im(x,y\overline{e_i})+e_jm(x,y\overline{e_j})=A_{ij}(x,y).
					\end{eqnarray}
					Similarly, we also have
					\begin{eqnarray}\label{eq:Bij=bar}
						\overline{B_{ij}(y,x)}=B_{ij}(x,y).
					\end{eqnarray}
					Utilizing \eqref{eq:Aij=bar} and \eqref{eq:Bij=bar}, we conclude  from  Theorem \ref{thm:polarization} again   that
					\begin{eqnarray*}
						\fx{x}{Ty}&=&\overline{\fx{Ty}{x}}\\
						&=&\dfrac{1}{56}	\sum_{i,j=1,\atop i\neq j}^72\overline{A_{ij}(y,x)}+2\overline{B_{ij}(y,x)}-\dfrac{1}{98}	\sum_{i, j=1}^7\overline{A_{ij}(y,x)}-\dfrac{1}{2}\overline{m(y,x)}\\
						&=&\dfrac{1}{56}	\sum_{i,j=1,\atop i\neq j}^7(2A_{ij}+2B_{ij})(x,y)-\dfrac{1}{98}	\sum_{i, j=1}^7A_{ij}(x,y)-\dfrac{1}{2}m(x,y).		
					\end{eqnarray*}
					Combing this with \eqref{eq:<xT*y>}, we obtain that
					$${\fx{x}{T^*y}}=\fx{x}{Ty}$$  for arbitrarily fixed $x,\, y\in \re H$.
					This yields $Ty=T^*y$ for arbitrarily fixed $y\in \re H$.
					Note that $T^*$ is also para-linear by Theorem \ref{thm:T* is paralinear}.  Thus we get from   Lemma \ref{cor: A_p(x,f)=0 and f(px)=pf(x)} that $T=T^*$.
				\end{proof}

				\begin{rem}
					Theorem \ref{thm:slice_cone_char} holds analogously for quaternionic Hilbert bimodules, emphasizing the universality of the slice cone characterization.
				\end{rem}

\section{Strong Eigenvalues and Eigenvectors}

The  non-associativity of octonions {entails} a refined approach to spectral theory, particularly regarding the eigenvalue problem. The classical eigenvalue equation \( T z = \lambda z \) fails to maintain coherence within the framework of para-linear operators, since the left multiplication by \( \lambda \) does not align with right para-linearity. Thus, we focus on slice paravectors, elements contained entirely within a single complex slice, and introduce the spectral parameter acting on the right. This naturally motivates the concept of a \textit{strong eigenpair}.

{In this section,} \(H\) denotes a Hilbert \(\mathbb O\)-bimodule, {while $\mathbb C(H)$ denotes the slice cone}.

\subsection{Definition and Fundamental Properties}

\begin{mydef}[Strong eigenpair]
Let \(T \in \mathscr{B}_{\mathcal{R}\mathcal{O}}(H) \). A nonzero slice paravector \(  z \in \mathbb C(H) \) is called a \textbf{strong eigenvector} associated to a \textbf{strong eigenvalue} \( \lambda \in \mathbb O \) if
\[
T z = z \lambda.
\]
\end{mydef}

\begin{rem}
The choice \( T z = z\lambda \) is consistent with right para-linearity, preserving slice coherence. Restricting \( z \) to \( \mathbb C(H) \) ensures compatibility with the octonionic geometric structure, thus establishing a meaningful spectral theory.

In this paper we have been considering the notion of right para-linearity. Although the notions of left and right para-linearity are different, but somewhat equivalent from a mathematical point of view, for physical applications the side on which linearity (and so para-linearity) is considered is crucial. This is relevant in quaternionic quantum mechanics, see the well-known Adler's book \cite{MR1333599}, in which it is shown that right linearity and right eigenvalues are the relevant concepts while the left versions seem less useful. This is the ground for our choice of the side of para-linearity.

{
We note that  quaternionic spectral theory is well established since the definition of the S-spectrum in 2006 (see the Introduction of  \cite{MR3887616} for some historical remarks). Since then, there has been an extensive development of the theory, including new functional calculi and spectral decompositions tailored specifically to quaternionic linear operators, see e.g.  \cite{Colombo2008funcalculus,Colombo2011ADVqevol,MR3587903,Alpay2016JMP,Colombo2019normal},
the books \cite{colombo2011noncomfunctcalculus,MR3887616,MR3967697} and the references therein.
This theory makes use of the appropriate notion of spectrum for quaternionic operators and allows an associative-based spectral decomposition approach.}
\end{rem}

\begin{eg}
Consider the octonionic Hermitian matrix
\[
M = \begin{pmatrix}
t & p \\
 {\overline{p} }& s
\end{pmatrix},\quad t, s \in \mathbb{R},\, p \in\mathbb  O \setminus \{0\}.
\]
Selecting \( J \in \mathbb S \) such that \( p \in \mathbb{C}_J \), the restriction of \( M \) to \( \mathbb{C}_J^2 \subset \mathbb C(H) \) with $H=\mathbb O^2$ yields a classical complex Hermitian operator, whose eigenvalues are strong eigenvalues, each admitting strong eigenvectors within \( \mathbb{C}_J^2 \).
\end{eg}

\subsection{Reality of Strong Eigenvalues for Self-adjoint Operators}

{Theorem 8.6 shows that self-adjoint operators are characterized by the fact that the quadratic form \( \langle T x, x \rangle \) is real-valued, for all \( x \in \mathbb C(H) \). The following result is an immediate consequence:}

\begin{cor}\label{StrongEigen}
Let {\(T \in \mathscr{B}_{\mathcal{R}\mathcal{O}}(H) \) and} \( T = T^* \) and suppose \( T z = z \lambda \), where \( z \neq 0 \) is a strong eigenvector. Then \( \lambda \in \mathbb{R} \).
\end{cor}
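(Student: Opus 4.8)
The plan is to exploit the strong eigenvalue relation $Tz = z\lambda$ together with the self-adjointness criterion supplied by Theorem~\ref{thm:slice_cone_char}. Since $z \in \mathbb{C}(H)$ is a nonzero slice paravector, I may evaluate the quadratic form $\langle Tz, z\rangle$ directly: substituting the eigenvalue equation gives $\langle Tz, z\rangle = \langle z\lambda, z\rangle$. The crucial input is that, because $T = T^*$ is self-adjoint, Theorem~\ref{thm:slice_cone_char} guarantees $\langle Tz, z\rangle \in \mathbb{R}$ for every $z \in \mathbb{C}(H)$. Hence $\langle z\lambda, z\rangle \in \mathbb{R}$ as well, and the task reduces to extracting information about $\lambda$ from the reality of this scalar.

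The next step is to rewrite $\langle z\lambda, z\rangle$ using the inner-product identities of Lemma~\ref{lem:inner_product}. By part (3) of that lemma, $\langle z\lambda, z\rangle = \overline{\lambda}\,\langle z, z\rangle - B_\lambda(z,z)$, and since $B_\lambda(z,z) = 0$ by \eqref{eq:Bp(uv)=-Bp(vu)} (the second associator vanishes on the diagonal), this simplifies to
\[
\langle z\lambda, z\rangle = \overline{\lambda}\,\langle z, z\rangle = \overline{\lambda}\,\|z\|^2.
\]
Because $z \neq 0$, positivity of the inner product gives $\|z\|^2 > 0$ a strictly positive real number. Therefore $\overline{\lambda}\,\|z\|^2 \in \mathbb{R}$ forces $\overline{\lambda} \in \mathbb{R}$, and consequently $\lambda \in \mathbb{R}$, as claimed.

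I expect the main subtlety to lie not in any deep computation but in correctly invoking the side conventions: the eigenvalue acts on the right ($z\lambda$), and one must apply the correct version of the inner-product correction formula from Lemma~\ref{lem:inner_product}(3) rather than the left-multiplication analogue of Lemma~\ref{lem:left_mult_adjoint}. The vanishing of $B_\lambda(z,z)$ on the diagonal, recorded in \eqref{eq:Bp(uv)=-Bp(vu)}, is exactly what collapses the non-associative correction term and makes the argument as clean as in the classical case; without it one would be left with a spurious imaginary contribution. The reality criterion of Theorem~\ref{thm:slice_cone_char} does all the heavy lifting, so the corollary is genuinely immediate once these identities are assembled.
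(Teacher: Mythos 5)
Your proposal is correct and follows essentially the same route as the paper: invoke Theorem~\ref{thm:slice_cone_char} to get $\langle Tz,z\rangle\in\mathbb{R}$, substitute $Tz=z\lambda$, and apply Lemma~\ref{lem:inner_product}(3) together with $B_\lambda(z,z)=0$ to obtain $\overline{\lambda}\,\|z\|^2\in\mathbb{R}$, whence $\lambda\in\mathbb{R}$. Your explicit remark that the vanishing of $B_\lambda(z,z)$ from \eqref{eq:Bp(uv)=-Bp(vu)} is what kills the correction term is a detail the paper leaves implicit, but the argument is identical.
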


\begin{proof}
By Theorem \ref{thm:slice_cone_char}, $\langle Tz, z\rangle \in \mathbb{R}$. Using self-adjointness and Lemma 	\ref{lem:inner_product}(3), we get
\[
\langle Tz, z\rangle = \langle z\lambda, z\rangle = \overline{\lambda} \langle z, z\rangle.
\]
Since $\langle z, z\rangle > 0$, it follows that $\overline{\lambda} \in \mathbb{R}$, so $\lambda \in \mathbb{R}$.
\end{proof}

\subsection{Orthogonality and Weak Associative Bases}

The properties of para-linear projections necessitate a refined notion of orthogonality to ensure that spectral decompositions behave as in the classical case within slices.

{We begin with the notion of weak associative  orthogonality  \cite{huoqinghai2021tensor}.}
\begin{mydef}
A family \( \{x_\alpha\}_{\alpha \in \Lambda} \subset H \) is said to be:
\begin{itemize}
    \item \textbf{orthonormal} if \( \langle x_\alpha, x_\beta \rangle = \delta_{\alpha\beta} \);
    \item \textbf{weak associative orthonormal} if it is orthonormal and satisfies \( B_p(x_\alpha,x_\beta)=0 \) for all \( p \in\mathbb  O \);
    \item a \textbf{weak associative orthonormal basis} if it is maximal with respect to these properties.
\end{itemize}
\end{mydef}

This formulation ensures the validity of Parseval's identity,  {as in the} classical Hilbert space theory.

\begin{thm}[Parseval    Identity {\cite[Thm.~5.7]{huoqinghai2021tensor}}]\label{thm:parseval}
Let \(\{x_\alpha\}\) be a weak associative orthonormal basis of \(H\). For every \( x \in H \),
\[
x = \sum_{\alpha} x_\alpha \langle x_\alpha, x \rangle, \quad \|x\|^2 = \sum_{\alpha} |\langle x_\alpha, x \rangle|^2.
\]
\end{thm}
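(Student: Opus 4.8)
The plan is to transfer the problem to the underlying real Hilbert space $(H,\langle\cdot,\cdot\rangle_{\mathbb{R}})$, where $\langle x,y\rangle_{\mathbb{R}}=\operatorname{Re}\langle x,y\rangle$, and to apply the classical real Parseval theorem to the enlarged family $\{x_\alpha e_i : \alpha\in\Lambda,\ i=0,\dots,7\}$. Since $\langle x,x\rangle\in\mathbb{R}$, the octonionic norm coincides with the real-Hilbert norm, $\|x\|^2=\langle x,x\rangle_{\mathbb{R}}$, so no information is lost in this passage, and the octonionic statement will be recovered at the end by repackaging the real Fourier coefficients into octonionic ones.

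First I would show that $\{x_\alpha e_i\}$ is a real-orthonormal system. By Lemma \ref{lem:inner_product}(3), $\langle x_\alpha, x_\beta e_j\rangle = \langle x_\alpha, x_\beta\rangle e_j - B_{e_j}(x_\alpha,x_\beta) = \delta_{\alpha\beta}e_j$, where the second associator vanishes precisely by the weak associative orthonormality hypothesis. Taking real parts in the companion identity $\langle x_\alpha e_i, x_\beta e_j\rangle = \overline{e_i}\langle x_\alpha, x_\beta e_j\rangle - B_{e_i}(x_\alpha, x_\beta e_j)$, and using that every second associator is purely imaginary (Lemma \ref{lem:inner_product}(1)), one obtains $\langle x_\alpha e_i, x_\beta e_j\rangle_{\mathbb{R}} = \delta_{\alpha\beta}\operatorname{Re}(\overline{e_i}e_j) = \delta_{\alpha\beta}\delta_{ij}$. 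Thus $\{x_\alpha e_i\}$ is genuinely orthonormal in $(H,\langle\cdot,\cdot\rangle_{\mathbb{R}})$.

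The crux, and the step I expect to be the main obstacle, is to promote the maximality of $\{x_\alpha\}$ as a weak associative orthonormal basis to the totality of $\{x_\alpha e_i\}$ in the real Hilbert space. Suppose $z\in H$ satisfies $\langle z, x_\alpha e_i\rangle_{\mathbb{R}}=0$ for all $\alpha,i$. A real-part computation as above shows this is equivalent to $\langle z, x_\alpha\rangle=0$ for every $\alpha$, hence, by $\mathbb{O}$-Hermiticity, $\langle x_\alpha, z\rangle = 0$. If $z\neq 0$ one would like to normalize it and adjoin it to the family, contradicting maximality; the delicate point is verifying the weak associativity conditions $B_p(x_\alpha, z)=0$ and $B_p(z,x_\alpha)=0$, which do not follow from orthogonality alone. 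Here I would exploit that $\langle z p, x_\alpha\rangle_{\mathbb{R}}=0$ for every $p$ (again via Lemma \ref{lem:inner_product}(2) together with the vanishing real part of $B_p$), so that the entire right-octonionic span of $z$ is real-orthogonal to each $x_\alpha$; combined with Lemma \ref{lem:re=re*}(2) and the tensor decomposition $H\cong\operatorname{Re}H\otimes_{\mathbb{R}}\mathbb{O}$ of Theorem \ref{thm:tensor_decomp}, this should force the desired vanishing and hence $z=0$. Establishing this totality cleanly is where the non-associative bookkeeping is heaviest.

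With orthonormality and totality in hand, the classical real Parseval theorem gives the unconditionally convergent expansions $x=\sum_{\alpha,i} x_\alpha e_i\,\langle x_\alpha e_i, x\rangle_{\mathbb{R}}$ and $\|x\|^2=\sum_{\alpha,i}|\langle x_\alpha e_i, x\rangle_{\mathbb{R}}|^2$. It then remains to repackage the coefficients. Writing $c_\alpha := \langle x_\alpha, x\rangle$, a real-part computation yields $\langle x_\alpha e_i, x\rangle_{\mathbb{R}}=\operatorname{Re}(c_\alpha\overline{e_i})$, namely the $i$-th real coordinate $(c_\alpha)_i$ of $c_\alpha$. Summing over $i$ and using that real scalars associate, $\sum_{i=0}^7 x_\alpha e_i\,(c_\alpha)_i = x_\alpha\sum_{i=0}^7 e_i\,(c_\alpha)_i = x_\alpha c_\alpha$, which assembles the expansion $x=\sum_\alpha x_\alpha\langle x_\alpha,x\rangle$, while $\sum_{i=0}^7 |(c_\alpha)_i|^2 = |c_\alpha|^2 = |\langle x_\alpha,x\rangle|^2$ yields the norm identity $\|x\|^2=\sum_\alpha|\langle x_\alpha,x\rangle|^2$. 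This completes the reduction, modulo the totality argument flagged above.
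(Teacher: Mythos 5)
First, a point of comparison: the paper does not actually prove this statement — Theorem \ref{thm:parseval} is imported from \cite[Thm.~5.7]{huoqinghai2021tensor} — so there is no in-paper proof to measure your argument against. On its own terms, your reduction to the real Hilbert space $(H,\langle\cdot,\cdot\rangle_{\mathbb{R}})$ is the natural strategy, and two of your three steps are sound: the computation showing that $\{x_\alpha e_i\}$ is real-orthonormal (using Lemma \ref{lem:inner_product}(3) and the pure imaginarity of second associators) is correct, and so is the final repackaging $\sum_i x_\alpha e_i\,(c_\alpha)_i=x_\alpha c_\alpha$ with $c_\alpha=\langle x_\alpha,x\rangle$.

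The genuine gap is at the totality step, exactly where you flagged it, and your proposed repair does not close it. To contradict maximality you must adjoin $z/\|z\|$ to $\{x_\alpha\}$ as a \emph{weak associative} orthonormal system, i.e.\ you need $B_p(x_\alpha,z)=0$ for all $p$; but octonionic orthogonality does not imply this. Concretely, in $H=\mathbb{O}^2$ with $\langle x,y\rangle=\overline{x_1}y_1+\overline{x_2}y_2$, the vectors $w=(e_1,e_2)$ and $z=(-e_7,e_4)$ satisfy $\langle w,z\rangle=e_1e_7-e_2e_4=e_6-e_6=0$, yet $B_{e_2}(w,z)=-\langle w,ze_2\rangle=e_1e_5+e_2e_6... $ wait, more precisely $B_{e_2}(w,z)=[e_1,e_7,e_2]=2e_4\neq0$. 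So a vector orthogonal to a weak associative orthonormal system need not be adjoinable, and maximality is not contradicted. Moreover, the extra leverage you invoke — that $\langle zp,x_\alpha\rangle_{\mathbb{R}}=0$ for all $p$ — is vacuous: by Lemma \ref{lem:inner_product}(2)--(3), $\langle zp,x_\alpha\rangle_{\mathbb{R}}=\langle z,x_\alpha\overline{p}\rangle_{\mathbb{R}}=\operatorname{Re}\bigl(\langle z,x_\alpha\rangle\,\overline{p}-B_{\overline{p}}(z,x_\alpha)\bigr)$, and since second associators are purely imaginary this vanishes automatically once $\langle z,x_\alpha\rangle=0$; it says nothing about the imaginary-valued quantity $B_p(z,x_\alpha)$ you actually need to kill. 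What is missing is an argument that from a nonzero $z$ with $\langle x_\alpha,z\rangle=0$ for all $\alpha$ one can manufacture a nonzero vector that is in addition weak-associatively orthogonal to every $x_\alpha$ — for instance by extracting, via the tensor decomposition of Theorem \ref{thm:tensor_decomp}, a suitable nonzero element of $\operatorname{Re}H$ in the orthogonal complement, for which $B_p$ vanishes by Lemma \ref{lem:inner_product}(4). Without that step the classical real Parseval theorem cannot be invoked and the expansion does not follow.
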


Moreover, eigenvectors corresponding to distinct eigenvalues are orthogonal and weakly associative, {in fact we have the following:}

\begin{thm}[Orthogonality of distinct eigenvectors]\label{thm:eigen-orthogonality}
Suppose {\(T \in \mathscr{B}_{\mathcal{R}\mathcal{O}}(H) \),} \(T = T^*\) and \( T z_k = z_k \lambda_k \) for \( z_k \in \mathbb C(H) \setminus \{0\} \), with distinct eigenvalues \( \lambda_1 \neq \lambda_2 \). Then:
\begin{enumerate}
    \item 	 {\([z_2,T,z_1] = 0\);}
    \item \(\langle z_1,z_2\rangle = 0\);
    \item \(\{z_1,z_2\}\) is weak associative orthonormal.
\end{enumerate}
\end{thm}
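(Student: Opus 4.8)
The plan is to first reduce everything to the real-linear theory and then exploit the slice structure. By Corollary~\ref{StrongEigen} the eigenvalues $\lambda_1,\lambda_2$ are real, so the strong eigenvalue equations become genuine eigenvector equations $Tz_k=\lambda_k z_k$ for the real self-adjoint operator $T$ on $(H,\langle\cdot,\cdot\rangle_{\mathbb{R}})$ (recall $T^{*}=T^{*_{\mathbb{R}}}$ by Remark~\ref{rem:T*=T*R}). I would prove the orthogonality statement (2) and the associator statement (1) simultaneously, by computing the triple associator $[z_2,T,z_1]$ in two different ways, and then treat the weak associativity in (3) separately through the slice decomposition.

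For (1) and (2), the first computation uses the defining relation \eqref{eq:adjoint_def} of the adjoint: since $T=T^{*}$ and the $\lambda_k$ are real,
\[
[z_2,T,z_1]=\langle z_1,T^{*}z_2\rangle-\langle Tz_1,z_2\rangle=(\lambda_2-\lambda_1)\langle z_1,z_2\rangle .
\]
The second computation expands the associator through its definition \eqref{eq:[yTx]}: writing $B_{e_i}(T,z_1)=T(z_1)e_i-T(z_1e_i)$ and using that $T$ is real self-adjoint, together with the coordinate identity $\langle z_1e_i,z_2\rangle_{\mathbb{R}}=\langle z_1,z_2\rangle_i$ for the $e_i$-coefficient $\langle z_1,z_2\rangle_i$ (immediate from Lemma~\ref{lem:inner_product}(3), since the correction term $B_{e_i}$ has vanishing real part), one obtains
\[
\langle z_2,B_{e_i}(T,z_1)\rangle_{\mathbb{R}}=(\lambda_1-\lambda_2)\langle z_1,z_2\rangle_i,\qquad\text{hence}\qquad [z_2,T,z_1]=(\lambda_1-\lambda_2)\operatorname{Im}\langle z_1,z_2\rangle .
\]
Because the triple associator is purely imaginary by \eqref{eq:[yTx]}, the first identity forces $(\lambda_2-\lambda_1)\operatorname{Re}\langle z_1,z_2\rangle=0$, whence $\operatorname{Re}\langle z_1,z_2\rangle=0$; equating the two expressions for $[z_2,T,z_1]$ and using $\lambda_1\neq\lambda_2$ gives $\operatorname{Im}\langle z_1,z_2\rangle=0$. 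Thus $\langle z_1,z_2\rangle=0$, which is (2), and substituting back yields $[z_2,T,z_1]=0$, which is (1).

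For (3), after normalizing the eigenvectors by a real scalar (which preserves the slice cone, the eigenvector property, and the bilinear form $B_p$), it remains to prove the weak associativity $B_p(z_1,z_2)=0$ for all $p\in\mathbb{O}$; note that the diagonal relations $B_p(z_k,z_k)=0$ hold automatically by \eqref{eq:Bp(uv)=-Bp(vu)}. Writing the slice paravectors as $z_k=u_k+v_kJ_k$ with $u_k,v_k\in\operatorname{Re}H$ and $J_k\in\mathbb{S}$, I would use Lemma~\ref{lem:inner_product}(3)--(4) (the correction terms drop out whenever a real-part argument is present) together with the tensor decomposition of Theorem~\ref{thm:tensor_decomp} to compute
\[
\langle z_1,z_2\rangle=\langle u_1,u_2\rangle+\langle u_1,v_2\rangle J_2-\langle v_1,u_2\rangle J_1-\langle v_1,v_2\rangle\,J_1J_2,
\]
\[
B_p(z_1,z_2)=-\langle v_1,v_2\rangle\,[J_1,J_2,p].
\]
A short case analysis then closes the argument. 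If $J_1,J_2$ are linearly dependent, then $[J_1,J_2,p]=0$ by the alternating property of the octonionic associator, so $B_p(z_1,z_2)=0$. If $J_1,J_2$ are linearly independent, then $\{J_1,J_2,\operatorname{Im}(J_1J_2)\}$ are linearly independent in $\operatorname{Im}\mathbb{O}$ (as $\operatorname{Im}(J_1J_2)=J_1\times J_2\neq 0$ is orthogonal to both), and the imaginary part of $\langle z_1,z_2\rangle=0$ forces the coefficient $\langle v_1,v_2\rangle$ of $\operatorname{Im}(J_1J_2)$ to vanish; hence again $B_p(z_1,z_2)=0$. This establishes (3).

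The main obstacle is the weak associativity in (3): parts (1) and (2) follow essentially from the real-linear self-adjoint theory once the eigenvalues are known to be real, whereas the vanishing of the full associator $B_p(z_1,z_2)$ genuinely requires the slice geometry. The two delicate points are the reduction of $B_p(z_1,z_2)$ to the single octonionic associator $-\langle v_1,v_2\rangle[J_1,J_2,p]$ — which hinges on the correction terms of Lemma~\ref{lem:inner_product} vanishing on real-part arguments — and the geometric observation that orthogonality in the slice cone decouples the two slices unless they coincide, which is precisely the mechanism neutralizing the non-associative term $[J_1,J_2,\cdot]$.
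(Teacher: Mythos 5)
Your argument is correct, and for parts (1)--(2) it takes a genuinely different route from the paper, while part (3) essentially coincides with the paper's proof. For (1)--(2) the paper also starts from the identity $(\lambda_1-\lambda_2)\langle z_1,z_2\rangle=-[z_2,T,z_1]$, but then expands $[z_2,T,z_1]=\sum_i e_i\operatorname{Re}\bigl(\langle Ty_1,y_2\rangle[e_i,J_2,J_1]\bigr)$ using the slice decomposition, normalizes $J_1=e_1$, $J_2'=e_2$, and shows the associator lands in the Euclidean orthogonal complement of the quaternionic subalgebra $\mathbb{H}_{e_1,e_2}$ while $(\lambda_1-\lambda_2)\langle z_1,z_2\rangle$ lies inside it, forcing both to vanish (with a separate case $J_1=\pm J_2$). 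Your second evaluation of the associator, $[z_2,T,z_1]=(\lambda_1-\lambda_2)\operatorname{Im}\langle z_1,z_2\rangle$, obtained purely from the real self-adjointness $\langle z_2,T(z_1e_i)\rangle_{\mathbb{R}}=\lambda_2\langle z_2,z_1e_i\rangle_{\mathbb{R}}$ and the coefficient identity $\langle z_2,z_1e_i\rangle_{\mathbb{R}}=\langle z_1,z_2\rangle_i$, lets you separate real and imaginary parts and conclude $\langle z_1,z_2\rangle=0$ directly; this is shorter, avoids the change of basis and the quaternionic-subalgebra argument, and in fact never uses that $z_1,z_2$ lie in the slice cone once the reality of the eigenvalues is granted, so it proves orthogonality for arbitrary eigenvectors with distinct real eigenvalues. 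For (3) your computation $B_p(z_1,z_2)=-\langle v_1,v_2\rangle[J_1,J_2,p]$ and the linear-independence argument extracting $\langle v_1,v_2\rangle=0$ from $\langle z_1,z_2\rangle=0$ are exactly the paper's steps (the paper phrases the independence via $\{1,J_1,J_2,J_1J_2\}$ rather than via $\operatorname{Im}(J_1J_2)$, which is equivalent); your explicit normalization remark is a small point of extra care that the paper leaves implicit.
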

\begin{proof}
					Write $z_k = x_k + J_k y_k$ for $x_k, y_k \in \operatorname{Re} H$ and $J_k \in \mathbb{S}$ ($k=1,2$). By Corollary \ref{StrongEigen}, {we know that} $\lambda_k \in \mathbb{R}$.
					
					\noindent{Let us prove point (1).} From  \eqref{eq:adjoint_def} and self-adjointness of $T$ we have
					\begin{equation}\label{eq:eigen-diff}
						(\lambda_1 - \lambda_2) \langle z_1, z_2 \rangle = \langle Tz_1, z_2 \rangle - \langle z_1, T z_2 \rangle = -{[ z_2, T,z_1].}
					\end{equation}
					By Lemma \ref{lem:vanishing_associator} and \eqref{eq:[yTx]} we deduce
						\begin{equation}\label{eq:associator-decomp}
						{[ z_2, T, z_1]} = \sum_{i=1}^7 e_i \langle z_2, B_{e_i}(T, J_1 y_1) \rangle_{\mathbb{R}} = \sum_{i=1}^7 e_i \operatorname{Re} \bigl( \langle T y_1, y_2 \rangle [e_i, J_2, J_1] \bigr).
					\end{equation}
					\begin{itemize}
						\item \textit{Case $J_1 = \pm J_2$}: The associator $[e_i, J_2, J_1]$ vanishes by antisymmetry, so {the right-hand side is $0$}.
						\item \textit{Case $J_1 \neq \pm J_2$}:
Define \(J_{2}' := J_{2} - c\,J_{1}\) with \(c = \langle J_{2}, J_{1} \rangle_{\mathbb{R}}\). Then \(J_{1} \perp_{\mathbb{R}} J_{2}'\),  {where \(\perp_{\mathbb{R}}\) denotes orthogonality in the Euclidean space \(\mathbb{O} \cong \mathbb{R}^{8}\). In particular,}
\[
[e_{i}, J_{2}, J_{1}] = [e_{i}, J_{2}', J_{1}] \qquad (i=0,\dots,7).
\]
{By a suitable scaling and change of basis, we may assume without loss of generality that
\[
J_{1}=e_{1}, \qquad J_{2}'=e_{2}.
\]
Let \(\mathbb{H}_{e_{1},e_{2}}\) be the quaternionic subalgebra generated by \(e_{1}\) and \(e_{2}\), and let \(\mathbb{H}_{e_{1},e_{2}}^{\perp_{\mathbb{R}}}\) denote its Euclidean orthogonal complement in \(\mathbb{O}\). A direct computation shows that, for any \(z_{1},z_{2}\in H\),
\[
[z_{2},T,z_{1}]
= \sum_{i=1}^{7} e_{i}\,\operatorname{Re}\!\big(\langle T y_{1}, y_{2}\rangle\,[e_{i},e_{2},e_{1}]\big)
\;\in\; \mathbb{H}_{e_{1},e_{2}}^{\perp_{\mathbb{R}}},
\]
where \([\cdot,\cdot,\cdot]\) is the  associator on $\O$. On the other hand,  {by direct calculations we deduce}
\[
(\lambda_{1}-\lambda_{2})\,\langle z_{1}, z_{2}\rangle \in \mathbb{H}_{e_{1},e_{2}},
\]
and hence \eqref{eq:eigen-diff} forces
\[
[z_{2},T,z_{1}] \in \mathbb{H}_{e_{1},e_{2}} \cap \mathbb{H}_{e_{1},e_{2}}^{\perp_{\mathbb{R}}}=\{0\}.
\]
Therefore \( [z_{2},T,z_{1}] = 0\).
 }
					\end{itemize}

\medskip

\noindent{Let us prove point (2).} Point (1) and \eqref{eq:eigen-diff} imply $\langle z_1, z_2 \rangle = 0$.

\medskip
					
					\noindent{Let us prove point (3).} To show $B_p(z_1, z_2) = 0$ for all $p \in \mathbb{O}$:
					\[
					B_p(z_1, z_2) = \langle J_1 y_1, J_2 y_2 \rangle p - \langle J_1 y_1, (J_2 y_2) p \rangle = [\overline{J_1}, J_2, p] \langle y_1, y_2 \rangle.
					\]
					\begin{itemize}
						\item \textit{Case $J_1 = \pm J_2$}: The associator vanishes identically.
						\item \textit{Case $J_1 \neq \pm J_2$}: Expand $\langle z_1, z_2 \rangle = 0$:
						\[
						\langle x_1, x_2 \rangle + \overline{J_1} \langle y_1, x_2 \rangle + \langle x_1, y_2 \rangle J_2 + \overline{J_1} J_2 \langle y_1, y_2 \rangle = 0.
						\]
						Since $\{1, J_1, J_2, J_1 J_2\}$ is linearly independent in the real vector space $\O$, it follows that $\langle y_1, y_2 \rangle = 0$. Thus $B_p(z_1, z_2) = 0$.
					\end{itemize}
					This completes the proof.
				\end{proof}

The framework of strong eigenpairs isolates the slice-compatible part of the spectrum and underpins a rigorous spectral decomposition. The following key facts will be used (note that items~(1)–(2) have already been established):
\begin{enumerate}
  \item Strong eigenvalues of self-adjoint para-linear operators are real.
  \item Strong eigenvectors associated with distinct eigenvalues form orthogonal, weakly associative systems.
  \item For \(z\in C(H)\), the slice projections \(P_{z}:x\mapsto z\langle z,x\rangle\) are para-linear, self-adjoint, and idempotent, enabling spectral diagonalization (see Section~\ref{sec:projections}).
\end{enumerate}

			\section{Projection Operators in Octonionic Hilbert Spaces}
\label{sec:projections}

This section develops the theory of projection operators associated with slice paravectors in octonionic Hilbert spaces. These operators are fundamental for the spectral decomposition of para-linear self-adjoint operators {that will be} established in   Section \ref{sec:spectral}.
\subsection{Slice Projection Operators}

Let \(H\) be a Hilbert \(\mathbb{O}\)-bimodule. For any \textit{slice paravector} \(z \in \mathbb{C}(H) \), we define the associated \textit{slice projection operator} \(P_z : H \to H\) by
\[
P_z(x) := z \langle z, x \rangle, \quad x \in H.
\]
The requirement \(z \in \mathbb{C}(H)\) is essential for the following {fundamental} properties.

\begin{thm}[Properties of Slice Projections]
\label{thm:projection-properties}
For any \(z \in \mathbb{C}(H)\) with \(\|z\| = 1\), the operator \(P_z\) satisfies:
\begin{enumerate}
    \item \textit{Para-linearity}: \(P_z \in \mathscr{B}_{\mathcal{RO}}(H)\),
    \item \textit{Self-adjointness}: \(P_z^* = P_z\),
    \item \textit{Idempotence}: \(P_z \circledcirc P_z = P_z\),
    \item \textit{Range and kernel}: \(\ker P_z = z^{\perp}\) and \(\operatorname{Ran} P_z =  z\mathbb{O}\).
\end{enumerate}
\end{thm}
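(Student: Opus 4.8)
The plan is to prove the four properties in a dependency-respecting order, extracting first two elementary identities and then reading off self-adjointness, idempotence, and the range/kernel description almost immediately; the only genuinely delicate point is para-linearity.

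I would first record two facts. Since $\mathbb{O}$ is a normed division algebra, the tensor identification $H\cong\operatorname{Re}H\otimes\mathbb{O}$ (Theorem~\ref{thm:tensor_decomp}) gives $\|zq\|=\|z\|\,|q|$ for $q\in\mathbb{O}$, whence $\|P_zx\|=|\langle z,x\rangle|\le\|x\|$ and $P_z$ is a contraction. More importantly, Lemma~\ref{lem:inner_product}(3) combined with $\|z\|=1$ and the identity $B_q(z,z)=0$ from \eqref{eq:Bp(uv)=-Bp(vu)} yields
\[
\langle z,zq\rangle=\langle z,z\rangle q-B_q(z,z)=q,\qquad q\in\mathbb{O}.
\]
This relation alone drives both idempotence and the range statement.

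For para-linearity I would compute $B_p(P_z,x)$ directly. With $q:=\langle z,x\rangle$,
\[
B_p(P_z,x)=(zq)p-z\langle z,xp\rangle=\big[(zq)p-z(qp)\big]+z\big(qp-\langle z,xp\rangle\big)=\big[(zq)p-z(qp)\big]+z\,B_p(z,x).
\]
The bracketed term is a right module associator; its real part vanishes because, under $H\cong\operatorname{Re}H\otimes\mathbb{O}$, it becomes $\sum_\alpha f_\alpha[z_\alpha,q,p]$ and $\operatorname{Re}[a,b,c]=0$ in $\mathbb{O}$. It then remains to show $\operatorname{Re}(z\,B_p(z,x))=0$. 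Here I would pass to coordinates via Theorem~\ref{thm:tensor_decomp}, where $P_z$ acts componentwise as $x_\alpha\mapsto z_\alpha\sum_\beta\overline{z_\beta}\,x_\beta$; every resulting summand is the real part of a four-fold octonion product $z_\alpha\overline{z_\beta}\,x_\beta\,p$, and repeated use of $\operatorname{Re}((ab)c)=\operatorname{Re}(a(bc))$ (the vanishing of $\operatorname{Re}[a,b,c]$) makes the two groupings coincide, so each summand cancels. This taming of the non-associativity hidden in $z\langle z,x\rangle$ is the main obstacle; once it is settled, $P_z\in\mathscr{B}_{\mathcal{RO}}(H)$.

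The remaining items are then short. For self-adjointness, Lemma~\ref{lem:inner_product}(2)--(3) give $\langle P_zx,y\rangle_{\mathbb{R}}=\operatorname{Re}\big(\langle x,z\rangle\langle z,y\rangle\big)$, which is symmetric in $x,y$ since $\operatorname{Re}(\overline{a}b)=\operatorname{Re}(\overline{b}a)$; thus $P_z$ coincides with its real adjoint and $P_z^{*}=P_z$ by Remark~\ref{rem:T*=T*R}. For idempotence, the key identity gives $P_z(P_zx)=z\langle z,z\langle z,x\rangle\rangle=z\langle z,x\rangle=P_zx$, so ordinary composition is already idempotent; since $P_z$ is para-linear and $\operatorname{Re}\big((P_z\circledcirc P_z)(x)\big)=\operatorname{Re}\big(P_z(P_zx)\big)=\operatorname{Re}(P_zx)$ by Definition~\ref{def:regular_composition}, the fact that a para-linear map is determined by its real part (Lemma~\ref{cor: A_p(x,f)=0 and f(px)=pf(x)}) forces $P_z\circledcirc P_z=P_z$. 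Finally, $zq=0\iff q=0$ gives $\ker P_z=\{x:\langle z,x\rangle=0\}=z^{\perp}$, while $\langle z,zq\rangle=q$ shows $x\mapsto\langle z,x\rangle$ maps onto $\mathbb{O}$, so $\operatorname{Ran}P_z=z\mathbb{O}$.
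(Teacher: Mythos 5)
Your arguments for parts (2)--(4) are correct and essentially the paper's own: the identities $\langle z,zq\rangle=q$ and $B_q(z,z)=0$ are exactly the tools used there, and your observation that $\langle P_zx,y\rangle_{\mathbb{R}}=\operatorname{Re}\bigl(\langle x,z\rangle\langle z,y\rangle\bigr)$ is symmetric reproduces the paper's self-adjointness argument. The gap is in part (1), at the step where you claim that $\operatorname{Re}\bigl(zB_p(z,x)\bigr)=0$ follows in coordinates from repeated use of $\operatorname{Re}((ab)c)=\operatorname{Re}(a(bc))$. In components the quantities to be compared are $\operatorname{Re}\bigl(\bigl(z_\gamma(\overline{z_\beta}x_\beta)\bigr)p\bigr)$ and $\operatorname{Re}\bigl(z_\gamma(\overline{z_\beta}(x_\beta p))\bigr)$; the trace identity moves only the \emph{outer} bracket, turning the first into $\operatorname{Re}\bigl(z_\gamma((\overline{z_\beta}x_\beta)p)\bigr)$, and the residual difference is $\operatorname{Re}\bigl(z_\gamma[\overline{z_\beta},x_\beta,p]\bigr)$. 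The real part of a four-fold octonionic product genuinely depends on the inner bracketing: the five parenthesizations fall into two trace-equivalence classes, and the two you need to identify lie in different classes. Concretely, $[e_2,e_4,e_7]=-2e_1$, so $\operatorname{Re}\bigl(e_1[e_2,e_4,e_7]\bigr)=2\neq 0$.

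The telltale sign is that your argument nowhere uses $z\in\mathbb{C}(H)$; were it valid, $P_z$ would be para-linear for every $z\in H$, contradicting Remark~\ref{rem:necessity-slice-condition} (whose counterexample $z=(e_1,e_2)\in\mathbb{O}^2$ is exactly of the above form, with $z_\gamma=e_1$ and $\overline{z_\beta}=-e_2$). The slice hypothesis must enter, and this is how the paper proceeds: writing $z=u+Jv$ with $u,v\in\operatorname{Re}H$, one has $B_p(z,x)=B_p(Jv,x)=-B_p(x,Jv)$; the term against $u$ dies because $\operatorname{Re}B_p(x,\cdot)=0$, and the surviving term $v\operatorname{Re}\bigl(JB_p(x,Jv)\bigr)$ vanishes by alternativity $[J,\cdot\,,J]=0$ (equivalently via \eqref{eq:B1}). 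In your coordinate language, the point is that all components of $z$ lie in a single associative slice $\mathbb{R}+J\mathbb{R}$, which is precisely what kills the cross terms $\operatorname{Re}\bigl(z_\gamma[\overline{z_\beta},x_\beta,p]\bigr)$. With this step supplied, the rest of your proposal stands.
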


\begin{proof}
Write \(z = u + Jv\) with \(u, v \in \operatorname{Re} H\) and \(J \in \mathbb{S}\).\\

\noindent\textit{(1) Para-linearity}: For \(p \in \mathbb{O}\) and \(x \in H\), consider the second associator
\[
B_p(P_z, x) = P_z(x)p - P_z(xp) = (z\langle z, x\rangle) p - z\langle z, xp\rangle.
\]
Using the real part operator and Lemma~	\ref{lem:inner_product}(3), {we deduce}
\[
\operatorname{Re} B_p(P_z, x) = \operatorname{Re}\left( [z, \langle z, x\rangle, p] + z B_p(z, x) \right)=\operatorname{Re}(z B_p(z, x)).
\]
For $z = u + Jv \in \mathbb{C}(H)$ with $u, v \in \operatorname{Re} H$, we compute from \eqref{eq:Bp(uv)=-Bp(vu)} and Lemma \ref{lem:inner_product}(4) that:
\begin{align*}
	B_p(z, x) &= B_p(u + Jv, x) \\
	&= B_p(Jv, x) \\
	&= -B_p(x, Jv). 
\end{align*}
Then:
\begin{align*}
	\operatorname{Re}(z B_p(z, x)) &= -\operatorname{Re}((u + Jv) B_p(x, Jv)) \\
	&= -\operatorname{Re}(u B_p(x, Jv)) - \operatorname{Re}(Jv B_p(x, Jv)) \\
	&=-v\operatorname{Re}(J B_p(x, Jv)).
\end{align*}
Now, using identity \eqref{eq:B1} we get:
$$
	-v\operatorname{Re}(JB_p(x, Jv)) 	= -v \operatorname{Re}(J B_p(\langle x, \cdot \rangle, vJ)) = 0
$$
This completes the proof that $\operatorname{Re} B_p(P_z, x) = 0$.
Thus \(P_z\) is para-linear.\\

\noindent\textit{(2) Self-adjointness}: By Theorem~\ref{thm:T* is paralinear}, \(P_z^*\) is para-linear. For \(x, y \in H\),
{{it follows from Lemma~ 	\ref{lem:inner_product} that} \begin{align*}
		\langle P_z(x), y \rangle_{\mathbb{R}} = \operatorname{Re} \left( \overline{\langle z, x \rangle} \langle z, y \rangle \right) = \operatorname{Re} \left( \overline{\langle z, y \rangle } {\langle z, x \rangle}\right) = \langle  P_z(y),x \rangle_{\mathbb{R}}.
\end{align*}}
  {In view of Remark \ref{rem:T*=T*R}, we have  \(P_z^* = P_z\).}\\

\noindent\textit{(3) Idempotence}: By \eqref{eq:Bp(uv)=-Bp(vu)} and \(\|z\| = 1\):
\[
P_z \circledcirc P_z (x) = z \langle z, z \langle z, x \rangle \rangle = z \left( \langle z, z \rangle \langle z, x \rangle \right) = z \langle z, x \rangle = P_z(x).
\]

\noindent\textit{(4) Range and kernel}: Immediate from the definition and \(\langle z, z \rangle = 1\).
\end{proof}

\begin{rem}
\label{rem:necessity-slice-condition}
The slice condition \(z \in \mathbb{C}(H)\) is necessary for para-linearity. For counterexamples such as \(z = (e_1, e_2) \in \mathbb{O}^2 \setminus \mathbb{C}(\mathbb{O}^2)\), the map \(x \mapsto z \langle z, x \rangle\) fails to be para-linear.
\end{rem}

\subsection{Interplay with Spectral Theory}

Projection operators commute with para-linear operators when restricted to eigenspaces, {as proved in the next result}.

\begin{thm}[Eigenprojection Commutation]
\label{thm:projection-commutation}
Let \(T \in \mathscr{B}_{\mathcal{RO}}(H)\) be para-linear and self-adjoint, and \(z \in \mathbb{C}(H)\) a strong eigenvector with real eigenvalue \(\lambda\) (Corollary~\ref{StrongEigen}). Then
\[
T \circledcirc  P_z = P_z \odot \lambda.
\]
\end{thm}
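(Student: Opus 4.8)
The plan is to exploit that, by Corollary~\ref{StrongEigen}, the eigenvalue $\lambda$ is real, which collapses both sides of the claimed identity to tractable forms, and then to compare the two para-linear operators through their real parts, invoking the uniqueness principle for para-linear maps (Theorem~\ref{thm:para_linear_char} and Lemma~\ref{cor: A_p(x,f)=0 and f(px)=pf(x)}).

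First I would simplify the right-hand side. Since $\lambda \in \mathbb{R}$ and $P_z$ is real-linear, the correction $B_\lambda(P_z, x) = P_z(x)\lambda - P_z(x\lambda)$ vanishes, because a real scalar is central in the bimodule and commutes with every product; hence by the definition of $\odot$ in \eqref{eq:f p} one gets $(P_z \odot \lambda)(x) = P_z(\lambda x) = \lambda P_z(x) = z\lambda\langle z, x\rangle$. In particular $P_z \odot \lambda$ is para-linear (it is a real multiple of the para-linear operator $P_z$, or directly by Theorem~\ref{thm:para_bimodule}), while $T \circledcirc P_z$ is para-linear by the very definition of the regular composition (Definition~\ref{def:regular_composition}). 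By Theorem~\ref{thm:para_linear_char}, a para-linear map is completely determined by its real-valued component $x \mapsto \operatorname{Re}(f(x))$; equivalently, two para-linear maps with the same real part coincide. Hence it suffices to prove $\operatorname{Re}\big((T\circledcirc P_z)(x)\big) = \operatorname{Re}\big((P_z \odot \lambda)(x)\big)$ for every $x \in H$.

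Next I would compute both real parts. Writing $q := \langle z, x\rangle \in \mathbb{O}$, so that $P_z(x) = zq$, the defining property \eqref{eq:regular_comp} of the regular composition gives $\operatorname{Re}\big((T\circledcirc P_z)(x)\big) = \operatorname{Re}\big(T(P_z(x))\big) = \operatorname{Re}\big(T(zq)\big)$. Since $T$ is para-linear, $\operatorname{Re} B_q(T, z) = \operatorname{Re}\big(T(z)q - T(zq)\big) = 0$, so $\operatorname{Re}(T(zq)) = \operatorname{Re}(T(z)\,q) = \operatorname{Re}(z\lambda q)$, using $Tz = z\lambda$. On the other side, from the first paragraph $\operatorname{Re}\big((P_z\odot\lambda)(x)\big) = \operatorname{Re}(\lambda z q) = \operatorname{Re}(z\lambda q)$, the last equality holding because $\lambda$ is real and thus commutes and associates freely. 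The two real parts agree, and uniqueness of para-linear maps yields $T \circledcirc P_z = P_z \odot \lambda$.

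The computation is short, so the only delicate point is bookkeeping rather than a genuine obstacle: one must note that $P_z(x) = zq$ is a \emph{right} multiple of $z$ by the octonion $q$, so that passing from $\operatorname{Re}(T(zq))$ to $\operatorname{Re}((Tz)q)$ genuinely invokes the para-linearity of $T$ (the vanishing of $\operatorname{Re} B_q(T,z)$) and not $\mathbb{O}$-linearity, which $T$ need not possess. Equally, I would stress that the reality of $\lambda$ is used twice—once to discard the correction term $B_\lambda(P_z,\cdot)$ on the right-hand side, and once to identify $\operatorname{Re}(\lambda z q)$ with $\operatorname{Re}(z\lambda q)$—since for a non-real scalar neither simplification survives and the identity would fail.
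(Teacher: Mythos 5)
Your argument is correct, and it reaches the conclusion by a genuinely different (and arguably leaner) route than the paper. The paper fixes $x \in \operatorname{Re} H$, uses the vanishing of the composition associator on $\operatorname{Re} H$ to write $(T \circledcirc P_z)(x) = T(z\langle z,x\rangle) = (Tz)\langle z,x\rangle - B_{\langle z,x\rangle}(T,z)$, and then shows that the correction term $B_{\langle z,x\rangle}(T,z)$ vanishes \emph{identically} by an explicit computation with Theorem~\ref{thm:para_linear_char}(3): writing $z = u + Jv$, the relevant associator reduces to a multiple of $[J,\overline{J},e_i] = 0$. Only then does it extend from $\operatorname{Re} H$ to all of $H$ by Lemma~\ref{cor: A_p(x,f)=0 and f(px)=pf(x)}(2). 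You instead compare the two sides through their real parts on \emph{all} of $H$: by \eqref{eq:regular_comp} the real part of $T \circledcirc P_z$ is $\operatorname{Re}(T(zq))$ with $q = \langle z,x\rangle$, and passing to $\operatorname{Re}((Tz)q)$ costs only $\operatorname{Re} B_q(T,z)=0$, which is the \emph{definition} of para-linearity rather than a computed identity. This bypasses the slice-structure computation entirely and never needs the full vanishing of $B_{\langle z,x\rangle}(T,z)$; the trade-off is that the paper's route yields the slightly stronger intermediate fact that $(T\circledcirc P_z)(x) = \lambda z\langle z,x\rangle$ holds exactly (not just in real part) for $x \in \operatorname{Re} H$. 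Both proofs close with the same uniqueness principle for para-linear maps, and your auxiliary observations (that $P_z \odot \lambda = \lambda P_z$ for real $\lambda$, and that reality of $\lambda$ is used twice) are accurate; the only implicit assumption is that real scalars act centrally on the bimodule, which the paper also uses throughout via the decomposition $H \cong \operatorname{Re} H \otimes_{\mathbb{R}} \mathbb{O}$.
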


\begin{proof}
For \(x \in \operatorname{Re} H\):
\[
(T \circledcirc  P_z)(x) = T(P_z(x)) = T(z \langle z, x \rangle).
\]
 Noting that $\lambda\in \R$ and \(Tz = z\lambda\), it follows  that
 $$T(z \langle z, x \rangle) = (Tz) \langle z, x \rangle - B_{\langle z, x \rangle}(T, z) = \lambda z \langle z, x \rangle - B_{\langle z, x \rangle}(T, z).$$
Since   \(z = u + Jv\) with \(u, v \in \operatorname{Re} H\), by part (3) of Theorem \ref{thm:para_linear_char} and part (1) of Theorem \ref{thm:tensor_decomp}, we have
\begin{eqnarray*}
	B_{\fx{z}{x}}(T,z)	&=&\sum_{i=1}^7
	e_i \re T([z,\fx{z}{x},e_i])\\
	&=&\sum_{i=1}^7
	e_i \re T([a+Jb,\fx{a+Jb}{x},e_i])\\
	&=&\sum_{i=1}^7\fx{b}{x}
	e_i \re T(b[J,\overline{J},e_i])\\
	&=&0.
\end{eqnarray*}
 The result follows by para-linearity and agreement on \(\operatorname{Re} H\).
\end{proof}

The commutation relation {proved above} is pivotal for the spectral decomposition in Section~\ref{sec:spectral}, where self-adjoint operators are diagonalized via slice projections.

 \section{Spectral Decomposition of Compact Self-Adjoint Operators}
\label{sec:spectral}

This section establishes the spectral decomposition for compact self-adjoint para-linear operators on octonionic Hilbert spaces, resolving {another} fundamental challenge in non-associative functional analysis. The absence of associativity necessitates a geometrically constrained spectral theory, culminating in a Hilbert-Schmidt-type theorem that generalizes classical results, while respecting the octonionic structure.

\subsection{Strong Eigenvalues and Standard Spectral Data}

Let $H$ be a Hilbert $\mathbb{O}$-bimodule and $T \in \mathscr{B}_{\mathcal{RO}}(H)$ be a \textit{compact self-adjoint para-linear operator}.
For simplicity, we confine ourselves to the point spectrum case. The spectrum $\sigma(T)$ consists of $\lambda \in \mathbb{O}$ where the eigenspace $$E_{\lambda}(T) := \{x \in H : Tx = x\lambda\}$$ is non-trivial. To ensure spectral significance, we restrict to  strong eigenvalues compatible with slice geometry.

By Corollary \ref{StrongEigen},  strong eigenvalues are real. For {the} spectral decomposition, we require:

\begin{mydef}[Standard Strong Eigenvalue]\label{def:ssEngen}
	A strong  eigenvalue $\lambda \in \sigma(T)$ is called  a \textbf{standard strong eigenvalue} if there exists an orthonormal {system} $\{z_i\}_{i \in \Lambda} \subset \mathbb{C}(H)$ of strong eigenvectors such that
	\begin{enumerate}
		\item $Tz_i = z_i\lambda$ for all $i \in \Lambda$;
		\item $ E_\lambda(T) \subset {\operatorname{span}}_{\mathbb O}\{z_i\}_{i\in\Lambda}:= \left\{ \sum\limits_{k=1}^N z_{i_k} p_k : N \in \mathbb{N},\  i_k \in \Lambda,\  p_k \in \mathbb{O} \right\}$.
	\end{enumerate}
\end{mydef}

This ensures {that} eigenspaces are generated by slice-orthogonal systems in $\mathbb{C}(H)$.

\subsection{Main Spectral Theorem}

The following represents {a fundamental result in} octonionic spectral theory:

\begin{thm}[Hilbert-Schmidt Theorem]
\label{thm:hilbert-schmidt}
Let $T \in \mathscr{B}_{\mathcal{RO}}(H)$ be compact, self-adjoint, and para-linear with all eigenvalues standard strong. Then there exists  a weak associative orthonormal basis $\{z_i\}_{i \in \mathcal{N}} \cup \{z_{\alpha}\}_{\alpha \in \Lambda}$ such that:
\begin{enumerate}
    \item $Tz_i = \lambda_i z_i$ with $\lambda_i \in \mathbb{R} \setminus \{0\}$ for $i \in \mathcal{N}$, where $\mathcal{N}$ is finite or countable. If $\mathcal{N}$ infinite, $\lim_{i\to\infty} \lambda_i = 0$;
    \item $Tz_{\alpha} = 0$ for $\alpha \in \Lambda$;
    \item For any $  x \in H$,
    \[
    Tx = \sum_{i \in \mathcal{N}} \lambda_i P_{z_i}x
    \]
    with norm convergence, where $P_{z_i}(x) = z_i \langle z_i, x \rangle$ is the para-linear projection.
\end{enumerate}
\end{thm}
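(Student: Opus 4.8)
The plan is to reduce the statement to the classical real Hilbert--Schmidt theorem and then reorganize the resulting real spectral data into the slice-compatible form demanded by the octonionic structure. By Remark~\ref{rem:T*=T*R} the octonionic adjoint coincides with the real adjoint on the real Hilbert space $(H,\langle\cdot,\cdot\rangle_{\mathbb{R}})$, so $T=T^{*}$ makes $T$ self-adjoint there, while compactness is a purely metric property independent of the module structure. I would first apply the classical spectral theorem for compact self-adjoint operators on a real Hilbert space to obtain a complete orthonormal system of real eigenvectors with real eigenvalues, the nonzero ones $\{\lambda_i\}$ having finite multiplicity, accumulating only at $0$, and yielding $H=\ker T\oplus^{\perp_{\mathbb{R}}}\overline{\bigoplus_{\lambda}E_{\lambda}(T)}$. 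Since each strong eigenvalue is real (Corollary~\ref{StrongEigen}) and real scalars are central, $E_{\lambda}(T)=\{x:Tx=x\lambda\}$ is the genuine real eigenspace; this already gives the reality and accumulation claims in (1)--(2) and the completeness used below.

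For each eigenvalue $\lambda$ (including $\lambda=0$ when $\ker T\neq\{0\}$), Definition~\ref{def:ssEngen} supplies an orthonormal family $\{z_j\}\subset\mathbb{C}(H)$ of strong eigenvectors whose $\mathbb{O}$-span contains $E_{\lambda}(T)$. Octonionic orthonormality forces $\langle z_j,z_k\rangle_{\mathbb{R}}=\delta_{jk}$, and since $\operatorname{span}_{\mathbb{O}}\{z_j\}\supseteq E_\lambda(T)$ for all $\lambda$, the closed $\mathbb{O}$-span of the combined family is all of $H$. Across distinct eigenvalues, Theorem~\ref{thm:eigen-orthogonality} already supplies $\langle z_i,z_j\rangle=0$ and $B_p(z_i,z_j)=0$. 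Within a single eigenspace I would upgrade the given orthonormal family to a weak associative orthonormal one; writing $z_k=u_k+J_kv_k$, the inner-product identities of Lemma~\ref{lem:inner_product} give $B_p(z_1,z_2)=[\overline{J_1},J_2,p]\langle v_1,v_2\rangle$, so $B_p(z_1,z_2)=0$ reduces to $\langle v_1,v_2\rangle=0$. The union $\{z_i\}_{i\in\mathcal N}\cup\{z_\alpha\}_{\alpha\in\Lambda}$, with $\mathcal N$ indexing the nonzero eigenvalues and $\Lambda$ the kernel, is then a weak associative orthonormal basis satisfying (1) and (2).

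For (3) I would verify the formula on $\operatorname{Re} H$ and extend by para-linearity. The decisive observation is that for $x\in\operatorname{Re} H$ and $z_n=u_n+J_nv_n$ one has $\langle z_n,x\rangle\in\mathbb{C}_{J_n}$, because $\langle u_n,x\rangle,\langle v_n,x\rangle\in\mathbb{R}$ by Theorem~\ref{thm:tensor_decomp} and Lemma~\ref{lem:inner_product}. Since $[J_n,q,e_k]=0$ for $q\in\mathbb{C}_{J_n}$, the correction $B_{\langle z_n,x\rangle}(T,z_n)$ vanishes (this is precisely the vanishing exploited in Theorem~\ref{thm:projection-commutation}), so $T(P_{z_n}x)=(Tz_n)\langle z_n,x\rangle=\lambda_n P_{z_n}x$ for the eigenvectors and $T(P_{z_\alpha}x)=0$ for the kernel vectors. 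Writing $x=\sum_n P_{z_n}x$ by Parseval's identity (Theorem~\ref{thm:parseval}) and using boundedness of $T$ then gives $Tx=\sum_{i\in\mathcal N}\lambda_i P_{z_i}x$ on $\operatorname{Re} H$. Norm convergence of the series follows from $\lambda_i\to0$ together with Bessel's inequality for the weak associative orthonormal system and the submultiplicative bound of Theorem~\ref{thm:norm_estimate}; the limit is para-linear by Theorem~\ref{thm:B(H)complete}, so by the uniqueness Lemma~\ref{cor: A_p(x,f)=0 and f(px)=pf(x)} the identity propagates from $\operatorname{Re} H$ to all of $H$.

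The main obstacle is the within-eigenspace step. Because $T$ does not preserve the slice cone, strong eigenvectors sharing a nonzero eigenvalue may lie in different slices, and one cannot freely form octonionic linear combinations: a combination $z_1+z_2c$ remains a strong eigenvector only when $c$ lies in the relevant slice. Thus orthogonalizing the imaginary parts $v_j$ while preserving the strong-eigenvector property demands a careful slice-by-slice analysis rather than a naive Gram--Schmidt. Reconciling the real-orthogonal decomposition, in which each $E_\lambda$ is a true eigenspace, with the octonionic $\mathbb{O}$-span, in which $z_n\mathbb{O}$ overshoots $E_\lambda$, is the conceptual crux; the slice-coherence $\langle z_n,x\rangle\in\mathbb{C}_{J_n}$ for $x\in\operatorname{Re} H$ is exactly what forces the correction terms to vanish and makes the clean projection formula emerge.
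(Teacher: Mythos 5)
Your overall strategy coincides with the paper's: reduce to the classical real spectral theorem via Remark~\ref{rem:T*=T*R}, use Definition~\ref{def:ssEngen} to replace each real eigenspace by an orthonormal system of slice eigenvectors, check weak associative orthonormality, and then prove the projection formula on $\operatorname{Re} H$ (via the vanishing of $B_{\langle z_i,x\rangle}(T,z_i)$, i.e.\ Theorem~\ref{thm:projection-commutation}) before extending by Parseval and the uniqueness Lemma~\ref{cor: A_p(x,f)=0 and f(px)=pf(x)}. Those parts are correct and match the paper essentially step for step.

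The one place you do not close the argument is exactly the step you flag as ``the main obstacle'': weak associativity \emph{within} a single eigenspace. You correctly reduce $B_p(z_1,z_2)=0$ to the identity $B_p(z_1,z_2)=[\overline{J_1},J_2,p]\,\langle v_1,v_2\rangle$, but you then propose to ``upgrade'' the family by some slice-aware orthogonalization of the imaginary parts $v_j$, and you leave this construction open. No such upgrading is needed, and attempting one would indeed run into the difficulty you describe (octonionic combinations of strong eigenvectors in different slices need not be strong eigenvectors). The point is that the orthonormality already required by Definition~\ref{def:ssEngen} forces the conclusion by the same two-case analysis used in the proof of Theorem~\ref{thm:eigen-orthogonality}(3): if $J_1=\pm J_2$ the associator $[\overline{J_1},J_2,p]$ vanishes identically by alternativity, so $B_p(z_1,z_2)=0$ regardless of $\langle v_1,v_2\rangle$; if $J_1\neq\pm J_2$, expand
\[
0=\langle z_1,z_2\rangle=\langle u_1,u_2\rangle+\overline{J_1}\langle v_1,u_2\rangle+\langle u_1,v_2\rangle J_2+\overline{J_1}J_2\langle v_1,v_2\rangle ,
\]
where all four inner products are real; since $\{1,J_1,J_2,J_1J_2\}$ is linearly independent over $\mathbb{R}$, each coefficient vanishes, in particular $\langle v_1,v_2\rangle=0$, hence $B_p(z_1,z_2)=0$. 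This is a two-line verification, not a construction, so the ``conceptual crux'' you identify dissolves once you use the hypothesis as given. A minor further point: your appeal to the submultiplicative bound of Theorem~\ref{thm:norm_estimate} for the norm convergence of $\sum_i\lambda_iP_{z_i}x$ is not what is needed; the paper obtains convergence directly from Parseval's identity, $\bigl\|\sum_i\lambda_iz_i\langle z_i,x\rangle\bigr\|^2=\sum_i|\lambda_i|^2|\langle z_i,x\rangle|^2$, together with $\lambda_i\to0$.
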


\begin{proof}
    Since each eigenvalue of $T$ is a standard strong eigenvalue and $T$ is an $\mathbb{R}$-linear compact operator, its real spectrum $\sigma_{\mathbb{R}}(T)$ coincides with $\sigma(T)$. By classical theory for real linear compact operators, $\sigma(T)$ is at most countable with $0$ as the only possible accumulation point. Thus, $\mathcal{N}$ is at most countable and $\lim_{i\to\infty} \lambda_i = 0$.

    By Definition \ref{def:ssEngen}, for each $\lambda \in \sigma(T)$, there exists an orthonormal {system} $\{z_i\}_{i\in\Pi}$ of strong eigenvectors such that
    \[
        E_{\lambda}(T) \subset   {\operatorname{span}}_{\mathbb O}\{z_i\}_{i\in\Lambda}.
    \]
    For any $z_i, z_j$ with $i \neq j \in \Lambda$, if there exists $J \in \mathbb{S}$ such that $z_i, z_j \in \mathbb{C}_J(H)$, then
    \[
        B_p(z_i, z_j) = [p, \bar{J}, J] \fx{y_i}{y_j} = 0,
    \]
    where $y_i = \re(z_i\bar{J})$ and $y_j$ is defined similarly.

    Now suppose $z_i \in \mathbb{C}_I(H)$, $z_j \in \mathbb{C}_J(H)$ with $I \neq \pm J$. From $\fx{z_i}{z_j} = 0$ and Theorem \ref{thm:eigen-orthogonality}(3), we obtain $B_p(z_i, z_j) = 0$. By Theorem \ref{thm:eigen-orthogonality}, the set $\{z_i\}_{i\in\mathcal{N}} \cup \{z_\alpha\}_{\alpha\in\Lambda}$ forms a weak associative orthonormal set.

    Viewing $T$ as a real linear compact operator, we have
    \[
        H = \bigoplus_{i\in\mathcal{N}} E_{\lambda_i}(T) \oplus \ker T \subset  {\operatorname{span}}_{\O}\left( \{z_i\}_{i\in\mathcal{N}} \cup \{z_\alpha\}_{\alpha\in\Lambda} \right) \subset  H.
    \]
    Thus, $\{z_i\}_{i\in\mathcal{N}} \cup \{z_\alpha\}_{\alpha\in\Lambda}$ is a weak associative orthonormal basis for $H$.

    Next, we show that $Tx = \sum_{i\in\mathcal{N}} \lambda_i P_{z_i}(x)$ for all $x \in H$. Since $\lim_{i\to\infty} \lambda_i = 0$, for any $\epsilon > 0$, there exists $N \in \mathbb{N}$ such that $|\lambda_i| \leqslant  \epsilon$ for $i > N$. By the Parseval Theorem (Theorem \ref{thm:parseval}),
    \begin{align*}
        \left\| \sum_{i\in\mathcal{N}} \lambda_i P_{z_i}(x) \right\|^2
        &= \left\| \sum_{i\in\mathcal{N}} \lambda_i z_i \fx{z_i}{x} \right\|^2 \\
        &= \sum_{i=1}^N |\lambda_i \fx{z_i}{x}|^2 + \sum_{i=N+1}^\infty |\lambda_i \fx{z_i}{x}|^2 \\
        &\leqslant \sum_{i=1}^N |\lambda_i \fx{z_i}{x}|^2 + \epsilon^2 \|x\|^2.
    \end{align*}
    This proves norm convergence, so the operator $Px = \sum_{i\in\mathcal{N}} \lambda_i P_{z_i}(x)$ is para-linear. To show $T = P$, by Lemma \ref{cor: A_p(x,f)=0 and f(px)=pf(x)}(2), it suffices to verify $Tx = Px$ for $x \in \operatorname{Re} H$. The Parseval Theorem gives
    \[
        x = \sum_{i\in\mathcal{N}} P_{z_i}(x) + \sum_{\alpha\in\Lambda} P_{z_\alpha}(x).
    \]
    Since $x \in \operatorname{Re} H$ and $T$ is para-linear and continuous, Lemma \ref{thm:projection-commutation} yields
    \begin{align*}
        Tx &= T \left( \sum_{i\in\mathcal{N}} P_{z_i}(x) + \sum_{\alpha\in\Lambda} P_{z_\alpha}(x) \right) \\
           &= \sum_{i\in\mathcal{N}} (T \circledcirc P_{z_i})(x) + \sum_{\alpha\in\Lambda} (T \circledcirc P_{z_\alpha})(x) \\
           &= \sum_{i\in\mathcal{N}} (P_{z_i} \odot \lambda_i)(x) \\
           &= \sum_{i\in\mathcal{N}} \lambda_i P_{z_i}(x).
    \end{align*}
    The last equality holds because $T \circledcirc P_{z_\alpha} = 0$ for $z_\alpha \in \ker T$. This completes the proof.
\end{proof}
\begin{rem}
	If $T$ can be decomposed into a sum of projections
	$$T=\sum_{i}\lambda_iP_{z_i}$$ with $\fx{z_i}{z_j}=\delta_{ij}$, then by the requirement of para-linearity, we must have $z_i\in \spc(H)$ and   $$Tz_i=\lambda_iz_i.$$
	This implies each $\lambda_i$ is a standard eigenvalue of $T$. That is, to decompose a para-linear operator $T$ into a sum of projections, it is natural to require the eigenvalues to be  standard strong.
	
\end{rem}

\subsection{Finite Rank Operators}

The finite rank case admits a particularly elegant decomposition.
\begin{mydef}[Finite Rank Operator]
$T \in \mathscr{B}_{\mathcal{RO}}(H)$ has \textbf{rank} $n$ if $\operatorname{Ran} T$ is an $\O$-submodule and  $$\dim_{\mathbb{O}}(\operatorname{Ran} T)  := \dim_{\mathbb{R}} \operatorname{Re}(\operatorname{Ran} T)= n.$$
\end{mydef}

\begin{thm}[Finite Rank Spectral Decomposition]
\label{thm:finite-rank}
Let {the  self-adjoint, para-linear} $T$ have rank $n$, with $n$ pairwise orthogonal strong eigenvectors $\{z_k\}_{k=1}^n \subset \mathbb{C}(H)$ and eigenvalues $\lambda_k \in \mathbb{R}$. Then
\[
T = \sum_{k=1}^n \lambda_k P_{z_k}.
\]
\end{thm}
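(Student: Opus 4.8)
The plan is to set $P:=\sum_{k=1}^{n}\lambda_k P_{z_k}$ and to prove $T=P$ through the para-linear uniqueness principle: since both $T$ and $P$ are para-linear, Lemma~\ref{cor: A_p(x,f)=0 and f(px)=pf(x)}(2) reduces the identity $T=P$ to the equality $Tx=Px$ for $x\in\operatorname{Re}H$ only. This is the crucial simplification, because it avoids having to control $T$ on the non-real vectors $z_k e_i$, where $T$ fails to be $\mathbb{O}$-linear. After normalizing so that $\|z_k\|=1$, I may also assume $\lambda_k\neq 0$ for every $k$: an eigenvector with zero eigenvalue contributes nothing to $P$ and lies in $\ker T$, so it plays no role below.

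First I would record the structural facts. From the pairwise orthogonality $\langle z_i,z_j\rangle=\delta_{ij}$, the computation carried out in the proof of Theorem~\ref{thm:eigen-orthogonality}(3) — which uses only orthogonality and not the distinctness of the eigenvalues — shows that $\{z_k\}_{k=1}^{n}$ is weak associative orthonormal, i.e. $B_p(z_i,z_j)=0$ for all $p\in\mathbb{O}$. A direct consequence, via Lemma~\ref{lem:inner_product}(3), is that every $w$ in the right span $V:=\operatorname{span}_{\mathbb{O}}\{z_k\}$ satisfies $w=\sum_k z_k\langle z_k,w\rangle$. Next I would identify $V$ with $\operatorname{Ran}T$: since $\lambda_k\in\mathbb{R}\setminus\{0\}$ and $Tz_k=z_k\lambda_k$, each $z_k=(Tz_k)\lambda_k^{-1}$ lies in the $\mathbb{O}$-submodule $\operatorname{Ran}T$, so $V\subseteq\operatorname{Ran}T$; a dimension count then forces equality, because the $8n$ vectors $\{z_k e_i\}$ are $\langle\cdot,\cdot\rangle_{\mathbb{R}}$-orthonormal (giving $\dim_{\mathbb{R}}V=8n$) while $\dim_{\mathbb{R}}\operatorname{Ran}T=8n$ by Theorem~\ref{thm:tensor_decomp} and the rank hypothesis. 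Finally, $P$ is para-linear and self-adjoint, being a real-linear combination of the para-linear, self-adjoint projections $P_{z_k}$ (Theorem~\ref{thm:projection-properties}).

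The core step is the evaluation on $\operatorname{Re}H$. Fix $x\in\operatorname{Re}H$. Since $\operatorname{Ran}T=V$, the vector $Tx$ lies in $V$, hence $Tx=\sum_{l}z_l\langle z_l,Tx\rangle$. To compute the coefficients I would use the defining relation \eqref{eq:adjoint_def}, self-adjointness $T^{*}=T$, and $Tz_l=z_l\lambda_l$ with $\lambda_l\in\mathbb{R}$:
\[
\langle Tx,z_l\rangle=\langle x,Tz_l\rangle-[z_l,T,x]=\langle x,z_l\rangle\lambda_l-[z_l,T,x].
\]
Here the triple associator $[z_l,T,x]$ vanishes because $x\in\operatorname{Re}H$ (Lemma~\ref{lem:vanishing_associator}(2)), so $\langle Tx,z_l\rangle=\langle x,z_l\rangle\lambda_l$ and, on conjugating, $\langle z_l,Tx\rangle=\lambda_l\langle z_l,x\rangle$. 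Substituting back yields
\[
Tx=\sum_{l}z_l\,\lambda_l\langle z_l,x\rangle=\sum_{l}\lambda_l P_{z_l}(x)=Px.
\]
Thus $T$ and $P$ agree on $\operatorname{Re}H$, and the uniqueness principle gives $T=P$.

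The main obstacle is conceptual rather than computational: since $T$ need not be $\mathbb{O}$-linear, any naive attempt to verify $T=P$ on a real basis $\{z_k e_i\}$ of $V$ collides with the nonvanishing second associators $B_{e_i}(T,z_k)$, and the equality $T(z_k e_i)=\lambda_k z_k e_i$ is not available a priori. The resolution is exactly the two ingredients exploited above — the para-linear uniqueness principle, which confines all verifications to $\operatorname{Re}H$, and the vanishing of $[z_l,T,x]$ there — so that the non-associative corrections never have to be computed directly. A secondary point demanding care is the identification $\operatorname{Ran}T=V$ together with the weak associativity of $\{z_k\}$, since it is precisely this that guarantees $Tx\in V$ and thereby licenses the eigenexpansion of $Tx$.
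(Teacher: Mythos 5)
Your proof is correct, and it shares the paper's key structural step --- the dimension count identifying $\operatorname{span}_{\mathbb{O}}\{z_k\}$ with $\operatorname{Ran}T$ (the paper phrases it as a contradiction: a unit $w\in\operatorname{Ran}T$ orthogonal to all $z_k$ would give $1+8n$ real-orthonormal vectors in an $8n$-dimensional real space) --- but it diverges in how the decomposition is then extracted. The paper simply declares that ``the decomposition follows from Theorem~\ref{thm:hilbert-schmidt}'', i.e.\ it re-enters the full compact-operator machinery (Parseval, the projection commutation theorem, and implicitly the standard-strong-eigenvalue hypotheses of that theorem, which the finite-rank statement does not literally supply). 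You instead finish by hand: expand $Tx\in\operatorname{Ran}T=V$ in the weak associative orthonormal system, compute the coefficients $\langle z_l,Tx\rangle=\lambda_l\langle z_l,x\rangle$ from $T=T^*$ and the vanishing of $[z_l,T,x]$ for $x\in\operatorname{Re}H$, and invoke the uniqueness lemma. This is more self-contained, makes the role of self-adjointness explicit, and avoids having to check that every eigenvalue of $T$ is standard strong; your observation that the weak associativity argument in Theorem~\ref{thm:eigen-orthogonality}(3) uses only orthogonality (not distinctness of eigenvalues) is also correct and is something the paper glosses over by citing Theorem~\ref{thm:hilbert-schmidt}. One small caveat: your ``I may assume $\lambda_k\neq0$'' is not really a harmless reduction --- if some $\lambda_k$ were zero, the span of the remaining eigenvectors would have $\mathbb{O}$-dimension strictly less than $n=\dim_{\mathbb{O}}\operatorname{Ran}T$ and your identification $V=\operatorname{Ran}T$ (and indeed the theorem's conclusion itself) would fail. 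The correct reading is that the rank hypothesis forces all $\lambda_k\neq0$, an implicit assumption the paper's own dimension count also relies on; you should state it as a consequence of the hypotheses rather than as a discardable case.
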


\begin{proof}
By Theorem \ref{thm:hilbert-schmidt}, $\{z_k\}$ is weak associative orthonormal. Let $M = \operatorname{Ran} T$. If there exists $  w \in M$ with $\|w\|=1$ and $\langle w, z_k \rangle = 0$ for any $k$, then
\[
\mathcal{S} = \{ w \} \cup \{ z_k e_i : 1 \leqslant k \leqslant n,  0 \leqslant i \leqslant 7 \}
\]
is orthonormal in $(M, \langle \cdot, \cdot \rangle_{\mathbb{R}})$. But $|\mathcal{S}| = 1 + 8n > \dim_{\mathbb{R}} M = 8n$, contradiction. Thus $\{z_k\}$ $\mathbb{O}$-spans $M$, and decomposition follows from Theorem \ref{thm:hilbert-schmidt}.
\end{proof}

\begin{cor}
If $T$ {has rank $n$, is self-adjoint} and para-linear, and has $n$ distinct strong eigenvalues, then $T = \sum_{k=1}^n \lambda_k P_{z_k}$.
\end{cor}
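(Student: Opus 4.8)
The plan is to deduce this corollary directly from Theorem~\ref{thm:finite-rank}, the only real task being to convert the hypothesis of $n$ \emph{distinct} strong eigenvalues into the system of $n$ \emph{pairwise orthogonal} strong eigenvectors that the finite-rank theorem requires as input. First I would list the distinct strong eigenvalues as $\lambda_1,\dots,\lambda_n$ and, for each $k$, pick a strong eigenvector $z_k\in\mathbb{C}(H)$ with $Tz_k=z_k\lambda_k$ and $\|z_k\|=1$ (normalizing if necessary). By Corollary~\ref{StrongEigen} every $\lambda_k$ is real, so these are precisely the real eigenvalues appearing in the statement of Theorem~\ref{thm:finite-rank}.

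The key step is pairwise orthogonality. For each pair $i\neq j$ we have $\lambda_i\neq\lambda_j$, so Theorem~\ref{thm:eigen-orthogonality} applies to $(z_i,z_j)$ and gives simultaneously $\langle z_i,z_j\rangle=0$ and $B_p(z_i,z_j)=0$ for all $p\in\mathbb{O}$; that is, $\{z_i,z_j\}$ is weak associative orthonormal. Ranging over all pairs shows that $\{z_k\}_{k=1}^{n}$ is a pairwise orthogonal (indeed weak associative orthonormal) family of unit strong eigenvectors lying in the slice cone $\mathbb{C}(H)$.

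With this family in hand, every hypothesis of Theorem~\ref{thm:finite-rank} is met: $T$ is self-adjoint, para-linear, of rank $n$, and we have produced exactly $n$ pairwise orthogonal strong eigenvectors $z_k\in\mathbb{C}(H)$ with real eigenvalues $\lambda_k$. Invoking Theorem~\ref{thm:finite-rank} yields $T=\sum_{k=1}^{n}\lambda_k P_{z_k}$, as claimed.

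I expect no genuine obstacle, since the corollary is essentially an assembly of two results already established. The one point deserving a line of care is that the number of distinct eigenvalues equals the rank, so that the $n$ orthogonal eigenvectors exhaust the range $\operatorname{Ran}T$ (whose real dimension is $8n$) rather than spanning a proper submodule; this matching is exactly what makes the dimension-counting argument internal to Theorem~\ref{thm:finite-rank} go through, leaving no residual orthogonal direction in the range.
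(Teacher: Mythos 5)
Your proposal is correct and follows exactly the route the paper intends: the corollary is stated as an immediate consequence of Theorem~\ref{thm:finite-rank}, with the distinctness of the eigenvalues feeding into Theorem~\ref{thm:eigen-orthogonality} to supply the required pairwise orthogonal (indeed weak associative orthonormal) family of strong eigenvectors. Your closing remark about the dimension count $8n$ matching $\dim_{\mathbb{R}}\operatorname{Ran}T$ is precisely the argument already internal to the proof of Theorem~\ref{thm:finite-rank}, so nothing further is needed.
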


\subsection{Geometric Interpretation}

The spectral decomposition reveals how octonionic non-associativity shapes diagonalization:
\begin{itemize}
    \item \textbf{Slice-Constrained Eigenvectors}: Eigenvectors in $\mathbb{C}(H)$ ensure para-linearity of projections $P_z$ (Theorem \ref{thm:projection-properties}).
    \item \textbf{Weak Associativity}: Weak associative orthogonality $B_p(z_i, z_j) = 0$ enables convergent projection sums and
    \begin{equation} \label{eq:projection_orthogonality}
    	P_{z_j} P_{z_i} = \delta_{ij} P_{z_i} \quad \forall\ i,j.
    \end{equation}
    \item \textbf{Real Spectrum}: Strong eigenvalues $\lambda_i \in \mathbb{R}$ reflect self-adjointness (Corollary \ref{StrongEigen}).
\end{itemize}
This framework transforms non-associativity from an obstruction into a structured feature, enabling rigorous spectral analysis in octonionic Hilbert spaces.

\section{Functional Calculus for Compact Self-Adjoint Operators}
\label{sec:functional_calculus}

This section develops a functional calculus for compact, self-adjoint, para-linear operators with standard strong eigenvalues on octonionic Hilbert spaces.
{The non-associative nature of $\mathbb{O}$ requires a specific  approach, where the \emph{real part} of operators and functions plays a central role.} We establish left and right functional calculi $\Phi$ and $\Psi$, prove {some} core properties, and demonstrate independence from spectral decompositions.

\subsection{Preliminaries and Function Space}
\label{subsec:prelim}

Let $H$ be a Hilbert $\mathbb{O}$-bimodule and $T \in \mathscr{B}_{\mathcal{RO}}(H)$ a compact self-adjoint para-linear operator whose eigenvalues are \emph{standard strong eigenvalues} (Definition \ref{def:ssEngen}). By Theorem \ref{thm:hilbert-schmidt}, there is a weak associative orthonormal basis consisting of strong eigenvectors $\{z_i\}_{i \in \mathcal{N}} \cup \{z_{\alpha}\}_{\alpha \in \Lambda}$ such that
$$Tz_i=\lambda_iz_i, i\in \mathcal{N} \qquad Tz_\alpha=0, \alpha\in \Lambda,$$ and   $T$ admits a spectral decomposition:
\[
T = \sum_{i=1}^{\infty} \lambda_i P_{z_i}.
\]
The \emph{spectrum} $\sigma(T) = \{\lambda_i\} \cup \{0\}$ is a compact subset of $\mathbb{R}$.

Define $\mathcal{R}$ as the algebra of $\mathbb{O}$-valued  functions $f$ on  $\sigma(T)$ such that $$\sup_{q \in \sigma(T)} |f(q)|< +\infty.$$
 Equip $\mathcal{R}$ with:
\begin{itemize}
    \item \textbf{Octonionic multiplication}:
    \[
    (p  f)(q) := pf(q), \quad (f  p)(q) := f(q)p, \quad p \in \mathbb{O}.
    \]
    \item \textbf{Product}: For $g\in \mathcal{R}$,
    \[
    (f  g)(q) := f(q)g(q).
    \]
    \item \textbf{Involution}: $$f^*(q) := \overline{f(q)}.$$
    \item \textbf{Norm}: $$\|f\| := \sup_{q \in \sigma(T)} |f(q)|.$$
\end{itemize}
Then $\mathcal{R}$ is an $\mathbb{O}$-involutive Banach algebra. The \emph{real part} $\operatorname{Re} \mathcal{R}$ consists of real-valued functions.

\subsection{Right and Left Functional Calculi}
\label{subsec:def_calculi}

The non-associativity of $\mathbb{O}$ necessitates distinct left and right functional calculi, defined via the spectral decomposition. Note that   $\{z_i\}_{i \in \mathcal{N}} \cup \{z_{\alpha}\}_{\alpha \in \Lambda}$ is a weak associative orthonormal basis. This enables us to define   $$P_0:=\sum_{\alpha\in\Lambda}P_{z_\alpha}.$$

\begin{mydef}[Functional Calculi]
\label{def:functional_calculi}
The \emph{right functional calculus} $\Phi: \mathcal{R} \to \mathscr{B}_{\mathcal{RO}}(H)$ is
{defined by}
\[
\Phi(f)(x):= (P_0\odot f(0))(x)+\sum_{i=1}^{\infty} (P_{z_i} \odot f(\lambda_i))(x),
\]
where $\odot$ is the octonionic scalar multiplication for operators (see Theorem \ref{thm:para_bimodule}). The series converge uniformly in  norm by Parseval's Theorem (Theorem  \ref{thm:parseval}).
For simplicity, we can write $$\Phi(f) = \sum_{i\in \mathcal{N}\cup \Lambda } P_{z_i} \odot f(\lambda_i).$$

The \emph{left functional calculus} $\Psi: \mathcal{R} \to \mathscr{B}_{\mathcal{RO}}(H)$ is
{defined by}
\[
\Psi(f)(x) := (f(0)\odot P_0)(x)+\sum_{i=1}^{\infty} (f(\lambda_i) \odot P_{z_i})(x).
\]
\end{mydef}
\subsection{Key Properties}
\label{subsec:key_properties}

The functional calculi satisfy the following properties, generalizing the spectral theorem to octonionic spaces.

\begin{thm}[Properties of  the functional calculi]
\label{thm:functional_calculus_properties}
The map $\Phi$ satisfies:
\begin{enumerate}
    \item \textbf{Power Preservation}: For all $k \in \mathbb{N}$,
    \[
    \Phi(q^k) = T^{\circledcirc k},
    \]

   {where $T^{\circledcirc k}$ is the unique para-linear operator satisfying $T^{\circledcirc  k}(x) = T^k(x)$ for $x \in \operatorname{Re} H$.}
    \item \textbf{Right Para-Linearity}: For $f, g \in \mathcal{R}$, $p \in \mathbb{O}$,
    \[
    \Phi(f + g) = \Phi(f) + \Phi(g), \quad \operatorname{Re} \left( \Phi(f) \odot p - \Phi(f  p) \right) = 0.
    \]

    \item \textbf{{Real-part Multiplicative Property}}: For $f \in \operatorname{Re} \mathcal{R}$, $g \in \mathcal{R}$,
    \[
    \operatorname{Re} \Phi(f  g) = \operatorname{Re} \left( \Phi(f) \circledcirc \Phi(g) \right).
    \]
Note that here $   \operatorname{Re}$ represents the real part operator on $ \mathscr{B}_{\mathcal{RO}}(H)$.
    \item \textbf{Involution}: $$\Phi(f^*) = \Psi(f)^*.$$

    \item \textbf{Norm Bounds}:
  \begin{eqnarray*}
    &\|\Phi(f)\| \leqslant \|f\| \qquad  &\text{ for }\quad  f \in \operatorname{Re} \mathcal{R};
    \\
    & \|\Phi(f)\| \leqslant  8 \|f\|\qquad  &\text{ for }\quad  f \in \mathcal{R}.
  \end{eqnarray*}
\end{enumerate}
Analogous results hold for $\Psi$.
\end{thm}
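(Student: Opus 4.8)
The plan is to verify the five properties in turn, in each case reducing the statement to structural facts already established: the spectral decomposition $T=\sum_i\lambda_iP_{z_i}$ (Theorem~\ref{thm:hilbert-schmidt}), the Parseval identity (Theorem~\ref{thm:parseval}), the projection relations $P_{z_i}\circledcirc P_{z_j}=\delta_{ij}P_{z_i}$ (see \eqref{eq:projection_orthogonality}) and $P_{z_i}^{*}=P_{z_i}$ (Theorem~\ref{thm:projection-properties}), the eigenprojection commutation $T\circledcirc P_{z_i}=P_{z_i}\odot\lambda_i$ (Theorem~\ref{thm:projection-commutation}), and the vanishing of real parts of composition associators \eqref{eq:re [fgh]=0}. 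Throughout, the reality of the $\lambda_i$ is crucial, since it makes the right scalar action $P_{z_i}\odot\lambda_i$ coincide with ordinary real scaling $\lambda_iP_{z_i}$.

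For \emph{power preservation} (1), I would first note that since $\lambda_i\in\mathbb{R}$ one has $\Phi(q^k)=\sum_i\lambda_i^kP_{z_i}$, which is para-linear; by Lemma~\ref{cor: A_p(x,f)=0 and f(px)=pf(x)} it suffices to check $\Phi(q^k)(x)=T^k(x)$ for $x\in\operatorname{Re}H$. Evaluating Theorem~\ref{thm:projection-commutation} on $\operatorname{Re}H$, where $\circledcirc$ reduces to ordinary composition (Lemma~\ref{thm:vanishing_associator}(2)) and $P_{z_i}\odot\lambda_i$ reduces to $\lambda_iP_{z_i}$, yields $T(P_{z_i}x)=\lambda_iP_{z_i}x$. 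Thus each vector $P_{z_i}x$ is a genuine eigenvector of the $\mathbb{R}$-linear map $T$ for the real eigenvalue $\lambda_i$, so $T^{m}(P_{z_i}x)=\lambda_i^{m}P_{z_i}x$. Combining this with $Tx=\sum_i\lambda_iP_{z_i}x$ and iterating gives $T^k x=\sum_i\lambda_i^kP_{z_i}x=\Phi(q^k)(x)$ on $\operatorname{Re}H$, hence $\Phi(q^k)=T^{\circledcirc k}$.

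Properties (2), (4), (5) are comparatively direct. Additivity in (2) is immediate from the definition, while the identity $\operatorname{Re}(\Phi(f)\odot p-\Phi(fp))=0$ reduces, term by term, to the vanishing of $\operatorname{Re}[P_{z_i},f(\lambda_i),p]$, i.e.\ to the fact that the right associator of the $\mathbb{O}$-bimodule $\mathscr{B}_{\mathcal{RO}}(H)$ has zero real part (a consequence of the tensor structure $M\cong\operatorname{Re}M\otimes_{\mathbb{R}}\mathbb{O}$ together with the purely imaginary nature of octonionic associators). For the involution (4), I would expand $\Psi(f)^{*}=\sum_i(f(\lambda_i)\odot P_{z_i})^{*}$ and apply the rule $(r\odot T)^{*}=T^{*}\odot\overline{r}$ from Theorem~\ref{thm:involutive_algebra} with $P_{z_i}^{*}=P_{z_i}$, obtaining $\sum_iP_{z_i}\odot\overline{f(\lambda_i)}=\Phi(f^{*})$; isometry of $*$ justifies interchange with the series. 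For the norm bounds (5), the real case follows from the Pythagoras computation $\|\Phi(f)x\|^2=\sum_i|f(\lambda_i)|^2|\langle z_i,x\rangle|^2\le\|f\|^2\|x\|^2$, and the general case from writing $f=\sum_{k=0}^7 f_k e_k$ with real-valued components $f_k$, so that $\Phi(f)=\sum_{k=0}^7\Phi(f_k)\odot e_k$, whence $\|T\odot e_k\|=\|T\|$ (Theorem~\ref{thm:B-O-Module}) and $\|f_k\|\le\|f\|$ produce the factor $8$.

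The main obstacle is the \emph{real-part multiplicative property} (3), where the non-associativity of $\circledcirc$ is genuinely felt. The plan is to expand $\Phi(f)\circledcirc\Phi(g)=\sum_{i,j}f(\lambda_i)\,\big(P_{z_i}\circledcirc(P_{z_j}\odot g(\lambda_j))\big)$, using that $f(\lambda_i)\in\mathbb{R}$ is central. Expressing the right scalar action as a regular composition with a left-multiplication operator $L_{g(\lambda_j)}$ via Lemma~\ref{lem:Rp prop 2} and then applying $\operatorname{Re}$, I would invoke \eqref{eq:re [fgh]=0} to rebracket freely under the real part, reducing each summand to $\operatorname{Re}\big((P_{z_i}\circledcirc P_{z_j})\circledcirc L_{g(\lambda_j)}\big)$. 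Orthogonality $P_{z_i}\circledcirc P_{z_j}=\delta_{ij}P_{z_i}$ then annihilates the off-diagonal terms, while idempotence collapses the diagonal ones to $\operatorname{Re}(P_{z_i}\odot g(\lambda_i))$, giving $\operatorname{Re}(\Phi(f)\circledcirc\Phi(g))=\sum_i f(\lambda_i)\operatorname{Re}(P_{z_i}\odot g(\lambda_i))=\operatorname{Re}\Phi(fg)$. The delicate points to control are the legitimacy of rebracketing inside $\operatorname{Re}$ for infinite sums (handled by boundedness of $\operatorname{Re}$ and norm convergence) and the precise bookkeeping of left versus right scalar actions in Lemma~\ref{lem:Rp prop 2}; once these are in place the identity drops out, and the statement for $\Psi$ follows by the symmetric argument with the roles of left and right multiplication interchanged.
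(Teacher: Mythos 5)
Your proposal is correct and follows essentially the same route as the paper's proof: iteration/induction via $T\circledcirc P_{z_i}=P_{z_i}\odot\lambda_i$ and agreement on $\operatorname{Re}H$ for (1), term-by-term vanishing of $\operatorname{Re}[P_{z_i},f(\lambda_i),p]$ in the bimodule $\mathscr{B}_{\mathcal{RO}}(H)$ for (2), the rebracketing $\operatorname{Re}\big(P_{z_i}\circledcirc(P_{z_j}\circledcirc L_{g(\lambda_j)})\big)=\operatorname{Re}\big((P_{z_i}\circledcirc P_{z_j})\circledcirc L_{g(\lambda_j)}\big)$ combined with projection orthogonality for (3), the rule $(r\odot T)^{*}=T^{*}\odot\overline{r}$ with $P_{z_i}^{*}=P_{z_i}$ for (4), and Parseval plus the decomposition $f=\sum_{k}f_{(k)}e_k$ for (5). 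The one detail the paper treats more carefully is the kernel contribution in (3): it first assumes $f(0)=g(0)=0$ and then reduces the general case by writing $f=a+f_1$, $g=b+g_1$ with $\Phi(p)=\mathrm{Id}\odot p$, whereas your double sum silently absorbs the term $P_0\odot f(0)$, whose manipulation (e.g.\ $P_0\circledcirc P_{z_j}=0$, $P_0\circledcirc P_0=P_0$) deserves the separate justification the paper gives.
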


\begin{proof}\par\noindent
 \text{(1)} Proceed by induction. For $k=0,1$, $\Phi(1) = \text{Id}$ by Theorem  \ref{thm:parseval}, and $\Phi(q) = T$ by  Theorem \ref{thm:hilbert-schmidt}. Assume $\Phi(q^k) = T^{\circledcirc  k}$ for some $k\geqslant 1$. For $x \in \operatorname{Re} H$, by the definition of $T^{\circledcirc k}$ and the induction hypothesis,
\[
T^{\circledcirc (k+1)}(x) = T(T^k(x)) =T(T^{\circledcirc k}(x))= T\left( \Phi(q^k)(x) \right) = T\left( \sum_{i=1}^\infty \lambda_i^k P_{z_i}(x) \right).
\]
In view of   \eqref{eq:projection_orthogonality}, we have
\[
T\left( \sum_i \lambda_i^k P_{z_i}(x) \right) = \sum_i \lambda_i^k T(P_{z_i}(x)) = \sum_i \lambda_i^k (T \circledcirc P_{z_i})(x) = \sum_i \lambda_i^{k+1} P_{z_i}(x) = \Phi(q^{k+1})(x),
\]
where we used Theorem \ref{thm:projection-commutation} {to deduce} $T \circledcirc  P_{z_i} = \lambda_i P_{z_i}$. Since both sides are para-linear and agree on $\operatorname{Re} H$, they are equal by Lemma \ref{cor: A_p(x,f)=0 and f(px)=pf(x)}(2).

\text{(2)} Additivity is immediate from linearity of the series. For scalar homogeneity,
\begin{eqnarray*}
	\Phi(f  p) &=& P_0\odot(f(0)p)+\sum_i P_{z_i} \odot \left( f(\lambda_i) p \right) \\
	&=& (P_0\odot f(0))\odot p-\left[ P_{0}, f(0), p \right]_{\mathscr{B}_{\mathcal{RO}}(H)}+\sum_i \left( P_{z_i} \odot f(\lambda_i) \right) \odot p - \left[ P_{z_i}, f(\lambda_i), p \right]_{\mathscr{B}_{\mathcal{RO}}(H)}\\
	&=&\Phi(f)\odot p-\left[ P_{0}, f(0), p \right]_{\mathscr{B}_{\mathcal{RO}}(H)}-\sum_i  \left[ P_{z_i}, f(\lambda_i), p \right]_{\mathscr{B}_{\mathcal{RO}}(H)}.
\end{eqnarray*}
 Thus by the continuity of the real part operator on ${\mathscr{B}_{\mathcal{RO}}(H)}$,
\[
\operatorname{Re} \left( \Phi(f) \odot p - \Phi(f  p) \right) =   \sum_i\operatorname{Re} \left[ P_{z_i}, f(\lambda_i), p \right]_{\mathscr{B}_{\mathcal{RO}}(H)}  = 0.
\]
	Here we use the fact that ${\mathscr{B}_{\mathcal{RO}}(H)}$ is an $\O$-bimodule (Theorem \ref{thm:para_bimodule}) which yields the real part of the associators vanish.

 \textbf{(3)} Let \(f\in \operatorname{Re}\mathcal{R}\) and \(g\in \mathcal{R}\). Then \(f(\lambda_{i})\in \mathbb{R}\) for all \(i\). Assume first that \(f(0)=g(0)=0\). Using Lemma~\ref{lem:Rp prop 2} and the associator identity \eqref{eq:re [fgh]=0}, we obtain
\begin{align*}
\operatorname{Re}\!\big(\Phi(f)\circledcirc \Phi(g)\big)
&= \operatorname{Re}\!\left(\sum_{i,j} f(\lambda_{i}) \,\Big(P_{z_{i}} \circledcirc \big(P_{z_{j}}\odot g(\lambda_{j})\big)\Big)\right) \\
&= \sum_{i,j} f(\lambda_{i})\, \operatorname{Re}\!\left(P_{z_{i}}\circledcirc \big(P_{z_{j}}\circledcirc L_{g(\lambda_{j})}\big)\right) \\
&= \sum_{i,j} f(\lambda_{i})\, \operatorname{Re}\!\left(\big(P_{z_{i}}\circledcirc P_{z_{j}}\big)\circledcirc L_{g(\lambda_{j})}\right).
\end{align*}
Applying \eqref{eq:projection_orthogonality} and Lemma~\ref{lem:Rp prop 2} yields
\[
\operatorname{Re}\!\big(\Phi(f)\circledcirc \Phi(g)\big)
= \sum_{i} f(\lambda_{i})\, \operatorname{Re}\!\left(P_{z_{i}}\circledcirc L_{g(\lambda_{i})}\right)
= \operatorname{Re}\!\left(\sum_{i} P_{z_{i}}\odot \big(f(\lambda_{i})g(\lambda_{i})\big)\right)
= \operatorname{Re}\,\Phi(fg).
\]
{{For general \(f,g\), write \(f=a+f_{1}\) and \(g=b+g_{1}\) with \(a\in \mathbb{R}\), \(b\in \mathbb{O}\), and \(f_{1}(0)=g_{1}(0)=0\). Using the right para-linearity of \(\Phi\) (Theorem~\ref{thm:functional_calculus_properties}\,(2)) and the equality \(\Phi(p)=\mathrm{Id}\odot p\) for \(p\in \mathbb{O}\), we reduce to the previous case and conclude
\[
\operatorname{Re}\,\Phi(fg)=\operatorname{Re}\!\big(\Phi(f)\circledcirc \Phi(g)\big).
\]
}}

\text{(4)} By Theorem \ref{thm:involutive_algebra}  and $\lambda_i \in \mathbb{R}$, {we deduce}
\[
\Phi(f^*) = P_0\odot \overline{f(0)}+\sum_i P_{z_i} \odot \overline{f(\lambda_i)} =(f(0)\odot P_0)^*+ \sum_i \left( f(\lambda_i) \odot P_{z_i} \right)^* = \Psi(f)^*.
\]

\text{(5)} For $f \in \operatorname{Re} \mathcal{R}$, Parseval's Theorem (Theorem  \ref{thm:parseval}) gives
\begin{eqnarray*}
	\|\Phi(f)(x)\|^2 &=& \left\|\sum_{\alpha\in \Lambda} z_{\alpha} \langle z_{\alpha}, x \rangle f(0) + \sum_{i\in \mathcal{N}} z_i \langle z_i, x \rangle f(\lambda_i) \right\|^2 \\
	&=& \sum_{\alpha\in \Lambda} |\langle z_{\alpha}, x \rangle|^2 |f(0)|^2 +\sum_{i\in \mathcal{N}} |\langle z_i, x \rangle|^2 |f(\lambda_i)|^2 \\
	&\leqslant& \|f\|^2 \|x\|^2.
	\end{eqnarray*}
Thus $\|\Phi(f)\| \leqslant  \|f\|$.

For general $f$, decompose $f = \sum_{k=0}^7 f_{(k)}  e_k$ with $f_{(k)} \in \operatorname{Re} \mathcal{R}$. By Theorem \ref{thm:B-O-Module},
\[
\|\Phi(f)\| \leqslant \sum_{k=0}^7 \|\Phi(f_{(k)}) \odot e_k\| \leqslant \sum_{k=0}^7 |e_k| \,  \|\Phi(f_{(k)})\| \leqslant  8 \|f\|.
\]
\end{proof}

\subsection{Independence of Spectral Decomposition}
\label{subsec:independence}

The functional calculus is independent of the choice of spectral decomposition.

\begin{thm}
\label{thm:independence}
$\Phi(f)$ is independent of the orthonormal basis $\{z_i\}$ in the spectral decomposition of $T$.
\end{thm}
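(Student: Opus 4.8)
The plan is to reorganize the defining series of $\Phi(f)$ by grouping the eigenvectors according to their eigenvalue, thereby exhibiting $\Phi(f)$ as a sum over the intrinsic spectrum $\sigma(T)$ of operators depending only on $T$ and $f$. Concretely, writing $\lambda_\alpha=0$ for $\alpha\in\Lambda$ and using that $S\mapsto S\odot p$ is additive (Theorem~\ref{thm:para_bimodule}) and norm-bounded (Theorem~\ref{thm:B-O-Module}), I would first rewrite
\[
\Phi(f)=\sum_{i\in\mathcal N\cup\Lambda}P_{z_i}\odot f(\lambda_i)=\sum_{\mu\in\sigma(T)}\Big(\sum_{\lambda_i=\mu}P_{z_i}\Big)\odot f(\mu)=\sum_{\mu\in\sigma(T)}P_\mu\odot f(\mu),
\]
where $P_\mu:=\sum_{\lambda_i=\mu}P_{z_i}$ and the regrouping is legitimate by the norm convergence established in Definition~\ref{def:functional_calculi}. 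Since the set of eigenvalues $\sigma(T)$ and the values $f(\mu)$ are manifestly independent of the chosen basis, it suffices to prove that each partial eigenprojection $P_\mu$ depends only on $T$ and $\mu$, not on the particular weak associative orthonormal basis $\{z_i\}$.

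To identify $P_\mu$ intrinsically, I would pass to the real Hilbert space $(H,\langle\cdot,\cdot\rangle_{\mathbb R})$, in which $T$ is a compact self-adjoint $\mathbb R$-linear operator (Remark~\ref{rem:T*=T*R}). The key step is to show that each $P_{z_i}$ is precisely the real-orthogonal projection onto the closed subspace $z_i\mathbb O$: a direct computation using $\|z_i\|=1$ and $B_p(z_i,z_i)=0$ gives $P_{z_i}(z_ip)=z_i\langle z_i,z_ip\rangle=z_ip$ for all $p\in\mathbb O$, while for $w\perp_{\mathbb R}z_i\mathbb O$ the identity $\langle z_ie_k,w\rangle_{\mathbb R}=-\langle z_i,we_k\rangle_{\mathbb R}$ (Lemma~\ref{lem:inner_product}(2)), combined with the fact that $B_{e_k}(z_i,w)$ is purely imaginary (Lemma~\ref{lem:inner_product}(1)), forces every real component of $\langle z_i,w\rangle$ to vanish, whence $P_{z_i}(w)=0$. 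Together with the mutual orthogonality $P_{z_j}P_{z_i}=\delta_{ij}P_{z_i}$ recorded in \eqref{eq:projection_orthogonality}, this shows that $\{P_{z_i}\}$ is a family of pairwise orthogonal real projections with $T=\sum_i\lambda_iP_{z_i}$ as $\mathbb R$-linear operators (Theorem~\ref{thm:hilbert-schmidt}(3)). Consequently $P_\mu=\sum_{\lambda_i=\mu}P_{z_i}$ is the real-orthogonal projection onto $\ker(T-\mu\,\mathrm{Id})$.

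I would then invoke the uniqueness of spectral projections for compact self-adjoint operators on a real Hilbert space: the orthogonal projection onto $\ker(T-\mu\,\mathrm{Id})$ is completely determined by $T$ and $\mu$, independently of any diagonalizing basis. Hence each summand $P_\mu\odot f(\mu)$, and therefore $\Phi(f)=\sum_{\mu\in\sigma(T)}P_\mu\odot f(\mu)$, is independent of the choice of $\{z_i\}$; the analogous conclusion for $\Psi$ follows verbatim with $\odot$ applied on the left. I expect the main obstacle to lie in the second paragraph, namely rigorously recognizing $P_{z_i}$ as a genuine (ordinary, not merely $\circledcirc$-) orthogonal projection in the real picture and verifying that the regrouping of the series is compatible with the bimodule operation $\odot$ and its norm convergence; once $P_\mu$ is identified as an intrinsic spectral projection, basis-independence is immediate.
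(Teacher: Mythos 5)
Your proof is correct, but it takes a genuinely different route from the paper. The paper argues by approximation: for $f\in\operatorname{Re}\mathcal{R}$ it approximates $f$ uniformly by real polynomials $f_n$, uses the power-preservation property $\Phi(q^k)=T^{\circledcirc k}$ (Theorem~\ref{thm:functional_calculus_properties}(1)) to see that $\Phi(f_n)$ depends only on $T$, and then passes to the limit via the bound $\|\Phi(f)-\Phi(g)\|\leqslant 8\|f-g\|$; general $f$ is handled by the decomposition $f=\sum_k f_{(k)}e_k$ and right para-linearity, exactly as in your last step. You instead regroup the series by eigenvalue and identify each block $P_\mu=\sum_{\lambda_i=\mu}P_{z_i}$ intrinsically as the $\langle\cdot,\cdot\rangle_{\mathbb R}$-orthogonal projection onto the real eigenspace $\ker(T-\mu\,\mathrm{Id})$, which is basis-free. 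Your verification that $P_{z_i}$ is a genuine (ordinary) orthogonal projection onto $z_i\mathbb{O}$ is sound: $P_{z_i}(z_ip)=z_ip$ follows from $B_p(z_i,z_i)=0$, and the vanishing of $P_{z_i}$ on $(z_i\mathbb O)^{\perp_{\mathbb R}}$ follows from Lemma~\ref{lem:inner_product}(1)--(2) as you indicate; combined with Theorem~\ref{thm:hilbert-schmidt}(3) read as an $\mathbb R$-linear identity, this pins down $P_\mu$ uniquely. What each approach buys: the paper's argument stays entirely inside the $\circledcirc$-algebra and needs no structural analysis of the projections, but it implicitly requires $f$ to be a uniform limit of polynomials on $\sigma(T)$ (i.e.\ continuous there), whereas your argument works verbatim for arbitrary bounded $f\in\mathcal{R}$ and makes the basis-independence transparent at the level of spectral subspaces. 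The only points you should make explicit are (i) that the regrouping is justified by unconditional (orthogonal-series) convergence of $\sum_i z_i\langle z_i,x\rangle f(\lambda_i)$ together with strong continuity and additivity of $S\mapsto S\odot p$, and (ii) that for $\mu=0$ the block $P_0=\sum_{\alpha\in\Lambda}P_{z_\alpha}$ may be an infinite sum, so one should note that strong limits of para-linear operators are para-linear (the pointwise analogue of Theorem~\ref{thm:B(H)complete}) before applying $\odot f(0)$.
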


\begin{proof}
For $f \in \operatorname{Re} \mathcal{R}$, approximate uniformly by polynomials $f_n(q) = \sum_{k=0}^n q^k a_k$ with $a_k \in \mathbb{R}$. By Theorem \ref{thm:functional_calculus_properties}(1), $\Phi(f_n) = \sum_{k=0}^n a_k T^{\circledcirc k}$ depends only on $T$, not the basis. Since $\Phi$ is continuous ($\|\Phi(f) - \Phi(g)\| \leqslant  8 \|f - g\|$),
\[
\Phi(f) = \lim_{n \to \infty} \Phi(f_n)
\]
is independent of the basis.

 For general $f\in  \mathcal{R}$, decompose $f = \sum_{k=0}^7 f_{(k)}  e_k$ with $f_{(k)} \in \operatorname{Re} \mathcal{R}$. By right para-linearity,
\[
\Phi(f) = \sum_{k=0}^7 \Phi(f_{(k)}) \odot e_k,
\]
and each $\Phi(f_{(k)})$ is independent of the basis.
\end{proof}

\subsection{Remark}
\label{subsec:discussion}

The functional calculi $\Phi$ and $\Psi$ exhibit intrinsic non-associative features, in particular:
\begin{itemize}
    \item \textbf{Homomorphism limitation}: Multiplicativity holds only for the real part (Theorem \ref{thm:functional_calculus_properties}(3));
    \item \textbf{Left-right distinction}: $\Phi \neq \Psi$ but $\Phi(f^*) = \Psi(f)^*$ (Theorem \ref{thm:functional_calculus_properties}(4));
    \item \textbf{Norm deviation}: The factor $8$ in $\|\Phi(f)\|$ quantifies octonionic non-associativity.
\end{itemize}
This framework provides a consistent spectral-based functional calculus, laying foundations for extensions to broader operator classes in non-associative functional analysis.

\section{Concluding Remarks}
\label{sec:conclusion}

{In this section we summarize the main contributions of this paper.} This work resolves fundamental challenges in non-associative functional analysis by establishing the first complete spectral theory for self-adjoint operators on octonionic Hilbert spaces, a problem {which has been} open since Goldstine and Horwitz's foundational 1964 work. The core difficulty stemmed from the incompatibility of associative operator theory with non-associativity of octonions, which obstructed spectral decomposition, adjoint operators, and {a consistent definition of a} functional calculus. Our {approach is based} on \textit{para-linearity}, a geometrically motivated algebraic framework that respects octonionic structure while enabling robust operator theory. The principal contributions are:

\subsection*{Key Innovations}
\begin{enumerate}[leftmargin=*,align=left]
\item[\bfseries (1)] \textit{Para-linear formalism.} We introduce para-linearity (Definition       \ref{def:para_linear}) as the definitive non-associative analog of linearity, characterized by the vanishing real part of the second associator:
\[
\operatorname{Re}(B_p(T,x)) = 0, \quad B_p(T,x) := T(x)p - T(xp).
\]
This recovers the essential analytic {results, among which} Riesz representation (Theorem \ref{thm:riesz}), Parseval's theorem (Theorem \ref{thm:parseval}), and Hahn-Banach extension  Theorem (see \cite{huo2025BLMSHB}).

\item[\bfseries (2)] \textit{Involutive operator algebra.} We construct the first octonionic involutive Banach algebra $\mathscr{B}_{\mathcal{RO}}(H)$ of bounded para-linear operators (Theorems \ref{thm:norm_estimate} and 	\ref{thm:involutive_algebra}) via {the following tools:}
\begin{itemize}
\item \textit{Regular composition} $\circledcirc$ (Definition 	\ref{def:regular_composition}), defined by $$\operatorname{Re}((f \circledcirc g)(x)) = \operatorname{Re}(f(g(x))),$$ which resolves associativity failures.
\item A corrected adjoint $T^*$ (Definition 	\ref{def:adjoint_operator}) satisfying $$\langle x, T^*y \rangle = \langle Tx, y \rangle + [x,y,T],$$ where the associator $[x,y,T]$ quantifies non-associative deviation.
\end{itemize}
The algebra has norm $\|T\|' = 8\|T\|$ and scalar multiplications $p \odot T$, $T \odot p$ compatible with octonionic bimodule structure.

\item[\bfseries (3)] \textit{Geometric spectral theory.} We prove an \textit{octonionic polarization identity} (Theorem 	\ref{thm:polarization}):
\[
\langle Tx, y \rangle = \frac{1}{56} \sum_{\substack{i,j=1 \\ i \neq j}}^7 (2A_{ij} + B_{ij} + C_{ij})(x,y) - \frac{1}{98} \sum_{i,j=1}^7 A_{ij}(x,y) - \frac{1}{2} m(x,y)
\]
for $x,y \in \operatorname{Re} H$, demonstrating that sesquilinear forms are determined by quadratic forms restricted to the \textit{slice cone}:
\[
\mathbb{C}(H) = \bigcup_{J \in \mathbb{S}} (\operatorname{Re} H + J \operatorname{Re} H), \quad \mathbb{S} = \{J \in \operatorname{Im} \mathbb{O} : |J| = 1\}.
\]
This yields a geometric characterization of self-adjointness (Theorem \ref{thm:slice_cone_char}): $T = T^*$ if and only if $\langle Tx, x \rangle \in \mathbb{R}$ for all $x \in \mathbb{C}(H)$.

\item[\bfseries (4)] \textit{Spectral decomposition.} For compact, self-adjoint, para-linear operators with \textit{standard strong eigenvalues} (Definition \ref{def:ssEngen}), we establish a spectral theorem (Theorem \ref{thm:hilbert-schmidt}):
\[
T = \sum_{i \in \mathcal{N}} \lambda_i P_{z_i}, \quad P_{z_i}(x) = z_i \langle z_i, x \rangle,
\]
where $\lambda_i \in \mathbb{R}$ are eigenvalues, $\{z_i\} \subset \mathbb{C}(H)$ are orthonormal strong eigenvectors, and convergence is norm-topological. Eigenvectors exhibit \textit{weak associativity}: $B_p(z_i, z_j) = 0$ for all $p \in \mathbb{O}$ (Theorem \ref{thm:eigen-orthogonality}).

\item[\bfseries (5)] \textit{Functional calculus.} We develop left and right functional calculi $\Psi$, $\Phi$ (Theorem \ref{thm:functional_calculus_properties}) for octonion-valued functions $f \in \mathcal{R}$:
\[
\Phi(f) = \sum_{i\in \mathcal{N}\cup \Lambda } P_{z_i} \odot f(\lambda_i), \quad \Psi(f) = \sum_{i\in \mathcal{N}\cup \Lambda } f(\lambda_i) \odot P_{z_i},
\]
satisfying
\begin{itemize}
\item Power preservation: $\Phi(q^k) = T^{\circledcirc k}$.
\item Para-linearity: $\re \Phi(f p)-\Phi(f)\odot p=0$.
\item Real-multiplicativity: $\operatorname{Re} \Phi(f  g) = \operatorname{Re} (\Phi(f) \circledcirc \Phi(g))$ for $f \in \operatorname{Re} \mathcal{R}$.
\item Norm bound: $\|\Phi(f)\| \leqslant  8 \|f\|_\infty$.
\end{itemize}
The calculi are decomposition-independent (Theorem \ref{thm:independence}).
\end{enumerate}

In the forthcoming article \cite{hrs2025Omoore}, we will apply our para-linear operator framework to the spectral theory of the Albert algebra, the 27-dimensional exceptional simple Jordan algebra,  opening new avenues in exceptional geometry, representation theory, and Jordan-based approaches to quantum physics.


				

			\end{document}